\definecolor{lblue}{HTML}{908cc0}
\definecolor{mblue}{HTML}{519cc8}
\definecolor{hblue}{HTML}{1d5996}
\definecolor{lred}{HTML}{cb5501}
\definecolor{mred}{HTML}{f1885b}
\definecolor{hred}{HTML}{b3001e}
\definecolor{ttred}{HTML}{ca3542}
\newcommand{\deuc}{d_{\mathsf{euc}}}
\newcommand{\diam}{\mathbf{diam}}
\newcommand{\indep}{\mathrel{\perp\!\!\!\!\perp}}
\theoremstyle{definition}
\newtheorem{assumption}{Assumption}
\crefname{assumption}{Assumption}{Assumptions}
\theoremstyle{proposition}
\newtheorem*{theorem*}{Theorem}
\crefname{appsec}{Appendix}{Appendices}
\crefname{claim}{Claim}{Claims}
\crefname{lemma}{Lemma}{Lemmas}
\Crefname{lemma}{Lemma}{Lemmas}
\newcommand{\sol}{x_{\star}}
\newcommand{\lip}{\mathsf{L}}
\renewcommand{\cite}{\citep}
\newcommand{\strictind}{\mathcal{I}}
\newcommand{\slow}{\mathsf{slow}}
\newcommand{\rhoslow}{\overline{\rho}}
\newcommand{\rhofast}{\rho}
\newcommand{\fast}{\mathsf{fast}}
\newcommand{\condnum}{\kappa}
\newcommand{\condest}{\widehat{\kappa}}
\pgfplotsset{compat=1.14}
\title{Nonlinear tomographic reconstruction via nonsmooth optimization}
\author[1]{Vasileios Charisopoulos\thanks{\email{vchariso@uchicago.edu}}}
\author[1,2,3]{Rebecca Willett\thanks{\email{willett@uchicago.edu}}}
\affil[1]{Data Science Institute, University of Chicago}
\affil[2]{Department of Computer Science, University of Chicago}
\affil[3]{Computational and Applied Mathematics, Department of Statistics, University of Chicago}
\begin{document}

\maketitle

\begin{abstract}
	We study iterative signal reconstruction in computed tomography (CT), wherein
	measurements are produced by a linear transformation of the unknown signal
	followed by an exponential nonlinear map. Approaches based on pre-processing the
	data with a log transform and then solving the resulting linear inverse problem are
	tempting since they are amenable to convex optimization methods; however, such
	methods perform poorly when the underlying image has high dynamic range, as in X-ray
	imaging of tissue with embedded metal. We show that a suitably initialized
	subgradient method applied to a natural nonsmooth, nonconvex loss function
	produces iterates that converge to the unknown signal of interest at a
	geometric rate under the statistical model proposed by~\citet{FVW+23}.
	Our recovery program enjoys improved conditioning compared to the formulation
	proposed by the latter work, enabling faster iterative reconstruction from substantially fewer samples.
\end{abstract}

\section{Introduction}

A computed tomography (CT) scan is an imaging procedure in which
a motorized X-ray source rotates rapidly around an object of interest, emitting
narrow beams of X-rays at the object at varying angles. Detectors placed on
the opposite side of the X-ray source measure the transmitted radiation, and
the measurements are then combined to synthesize cross-sectional images of the
object. In the context of medical imaging, for instance, the resulting images
can be used for diagnostic purposes by clinicians; beyond the medical setting,
CT imaging is also widely used in security settings~\cite{TSA},
non-destructive material evaluation~\cite{NDE},
archaeology~\cite{CB08}, and more.
We refer the reader to~\citet{Buzug11} for an overview of modern CT technologies.

Since the final images in CT scans are constructed from X-ray
measurements at different angles, the number of collected measurements can
heavily influence the quality of the final reconstruction. Indeed, reconstruction
artifacts in medical CT scans can obstruct anatomical features that are important for therapy
planning. However, large doses of radiation can have adverse effects on living
tissue, highlighting the need for sample-efficient imaging techniques that use
few measurements. At the same time, rapid reconstruction methods can increase
the efficiency of radiology labs and reduce time-to-diagnosis.

Several commercial CT scanners rely on the following simple physical model~\citep{Natterer2001}: let $u \in \Rbb^2$ denote a spatial index of the tissue being imaged,
and suppose that the X-ray attenuation coefficient is given by a (unknown) function
$\mathscr{I}: \Rbb^2 \to \Rbb_+$. If the initial beam intensity is $I_{0}$,
the measured beam intensity across line $\ell$, $y_{\ell}$, satisfies
\begin{equation}
	y_{\ell} = I_{0} \expfun{-\int_{\ell} \mathscr{I}(u) \dd{u}}.
	\label{eq:physical-model}
\end{equation}
The goal of computed tomography is to recover the attenuation coefficients from
measurements of the form~\eqref{eq:physical-model}. In practice, we have access
to a limited number of measurements corresponding to a finite set of lines, indexed
by a set of known angles $\set{\theta_1, \dots, \theta_m}$. In this case, we write
\begin{equation}
	y_{i} = I_0 \expfun{-\ip{a_i, \sol}}, \quad \text{for $i = 1, \dots, m$,}
	\label{eq:physical-model-discretized}
\end{equation}
where each $a_i \in \Rbb^{d}$, $i = 1, \dots, m$ is a set of known coefficients derived from the Radon transform~\cite{Radon1917}
at angle $\theta_i$
and $\sol \in \Rbb^d$ is a vector representation of the unknown image\footnote{While the unknown signal
	is typically 2D or 3D, we work with the vectorized representation for simplicity.};
i.e., the $j^{\text{th}}$ element of $\sol$ is $\mathscr{I}(u_{j})$, where $u_{j}$ is a discrete pixel location.
To recover $\sol$, commercial CT software applies a logarithmic preprocessing step to the $y_i$~\cite{Fu2017},
resulting in a set of linear equations:
\begin{equation}
	\ip{a_i, \sol} = -\hat{y}_i, \;\; i = 1, \dots, m, \quad \text{where} \quad
	\hat{y}_i = -\log\left(\frac{y_i}{I_0}\right).
	\label{eq:physical-model-linearized}
\end{equation}
The linear inverse problem in~\eqref{eq:physical-model-linearized} is then solved
using traditional methods, most commonly the so-called \emph{filtered back-projection} (FBP)
method. However, the logarithmic transform is numerically unstable as $y_i \to 0$,
which is often the case for X-rays passing through high-density materials, and is
known to lead to reconstruction artifacts~\cite{GMJ+16}. To understand why this would impede recovery,
consider the behavior of \eqref{eq:physical-model-linearized}
as $\norm{\sol}$ grows; the measurements $y_i \to 0$ and the logarithmically transformed $\hat{y}_i$ in~\eqref{eq:physical-model-linearized} approaches $-\infty$.
Therefore, the right-hand side of the linear system in~\eqref{eq:physical-model-linearized}
is sensitive to small perturbations in $y_i$ (e.g., induced by numerical round-off errors). Since the Radon transform yields an
ill-conditioned linear system, this means that we need many more samples to recover
$\sol$ in a stable manner when the norm $\norm{\sol}$ is larger. This motivates researchers and practitioners
to consider iterative reconstruction methods that operate directly on
\eqref{eq:physical-model-discretized}.

Recently,~\citet{FVW+23} studied iterative reconstruction from CT measurements
under a Gaussian measurement model, intended to capture randomness in ray
directions:
\begin{assumption}[Measurement model]
	\label{assm:measurement-model}
	The measurements satisfy\footnote{
		Since $\ip{a_i, \sol}$ represents
		the line integral in the original model~\eqref{eq:physical-model-discretized} and is
		nonnegative by construction, the proposed model in~\eqref{eq:physical-model-fvw}
		enforces nonnegativity by taking the positive part of the inner product.
	}
	\begin{equation}
		y_i = 1 - \expfun{-\ip{a_i, \sol}_+}, \;\; i = 1, \dots, m, \quad
		\text{for} \quad
		[t]_+ := \max(t, 0),
		\label{eq:physical-model-fvw}
	\end{equation}
	where the sensing vectors are i.i.d.\ Gaussian: $a_{i} \iid \cN(0, I_{d})$, for $i = 1, \dots, m$.
\end{assumption}
Given~\cref{assm:measurement-model}, they propose to recover $\sol$ by optimizing the following nonlinear least-squares objective
using gradient descent:
\begin{align}
	\widehat{x} & = \argmin_{x \in \Rbb^d} \frac{1}{2m}
	\sum_{i = 1}^m \left(y_i - (1 - \expfun{-\ip{a_i, x}_+})\right)^2,
	\label{eq:nonlinear-least-squares}
\end{align}
While working with the measurements directly avoids numerical instability, the
optimization problem in~\eqref{eq:nonlinear-least-squares} is nonconvex;
consequently, iterative methods like gradient descent are not guaranteed to converge
to the solution $\sol$. Moreover, it is possible that the
conditioning of the problem in~\eqref{eq:nonlinear-least-squares} is such that
iterative methods are rendered impractical due to slow convergence.
Nevertheless, \citet{FVW+23} argue that \eqref{eq:nonlinear-least-squares}
can be solved to global optimality and provide a rigorous proof for
their claim under the Gaussian measurement model. Their main findings are
summarized below:
\begin{theorem}[{Informal; adapted from~\citep[Theorem 1]{FVW+23}}]
	\label{theorem:gd-informal}
	Let~\cref{assm:measurement-model} hold and suppose that the number of measurements $m$ satisfies
	$m \gtrsim \frac{e^{c \norm{\sol}}}{\norm{\sol}^2} \cdot d$. Then solving
	\eqref{eq:nonlinear-least-squares} using
	gradient descent with stepsize $\eta$ and careful initialization
	produces a sequence of iterates
	$\set{x_k}_{k \geq 1}$ satisfying
	\begin{equation}
		\norm{x_k - \sol}^2 \leq \left(1 - \eta e^{-5 \norm{\sol}}\right)^k \norm{\sol}^2,
	\end{equation}
	as long as $\eta \lesssim e^{-5 \norm{\sol}}$, with high probability over
	the choice of design vectors.
\end{theorem}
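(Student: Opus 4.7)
The plan is to combine a local ``one-point'' strong convexity guarantee for the population loss with a matching smoothness bound on $\nabla f$, and then run a standard gradient descent contraction argument starting from a carefully chosen initialization. First I would compute the gradient
\begin{equation*}
	\nabla f(x) = \frac{1}{m}\sum_{i = 1}^{m} \bigl(e^{-\ip{a_i, x}_+} - e^{-\ip{a_i, \sol}_+}\bigr)\, e^{-\ip{a_i, x}_+}\, \mathbf{1}\{\ip{a_i, x} > 0\}\, a_i,
\end{equation*}
and analyze its expectation $\overline{g}(x) := \mathbb{E}[\nabla f(x)]$. By rotational invariance of the Gaussian distribution, $\overline{g}(x)$ lies in the span of $x$ and $\sol$, so the relevant quantities reduce to two-dimensional Gaussian integrals in $z_x = \ip{a, x}$ and $z_\star = \ip{a, \sol}$. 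On the dominant region $\{z_x, z_\star > 0\}$, each summand of $\ip{\overline{g}(x), x - \sol}$ equals $(e^{-z_x} - e^{-z_\star})(z_x - z_\star)\, e^{-z_x} \ge 0$, and integrating against the Gaussian density yields restricted strong convexity $\ip{\overline{g}(x), x - \sol} \geq \mu \norm{x - \sol}^2$ on a ball of constant relative radius around $\sol$, with $\mu \asymp e^{-c\norm{\sol}}$ coming from the exponential weighting near $z_\star \approx \norm{\sol}$.

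Next I would establish a smoothness-type bound $\norm{\nabla f(x)} \leq L \norm{x - \sol}$ using that $t \mapsto 1 - e^{-[t]_+}$ is $1$-Lipschitz, so that each residual is at most $|\ip{a_i, x - \sol}|$; after the exponential truncation by $e^{-\ip{a_i, x}_+}$, the summands are subexponential and a Bernstein-type inequality produces a constant $L = O(1)$. To transfer these population-level guarantees uniformly over a ball of radius $\Theta(\norm{\sol})$ around $\sol$, I would combine a covering argument with subexponential concentration of the random sums $\frac{1}{m}\sum_i \psi(\ip{a_i, x}, \ip{a_i, \sol}) a_i$. Carefully tracking the exponential tails shows that the deviation from the population quantities is at most $\mu/2$ once $m \gtrsim e^{c\norm{\sol}} d / \norm{\sol}^2$, which delivers restricted strong convexity and smoothness for the empirical loss with the same orders.

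For the initialization, I would use a moment/spectral estimator of the form $x_0 \propto \frac{1}{m}\sum_i h(y_i)\, a_i$, where the scalar $h$ is calibrated so that $\mathbb{E}[x_0]$ aligns with $\sol$; concentration combined with a separate calibration of $\norm{\sol}$ places $x_0$ inside the basin of attraction, i.e., $\norm{x_0 - \sol} \leq c'\norm{\sol}$ for an absolute $c' < 1$. From here the descent argument is routine:
\begin{equation*}
	\norm{x_{k+1} - \sol}^2 = \norm{x_k - \sol}^2 - 2\eta \ip{\nabla f(x_k), x_k - \sol} + \eta^2 \norm{\nabla f(x_k)}^2 \leq \bigl(1 - 2\eta \mu + \eta^2 L^2\bigr)\norm{x_k - \sol}^2,
\end{equation*}
and choosing $\eta \asymp \mu / L^2 \asymp e^{-5\norm{\sol}}$ yields the advertised geometric rate with contraction factor $1 - \eta e^{-5\norm{\sol}}$.

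The main obstacle is quantifying the exponentially small local curvature with sharp enough constants to match the $e^{-5\norm{\sol}}$ factor in the statement: both the lower bound on $\mu$ (a Gaussian integral weighted by $e^{-2z}$ over a half-space shifted by $\norm{\sol}$) and the uniform concentration (whose worst-case deviations are driven by the tails of $\ip{a_i, \sol}$) must be executed with precisely the correct exponent, with all polynomial factors absorbed into the constant $c$ in $m \gtrsim e^{c\norm{\sol}} d / \norm{\sol}^2$. A secondary subtlety is that $f$ is non-differentiable on the hyperplanes $\{\ip{a_i, x} = 0\}$, but these have Gaussian measure zero and iterates avoid them with high probability, so the classical gradient calculus above suffices without invoking nonsmooth tools.
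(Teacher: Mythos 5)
\Cref{theorem:gd-informal} is not proven in this paper: it is an informal restatement of Theorem~1 of \citet{FVW+23}, quoted only to motivate the comparison, so there is no in-paper proof to measure your proposal against. Judged on its own terms, your plan follows the recipe that underlies the cited result --- one-point (restricted secant) curvature of the population gradient near $\sol$, an $O(1)$ restricted smoothness bound $\norm{\grad f(x)} \lesssim \norm{x - \sol}$, uniform concentration to transfer both to the empirical loss, and then the standard contraction recursion --- and your moment initializer $\frac{1}{m}\sum_i h(y_i) a_i$ with $h(y) = y$ is exactly the ``long gradient step at $x_0 = \bm{0}$'' that \citet{FVW+23} use, so in outline the approach is the right one.

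Two caveats. A minor one: your displayed gradient has the wrong sign; the correct expression is $\grad f(x) = -\frac{1}{m}\sum_{i=1}^m \bigl(e^{-\ip{a_i, x}_+} - e^{-\ip{a_i, \sol}_+}\bigr) e^{-\ip{a_i, x}_+} \1\set{\ip{a_i, x} > 0}\, a_i$, and with the sign you wrote, the summands of $\ip{\grad f(x), x - \sol}$ on $\set{z_x, z_\star > 0}$ equal $(e^{-z_x} - e^{-z_\star})(z_x - z_\star)e^{-z_x} \leq 0$, not $\geq 0$; the two slips cancel, but as written the positivity claim is inconsistent with your own formula. The substantive issue is that everything that makes the cited theorem nontrivial is deferred rather than argued: the population curvature constant must be pinned down to (essentially) $e^{-5\norm{\sol}}$ rather than a generic $e^{-c\norm{\sol}}$; the uniform deviation bound must be shown to cost only $m \gtrsim e^{c\norm{\sol}} d / \norm{\sol}^2$ samples even though the gradient summands have $\norm{\sol}$-dependent heavy tails and the curvature they must be compared against is itself exponentially small; and the single long gradient step from $\bm{0}$ must be shown to land inside the region where the one-point curvature holds. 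You flag these as ``obstacles,'' which is honest, but they constitute the entire content of the proof in \citet{FVW+23}; as it stands your proposal is a correct roadmap of that argument rather than a proof.
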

The result of~\cref{theorem:gd-informal} is positive and surprising: it
suggests that, despite nonconvexity, the optimization problem in~\eqref{eq:nonlinear-least-squares}
can be solved reliably and efficiently with a simple first-order method initialized
in a rather simple way: namely, by taking a ``long'' gradient descent step at $x_{0} = \bm{0}$
(see Theorem 1 in~\citet{FVW+23} for a precise statement). However, some important limitations
can be gleaned from~\cref{theorem:gd-informal}:
\begin{description}
	\item[(Sample complexity)] The number of samples, $m$, scales exponentially with the
	      norm of the unknown signal $\sol$, and this norm can be large in settings of
	      interest. This requirement is at odds with our
	      desire to minimize exposure to radiation in practical settings.
	\item[(Conditioning)] The ``condition number'' $\exp(5 \norm{\sol})$ of the optimization problem --- which
	      is a natural measure of difficulty governing the speed of
	      convergence of iterative methods --- also scales exponentially with the
	      norm of $\sol$. As a result, convergence is not guaranteed unless the
	      algorithm uses vanishingly small stepsizes. Consequently, the number
	      of iterations (and computation time) required for reconstruction can be
	      rather large.
\end{description}
A natural question is whether these limitations are essential or can be
sidestepped by choosing a different objective function or reconstruction method. In this
paper, we show that using a nonsmooth loss function --- namely, the least absolute deviation
($\ell_1$) penalty --- yields \textbf{exponential improvements in both sample complexity
	and problem conditioning}. Our main finding is the following:

\vspace{2pt}

\begin{center}
	\itshape
	\begin{minipage}{0.75\textwidth}
		Under the Gaussian measurement model, $\sol$ can be recovered from a \textbf{polynomial} number
		of measurements by solving a nonsmooth ($\ell_1$) recovery program, whose
		conditioning is also \textbf{polynomial} in $\norm{\sol}$.
	\end{minipage}
\end{center}

\vspace{2pt}

Solving the aforementioned recovery program can be accomplished with standard
first-order methods; the main method analyzed in our paper is the subgradient
method using a scaled Polyak step-size~\cite{Polyak69}.\
A quantitative comparison between the guarantees of~\citet{FVW+23} and ours can be
found in~\cref{table:comparison}; there,
``iteration complexity'' indicates the number of iterations needed for the estimation
error to fall below a target level $\varepsilon$ and ``sample complexity'' indicates
the minimum number of samples $m$ needed for the iteration complexity bounds
to be valid.

\begin{table}[h]
	\renewcommand{\arraystretch}{1.25}
	\centering
	\begin{tabular}{l l l} \toprule
		\textbf{Method}                                                                                 & \textbf{Iteration complexity} & \textbf{Sample complexity} \\ \midrule
		\texttt{PolyakSGM} (Alg.~\ref{alg:polyaksgm})                                                   &
		\multirow{2}{*}{$O\left(\norm{\sol}^6 \log\left(\frac{\norm{\sol}}{\varepsilon}\right)\right)$} &
		\multirow{2}{*}{$O(\norm{\sol}^4 \cdot d)$}                                                                                                                  \\
		\texttt{AdPolyakSGM} (Alg.~\ref{alg:adpolyak})                                                  &
		                                                                                                &
		\\ \midrule
		\texttt{GD}~\citep{FVW+23}                                                                      &
		$O\left(\exp(c_1 \norm{\sol}) \log\left(\frac{\norm{\sol}}{\varepsilon}\right)\right)$          &
		$O\left(\frac{\exp(c_2 \norm{\sol})}{\norm{\sol}^2} \cdot d\right)$                                                                                          \\ \bottomrule \\
	\end{tabular}
	\caption{Iteration and sample complexity of iterative reconstruction methods.
		The proposed methods achieve exponential gains in both metrics.
		For a formal statement of our results, see~\cref{theorem:main} (\cref{alg:polyaksgm}) and
		\cref{prop:adpolyak-termination} (\cref{alg:adpolyak}).}
	\label{table:comparison}
\end{table}

In the next section, we formalize the measurement model and loss function used
and provide a high-level overview of the proposed method and the key points of its
convergence analysis.

\subsection{Method overview}
\label{sec:method-overview}
In this section, we provide a high-level overview of our method. Working under~\cref{assm:measurement-model},
we propose optimizing the following nonsmooth ($\ell_1$) penalty:
\begin{equation}
	f(x) := \frac{1}{m} \sum_{i = 1}^m \abs{y_i - h_{i}(x)}, \quad
	h_{i}(x) := 1 - \expfun{-\ip{a_i, x}_+}.
	\label{eq:nonsmooth-loss}
\end{equation}
To optimize $f(x)$ in~\eqref{eq:nonsmooth-loss}, we will use the subgradient
method with Polyak stepsize, starting at $x_{0} = \bm{0}$ (see~\cref{alg:polyaksgm}
for a formal description of the method). Note that the optimal value $f_{\star} = f(\sol) = 0$ when the measurements
are not corrupted by noise.

\begin{algorithm}[h]
	\caption{\texttt{PolyakSGM}: Subgradient method with scaled Polyak stepsize}
	\begin{algorithmic}[1]
		\State \textbf{Input}: $x_0 \in \Rbb^{d}$, scaling $\eta \in (0, 1]$, budget $K \in \Nbb$, optimal value $f_{\star}$.
		\For{$k = 1, 2, \dots, K$}
		\State Set $x_{k+1} := x_k - \eta \cdot \frac{f(x_k) - f_{\star}}{\norm{v_k}^2} v_k$, where $v_k \in \partial f(x_k)$.
		\label{op:polyak-step}
		\EndFor
		\State \Return $x_{K}$.
	\end{algorithmic}
	\label{alg:polyaksgm}
\end{algorithm}
In~\cref{alg:polyaksgm}, $\partial f(x)$ denotes
the so-called \emph{Clarke} subdifferential~\cite{Clarke1975}, an appropriate generalization of the convex
subdifferential for arbitrary Lipschitz functions. Like its convex counterpart, the
Clarke subdifferential is set-valued mapping in general. The Clarke subdifferential
of the loss function in~\eqref{eq:nonsmooth-loss} can be calculated
with the help of the chain rule:
\begin{subequations}
	\begin{align}
		\partial f(x)               & =
		\frac{1}{m} \sum_{i = 1}^m \sign(y_i - h_i(x)) \cdot (-e^{-\ip{a_i, x}_+}) a_i \1\set{\ip{a_i, x} \geq 0}, \label{eq:subgradient} \\
		\text{where} \quad \sign(x) & := \begin{cases}
			                                 1,       & x > 0, \\
			                                 -1,      & x < 0, \\
			                                 [-1, 1], & x = 0,
		                                 \end{cases}, \quad \text{and} \quad
		\1\set{x \geq 0} := \begin{cases}
			                    1,     & x > 0  \\
			                    0,     & x < 0, \\
			                    [0, 1] & x = 0.
		                    \end{cases}
		\label{eq:indicator-sign-defn}
	\end{align}
\end{subequations}

\begin{remark}
	\label{remark:subgradient-choice}
	Our implementation uses $\sign(0) = 0$ and $\1\set{x \geq 0} = 1$ if $x = 0$ in~\eqref{eq:indicator-sign-defn}.
\end{remark}

Much of our analysis will focus on showing that the loss $f$ in~\eqref{eq:nonsmooth-loss}
is ``well-conditioned'' in a neighborhood of the solution $\sol$, a consequence of
the following two regularity properties:
\begin{description}
	\item[(\textbf{Lipschitz continuity})] There is $\lip > 0$ such that for any $x, \bar{x} \in \Rbb^d$, the loss $f$ satisfies
	      \begin{equation}
		      \abs{f(x) - f(\bar{x})} \leq \lip \norm{x - \bar{x}}.
		      \label{eq:lipschitz-continuity}
	      \end{equation}
	\item[(\textbf{Sharp growth})] There is $\mu > 0$ such that for any $x$ near $\sol$, the loss $f$ satisfies
	      \begin{equation}
		      f(x) - f(\sol) \geq \mu \norm{x - \sol}.
		      \label{eq:sharp-growth}
	      \end{equation}
\end{description}
It is well known (see, e.g.,~\citet{Polyak69,Goffin77}) that classical
subgradient methods converge linearly for \emph{convex} functions satisfying
\eqref{eq:lipschitz-continuity}--\eqref{eq:sharp-growth},
with rate
\begin{equation}
	\norm{x_{k} - \sol}^2 \leq \left(1 - \frac{1}{\condnum^2}\right)^k
	\norm{x_0 - \sol}^2, \quad
	\condnum := \frac{\lip}{\mu}.
	\label{eq:classical-polyak-rate}
\end{equation}
In~\eqref{eq:classical-polyak-rate}, the \textbf{condition number} $\condnum$ generalizes the classical notion of
conditioning from smooth optimization to nonsmooth programs. Unfortunately,
the loss in~\eqref{eq:nonsmooth-loss} is nonconvex; however,
we show that it satisfies a key ``aiming'' condition,
postulating that subgradients --- potential search directions for~\cref{alg:polyaksgm} --- point
to the direction of the solution $\sol$.
\begin{equation}
	(\texttt{Aiming}):
	\quad
	\min_{v \in \partial f(x)} \ip{v, x - \sol} \geq \mu \norm{x - \sol}, \;\;
	\text{for all $x \in \cB(\bm{0}; 3 \norm{\sol}) \setD \set{0}$}.
	\label{eq:aiming-intro-statement}
\end{equation}
The aiming inequality~\eqref{eq:aiming-intro-statement} serves
two purposes: first, it is a key stepping stone to establishing the sharp growth
property~\eqref{eq:sharp-growth} in a neighborhood of $\sol$. Moreover,
it ensures that~\cref{alg:polyaksgm} (with
sufficiently small scaling $\eta$) decreases the distance to $\sol$ monotonically:
\[
	\norm{x_{k+1} - \sol}^2 \leq \norm{x_{k} - \sol}^2 -
	\frac{\eta \mu f(x_k)}{\norm{v_k}^2} \norm{x_k - \sol} < \norm{x_k - \sol}^2.
\]
The focal point of our analysis is establishing~\eqref{eq:aiming-intro-statement}
and showing that the moduli of sharp growth and Lipschitz continuity, $\mu$ and $\lip$,
do not scale with the dimension $d$. Indeed, we prove that
\begin{equation}
	\mu = \Omega\left(\frac{1}{\norm{\sol}^2}\right) \;\;
	\text{and} \;\; \lip = O(1),
	\quad \text{as long as $m = \Omega(d \norm{\sol}^4)$},
\end{equation}
with high probability when the measurement vectors $a_i$ are drawn from a
standard Gaussian distribution; a precise statement can be found in~\cref{theorem:main}.
This establishes that the condition number of our problem satisfies
\[
	\kappa = O(\norm{\sol}^2),
\]
i.e., it grows polynomially with the norm of the solution $\sol$. As
dicussed previously (and illustrated in~\cref{table:comparison}), this
yields an exponential improvement over the sample and iteration complexity
of the method in~\cite{FVW+23}, which minimizes a smooth penalty using gradient
descent. Though at first surprising, the advantage of nonsmooth formulations
in signal recovery problems is well-documented in prior work~\cite{LZMV20,CCD+21}
showing that the $\ell_1$ penalty is both better-conditioned and
more robust to measurement noise than the squared $\ell_2$ loss. Finally,
we note that our convergence result can be extended to structured
signal recovery by incorporating appropriate convex constraints as in
\citet[Theorem 2]{FVW+23}. Since the proof techniques are similar, we do not pursue this generalization in
our paper.

\paragraph{Paper outline.}
In the remainder of this section, we discuss related work (\cref{sec:related-work}) and
review standard notation and constructions from nonsmooth analysis and high-dimensional probability used throughout the
paper (\cref{sec:notation}).
In~\cref{sec:regularity-properties}, we prove
that the loss function~\eqref{eq:nonsmooth-loss} satisfies certain
regularity properties, treating the initialization separately in~\cref{sec:initialization}.
In~\cref{sec:convergence} we analyze the convergence
of~\cref{alg:polyaksgm} and propose
a variant that adapts to the unknown parameter $\norm{\sol}$ in~\cref{sec:convergence-adaptive}.
\Cref{sec:experiments}  presents
a set of numerical experiments that demonstrate the behavior of our method and its
competitiveness with the approach of~\cite{FVW+23}. We conclude with a discussion
of the limitations of our approach and potential extensions of our work in~\cref{sec:discussion}.
Proofs of all technical results are deferred to~\cref{appendix:sec:missing-proofs}.

\subsection{Related work}
\label{sec:related-work}

Several algorithms for CT image reconstruction, including the standard FBP method,
can be viewed as discrete approximations of analytical inversion formulas and are
relatively inexpensive to implement, in contrast to iterative reconstruction (IR) methods.
While there are reasons beyond computational complexity impeding the integration of
iterative methods in CT hardware, as discussed in the survey of~\citet{PSV09}, advances
in computing and algorithm designed have led to renewed interest in such methods~\cite{BKK12,GSM+15}.
While a comprehensive overview of IR methods is beyond the scope of this article,
and can be found in surveys such as~\cite{BKK12,WdJL+13},
several of these methods are motivated by advances in numerical optimization: examples
include the classical algebraic reconstruction technique (ART)~\cite{Gordon74},
which employs the Kaczmarz algorithm for iteratively solving linear systems~\cite{Kaczmarz93};
the SAGE method of~\citet{FH94}, which uses alternating minimization to accelerate
the expectation-maximization (EM) method;
the ASD-POCS method of~\citet{SP08}, inspired by total-variation (TV) regularization
for image recovery~\cite{ROF92}; and the nonconvex ADMM approach of~\citet{BS24}.
Few of these works provide estimates on the sample and computational efficiency
of the proposed methods and, when they do, %
the estimates are typically not adapted to CT problems.
Finally, note that commercial iterative methods for CT imaging also incorporate
proprietary knowledge, such as machine geometry and detector characteristics,
to improve algorithm design~\cite{WdJL+13}.

Beyond computed tomography, several works design and analyze first-order methods
for signal recovery in other settings; this includes works that study the
sample and computational complexity of recovering a signal from magnitude-only or quadratic measurements~\cite{Soltanolkotabi2019} (also known as \emph{phase retrieval})
and measurements produced by piecewise nonlinearities such as ReLUs~\cite{Soltanolkotabi2017,FCG20},
as well as recovering low-rank matrices using the Burer-Monteiro factorization~\cite{MWCC20,CCD+21}.
Other works include~\cite{MBM18}, who develop a framework for transfering guarantees
from population to empirical risk for smooth loss functions and~\cite{CPT23}, which develops
nonasymptotic predictions for the trajectory of certain first-order methods using Gaussian
data. To the best of our knowledge, the work of~\citet{FVW+23} is the first
to study the computational and sample complexity of signal recovery under the
particular measurement model used in computed tomography.

\subsection{Notation and basic constructions}
\label{sec:notation}
We write $\ip{X, Y} := \mathsf{Tr}(X^{\T} Y)$ for the Euclidean inner product and
$\norm{X} = \sqrt{\ip{X, X}}$ for the induced norm. We denote the unit sphere in
$d$ dimensions by $\Sbb^{d-1}$ and the Euclidean ball centered at $\bar{x}$
and radius $r$ by $\cB(\bar{x}; r)$. When $A \in \Rbb^{m \times d}$, we will write
$\frobnorm{A} = \sqrt{\ip{A, A}}$ for its \emph{Frobenius} norm and
$\opnorm{A} := \sup_{x \in \Sbb^{d-1}} \norm{Ax}$ for its \emph{spectral}
norm.
Some of our guarantees depend on the
complementary Gaussian error function, $\erfc(t)$, defined by
\begin{equation}
	\erfc(t) = \frac{2}{\sqrt{\pi}} \int_{t}^{\infty} \exp(-u^2) \dd{u}.
	\label{eq:erfc-defn}
\end{equation}
Finally, will write $A \lesssim B$ to indicate that there exists a
constant $c > 0$ such that $A \leq cB$; the precise value
of $c$ may change from occurence to occurence.

\paragraph{Orlicz norms.} We write $\norm{X}_{\psi_q}$ for
the \emph{$q$-Orlicz norm}~\cite{Orlicz32} of a random variable $X$:
\[
	\norm{X}_{\psi_q} := \inf\set{t > 0 \mmid \expec{\expfun{\left(\frac{\abs{X}}{t}\right)^q}} \leq 2} \in [0, \infty].
\]
Any $X$ with $\norm{X}_{\psi_1} < \infty$ is called \emph{subexponential}; any $X$ with
$\norm{X}_{\psi_2} < \infty$ is called \emph{subgaussian}.

\paragraph{Nonsmooth analysis.} Consider a locally Lipschitz function $f: \Rbb^{d} \to \Rbb$ and
a point $x$. The \emph{Clarke subdifferential} of $f$ at $x$~\cite{Clarke1975}, denoted by $\partial f(x)$,
is the convex hull of limits of gradients evaluated at nearby points:
\begin{equation}
	\tag{\texttt{Clarke}}
	\partial f(x) := \convhull\set{
		\lim_{i \to \infty} \grad f(x_i) \mid x_{i} \overset{\Omega}{\to} x
	},
	\label{eq:clarke-subdifferential}
\end{equation}
where $\Omega \subset \Rbb^d$ is the set of points at which $f$ is differentiable. In particular,
if $f$ is $\lip$-Lipschitz on an open set $U$, then all $x \in U$ and $v \in \partial f(x)$
satisfy $\norm{v} \leq \lip$ (and vice-versa).

\section{Regularity properties of the loss function}
\label{sec:regularity-properties}
In this section, we show that the loss function satisfies certain
regularity properties, including Lipschitz continuity (\cref{sec:lipschitz-continuity})
and sharp growth (\cref{sec:sharpness}). We address the
initial point $x_{0} = \bm{0}$ separately, establishing that the
first iterate $x_1$ is positively correlated with $\sol$ and bounded away
from $0$ to facilitate the use of the analytical framework outlined in~\cref{sec:method-overview}.

\subsection{Properties at initialization}
\label{sec:initialization}
We show that when $x_{0} = \bm{0}$, the first iterate attains nontrivial
correlation with the solution $\sol$. To do so, we analyze the different
components making up the first subgradient step.

\begin{lemma}
	\label{lemma:loss-concentration-nosimpl}
	The loss function satisfies
	\begin{equation}
		\prob{\abs{f(0) - \frac{1}{2} \left(1 - \expfun{\frac{\norm{\sol}^2}{2}}
				\erfc\left(\frac{\norm{\sol}}{\sqrt{2}}\right) \right)}
			\geq \norm{\sol}\sqrt{\frac{d}{m}}
		} \leq 2\expfun{-d}.
		\label{eq:loss-concentration-nosimpl}
	\end{equation}
\end{lemma}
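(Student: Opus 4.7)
The plan is to split the argument into three steps: (i) rewrite $f(\bm{0})$ as a simple sample average, (ii) compute its mean in closed form, and (iii) apply a concentration inequality. For (i), since $h_i(\bm{0})=1-\exp(-[\ip{a_i,\bm{0}}]_+)=0$ and $y_i\in[0,1]$, the absolute value in $f$ collapses and
\[
f(\bm{0}) \;=\; \frac{1}{m}\sum_{i=1}^{m} y_i,
\]
a sum of i.i.d.\ variables that are bounded in $[0,1]$ and also dominated by $|\ip{a_i,\sol}|$ (using $1-e^{-x}\leq x$ for $x\geq 0$). This reduces the lemma to a mean computation and a tail bound.

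The bulk of the proof lies in step (ii). Writing $g:=\ip{a_i,\sol}\sim\cN(0,\norm{\sol}^2)$ and splitting on the sign of $g$, the factor $e^{-[g]_+}$ equals $1$ on $\{g\leq 0\}$ and $e^{-g}$ on $\{g>0\}$, so $\mathbb{E}[y_i]=\tfrac{1}{2}-\mathbb{E}[e^{-g}\1\{g>0\}]$. I would complete the square in the exponent of the Gaussian density times $e^{-g}$ via $-t-\tfrac{t^2}{2\norm{\sol}^2} = \tfrac{\norm{\sol}^2}{2}-\tfrac{(t+\norm{\sol}^2)^2}{2\norm{\sol}^2}$, then substitute $u=(t+\norm{\sol}^2)/(\norm{\sol}\sqrt{2})$ to reduce the remaining integral to $\int_{\norm{\sol}/\sqrt{2}}^{\infty}e^{-u^2}\,du = \tfrac{\sqrt{\pi}}{2}\,\mathrm{erfc}(\norm{\sol}/\sqrt{2})$. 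Collecting constants yields $\mathbb{E}[y_i]=\tfrac{1}{2}\bigl(1-e^{\norm{\sol}^2/2}\mathrm{erfc}(\norm{\sol}/\sqrt{2})\bigr)$, which is precisely the centering in the lemma. Tracking the $\sqrt{2}$ factors cleanly through the change of variables is the main technical chore.

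For (iii), combining the variance estimate $\mathrm{Var}(y_i)\leq \mathbb{E}[g^2] = \norm{\sol}^2$ (obtained from $y_i\leq |g|$) with the boundedness $y_i\in[0,1]$ gives a sub-Gaussian norm $\norm{y_i}_{\psi_2}\lesssim \min(1,\norm{\sol})$. Standard concentration for i.i.d.\ sub-Gaussian sums then yields, for any $t>0$,
\[
\mathbb{P}\!\left(\left|\tfrac{1}{m}\sum_{i=1}^{m}y_i - \mathbb{E}[y_i]\right|\geq t\right) \;\leq\; 2\exp\!\left(-c\, m t^2/\min(1,\norm{\sol})^2\right),
\]
and choosing $t=\norm{\sol}\sqrt{d/m}$ makes the exponent at least $cd$ in both regimes $\norm{\sol}\leq 1$ and $\norm{\sol}>1$. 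Absorbing the universal constant into $d$ recovers the stated tail bound of $2e^{-d}$.
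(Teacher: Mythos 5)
Your proposal is correct and arrives at the same centering constant, but the concentration half takes a genuinely different route from the paper. The paper views $f(0)$ as a single Lipschitz function of the whole Gaussian matrix $A=(a_1,\dots,a_m)$, shows its Lipschitz modulus is $\norm{\sol}/\sqrt{m}$ (since $x\mapsto e^{-x_+}$ is $1$-Lipschitz), and invokes Gaussian Lipschitz concentration; the mean is evaluated by the same completing-the-square computation you do, packaged in the appendix as the identity $\expec{e^{-cX}\1\set{X\geq 0}}=\tfrac12 e^{c^2/2}\erfc(c/\sqrt2)$. You instead treat $f(0)=\tfrac1m\sum_i y_i$ as an i.i.d.\ scalar average and apply sub-Gaussian Hoeffding with $\norm{y_i}_{\psi_2}\lesssim\min(1,\norm{\sol})$. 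Your route is more elementary (everything reduces to one-dimensional variables) and in fact gives a slightly better exponent when $\norm{\sol}\geq 1$ (order $mt^2$ rather than $mt^2/\norm{\sol}^2$), at the price of an unspecified universal constant, so you literally obtain $2e^{-cd}$ rather than $2e^{-d}$; ``absorbing the constant into $d$'' is not legitimate since $d$ is fixed by the statement, though the paper's own substitution is similarly loose (its displayed Gaussian-concentration bound yields $2e^{-d/2}$), so this is slack of the same order as the paper tolerates.

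One justification in your step (iii) should be repaired: a variance bound plus boundedness does \emph{not} control the $\psi_2$ norm (a Bernoulli$(p)$ variable has variance $\approx p$ but $\psi_2$ norm $\approx(\log(1/p))^{-1/2}\gg\sqrt{p}$). The correct argument is the domination you already record in step (i): $0\leq y_i\leq\min\set{1,\abs{\ip{a_i,\sol}}}$ pointwise, and monotonicity of the Orlicz norm gives $\norm{y_i}_{\psi_2}\leq\min\set{\norm{1}_{\psi_2},\norm{\ip{a_i,\sol}}_{\psi_2}}\lesssim\min(1,\norm{\sol})$, followed by the centering inequality for $y_i-\expec{y_i}$. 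With that substitution your proof is sound.
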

An immediate consequence of~\cref{lemma:loss-concentration-nosimpl} is that
$f(0)$ is of size $\Omega(1)$ when $\norm{\sol} = \Omega(1)$.
\begin{corollary}
	\label{corollary:loss-concentration-simpl}
	For any $\sol$ such that $\norm{\sol} \geq 1$ and
	$m \gtrsim d \cdot \norm{\sol}^2$, it holds that
	\begin{equation}
		\prob{f(0) \leq \frac{1}{5}} \leq \expfun{-\frac{d}{2}},
	\end{equation}
	where $c > 0$ is a universal constant.
\end{corollary}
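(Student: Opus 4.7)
The plan is to derive the corollary directly from \cref{lemma:loss-concentration-nosimpl}, which controls the deviation of $f(0)$ from the ``mean value''
\[
\mu(t) := \tfrac{1}{2}\bigl(1 - \expfun{t^2/2}\erfc(t/\sqrt{2})\bigr).
\]
On the event of probability at least $1 - 2e^{-d}$ supplied by the lemma, I have $f(0) \geq \mu(\norm{\sol}) - \norm{\sol}\sqrt{d/m}$, so it suffices to lower-bound $\mu(\norm{\sol})$ by a constant strictly larger than $1/5$, with enough slack to absorb the deviation term under the sample-complexity hypothesis $m \gtrsim d \norm{\sol}^2$.

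First I would show that $\mu$ is strictly increasing on $[0,\infty)$, so that $\mu(\norm{\sol}) \geq \mu(1)$ whenever $\norm{\sol} \geq 1$. Rewriting $\expfun{t^2/2}\erfc(t/\sqrt{2}) = 2\expfun{t^2/2}\prob{Z \geq t}$ for $Z \sim \cN(0,1)$ identifies this quantity (up to a constant) with the rescaled Mills ratio, which is classically strictly decreasing; a one-line differentiation combined with the standard bound $\prob{Z \geq t} \leq \phi(t)/t$ makes this rigorous.

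The key quantitative step is to lower-bound $\mu(1)$ by something exceeding $1/5$ with a definite margin. The naive Mills bound only yields $\mu(1) \geq \tfrac{1}{2}(1 - \sqrt{2/\pi}) \approx 0.10$, which is too weak; I would instead invoke a Komatsu-type tightening $\expfun{t^2/2}\int_t^\infty e^{-s^2/2}\,ds \leq 2/\bigl(t + \sqrt{t^2 + 8/\pi}\bigr)$, or simply appeal to the numerical evaluation $\mu(1) \approx 0.238$, to obtain $\mu(1) \geq 1/5 + \delta$ for some explicit constant $\delta > 0$. This is the step I expect to be the \textbf{main obstacle}: the rest of the argument is routine, but the standard $\phi(t)/t$ Mills bound is genuinely insufficient near $t = 1$, so one really has to reach for a sharper estimate (or a direct computation).

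With $\mu(1) \geq 1/5 + \delta$ in hand, I would choose the hidden constant $C$ in $m \geq C d \norm{\sol}^2$ large enough that $\norm{\sol}\sqrt{d/m} \leq 1/\sqrt{C} \leq \delta$ (concretely $C \geq 1/\delta^2$), so that $f(0) \geq \mu(\norm{\sol}) - \delta \geq 1/5$ on the high-probability event. Finally, the tail factor $2e^{-d}$ from the lemma is bounded by $e^{-d/2}$ for all $d \geq 2\log 2$, which may be assumed without loss of generality; this yields the stated inequality $\prob{f(0) \leq 1/5} \leq e^{-d/2}$.
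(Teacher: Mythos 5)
Your proposal is correct and follows essentially the same route as the paper: reduce to $\norm{\sol}=1$ via monotonicity of $u \mapsto e^{u^2/2}\erfc(u/\sqrt{2})$ (the paper's \cref{lemma:erfcx-increasing}), lower-bound the mean at $u=1$ by a constant exceeding $1/5$, and absorb the deviation term from \cref{lemma:loss-concentration-nosimpl} using $m \gtrsim d\norm{\sol}^2$. The only cosmetic difference is at the quantitative step: the paper simply evaluates $\sqrt{\mathrm{e}}\,\erfc(1/\sqrt{2}) \leq 0.53$ numerically (giving $0.235$), whereas you invoke the Komatsu-type bound (which the paper itself uses in \cref{lemma:erfcx-increasing,lemma:xerfcx-ub}) or the same numerical evaluation, and you are more explicit than the paper about choosing the hidden constant and converting $2e^{-d}$ to $e^{-d/2}$.
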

The next Lemma shows that a subgradient at $0$ is bounded and correlated with $-\sol$,
leading to sufficient decrease in distance in the first step of~\cref{alg:polyaksgm}.
\begin{lemma}
	\label{lemma:subgradient-correlation-at-0}
	Consider the following subgradient at $0$:
	\begin{equation}
		v_0 := -\frac{1}{m} \sum_{i = 1}^m a_i \1\set{\ip{a_i, \sol} > 0} \in \partial f(0).
	\end{equation}
	Then with probability at least $1 - 2e^{-d}$, it holds that
	\begin{subequations}
		\begin{align}
			-\ip{v_0, \sol}   & \geq \norm{\sol} \left(\sqrt{\frac{1}{2 \pi}} - \sqrt\frac{2d}{m}\right), \label{eq:v0-correlation} \\
			\norm{v_0}        & \leq 1 + 2 \sqrt{\frac{d}{m}}, \label{eq:v0-x-norm}                                                 \\
			\norm{x_1 - \sol} & \leq
			\norm{\sol} \left(
			1 - \frac{\eta}{10 \norm{\sol} \sqrt{\pi}}
			\right)^{\sfrac{1}{2}},
			\label{eq:initial-distance}
		\end{align}
	\end{subequations}
	as long as $m \gtrsim d$ and $\eta \leq \frac{1}{2}$.
\end{lemma}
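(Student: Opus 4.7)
My plan is to verify each of the three inequalities in turn. The first two are standard concentration statements about Gaussian averages, and the third is a direct expansion of the squared-distance recursion for the Polyak step that combines them with the previously established control on $f(0)$.

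For the correlation bound \eqref{eq:v0-correlation}, the stated choice of subgradient satisfies
\[
-\ip{v_0, \sol} = \frac{1}{m}\sum_{i=1}^{m} \ip{a_i, \sol}_+,
\]
an average of i.i.d.\ random variables each distributed as $\norm{\sol} \cdot (g)_+$ with $g \sim \cN(0,1)$; the mean is $\norm{\sol}/\sqrt{2\pi}$ and the Orlicz $\psi_2$-norm is of order $\norm{\sol}$. A standard Hoeffding inequality for subgaussian sums then yields a two-sided deviation of $\norm{\sol}\sqrt{2d/m}$ at confidence $1 - 2e^{-d}$, as claimed.

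For the norm bound \eqref{eq:v0-x-norm}, I would control $v_0$ via its mean and fluctuation. Writing $u := \sol/\norm{\sol}$ and decomposing $a_i$ into components along $u$ and $u^\perp$, rotational invariance and independence give $\expec{v_0} = -(1/\sqrt{2\pi})\, u$, so $\norm{\expec{v_0}} = 1/\sqrt{2\pi} < 1$. For any fixed unit $w$, $\ip{w, v_0 - \expec{v_0}}$ is a centered average of subgaussian summands $\1\set{\ip{a_i,\sol} > 0}\cdot \ip{a_i, w}$ whose $\psi_2$-norms are bounded by $\norm{\ip{a_i, w}}_{\psi_2}$, a universal constant. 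A $\tfrac12$-net on $\Sbb^{d-1}$ combined with a union bound then gives $\norm{v_0 - \expec{v_0}} \leq C\sqrt{d/m}$ with probability at least $1 - 2e^{-d}$, and \eqref{eq:v0-x-norm} follows by triangle inequality once the implicit constant in $m \gtrsim d$ is large enough to absorb $C$ into the stated $2\sqrt{d/m}$ slack.

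For the distance bound \eqref{eq:initial-distance}, I would expand
\[
\norm{x_1 - \sol}^2 = \norm{\sol}^2 - 2\eta f(0)\,\frac{-\ip{v_0, \sol}}{\norm{v_0}^2} + \eta^2 \frac{f(0)^2}{\norm{v_0}^2},
\]
and plug in the lower bound from (a), the upper and matching lower bound on $\norm{v_0}$ obtained from the covering argument of (b), the lower bound $f(0) \geq 1/5$ from \cref{corollary:loss-concentration-simpl}, and the upper bound $f(0) \leq 1/2 + O(\norm{\sol}\sqrt{d/m})$ from \cref{lemma:loss-concentration-nosimpl}. The linear-in-$\eta$ term then produces a decrease of order $\eta\norm{\sol}$, while the quadratic term is absorbed using $\eta^2 \leq \eta/2$ for $\eta \leq 1/2$. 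The principal difficulty is this final bookkeeping: the Polyak step size $\eta f(0)/\norm{v_0}^2$ is \emph{not} the optimal step $(-\ip{v_0,\sol})/\norm{v_0}^2$ that would minimize the quadratic in $\alpha$, so the classical Polyak sufficient-decrease identity does not apply verbatim and the quadratic-in-$\eta$ term can be comparable to the linear gain when $\norm{\sol}$ is not much larger than one. Producing the precise constant $1/(10\sqrt{\pi})$ will require a sufficiently large implicit constant in $m \gtrsim d$ (absorbing any mild $\norm{\sol}$-dependence into the $\gtrsim$), using $\eta \leq 1/2$ to tame the quadratic term, and invoking the standing lower bound $\norm{\sol} \geq 1$ already needed for \cref{corollary:loss-concentration-simpl}.
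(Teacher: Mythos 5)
Your overall skeleton matches the paper's proof: the same distributional identity $-\ip{v_0,\sol}=\frac1m\sum_i\ip{a_i,\sol}_+$ with mean $\norm{\sol}/\sqrt{2\pi}$ for \eqref{eq:v0-correlation}, and the same expansion of $\norm{x_1-\sol}^2$ with the same three ingredients (correlation lower bound, $\norm{v_0}$ upper bound, $f(0)\geq\frac15$ from \cref{corollary:loss-concentration-simpl}) for \eqref{eq:initial-distance}. The differences are in the concentration tools: the paper gets \eqref{eq:v0-correlation} from Gaussian--Lipschitz concentration (the sum is $(\norm{\sol}/\sqrt m)$-Lipschitz in $A$), which yields the explicit constant $\sqrt{2d/m}$, whereas your subgaussian Hoeffding route gives the same deviation only up to an absolute constant; and for \eqref{eq:v0-x-norm} the paper simply bounds $\norm{v_0}\leq\frac1m\sup_u\sum_i\abs{\ip{a_i,u}}\leq\opnorm{A}/\sqrt m\leq 1+2\sqrt{d/m}$ via the standard Gaussian operator-norm bound, avoiding your mean-plus-fluctuation net argument (which is correct but only recovers the stated constant after enlarging the implicit constant in $m\gtrsim d$). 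These are cosmetic; both routes are valid.

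The one place you make the argument harder and slightly weaker than necessary is the bookkeeping in \eqref{eq:initial-distance}. You propose to control the quadratic term using the concentration upper bound $f(0)\leq\frac12+O(\norm{\sol}\sqrt{d/m})$ and to ``absorb the $\norm{\sol}$-dependence into the $\gtrsim$''; as stated, that changes the hypothesis from $m\gtrsim d$ to $m\gtrsim d\norm{\sol}^2$, which the lemma does not assume. The paper sidesteps this entirely with the deterministic bound $f(0)\leq 1$ (each summand $1-e^{-\ip{a_i,\sol}_+}\leq 1$), writing
\[
\norm{x_1-\sol}^2=\norm{\sol}^2-\frac{\eta f(0)}{\norm{v_0}^2}\bigl(-2\ip{v_0,\sol}-\eta f(0)\bigr),
\]
and then using \eqref{eq:v0-correlation}, $\eta\leq\frac12$, and the standing assumption $\norm{\sol}\geq 1$ to keep the bracket of order $\norm{\sol}$, with $f(0)\geq\frac15$ and $\norm{v_0}^2\lesssim 1$ supplying the final constant. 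Likewise, no lower bound on $\norm{v_0}$ is needed, since $\norm{v_0}^2$ only appears in the denominator of a nonnegative decrease term. With these simplifications your concern about the quadratic term dominating when $\norm{\sol}\approx 1$ is resolved exactly as in the paper, and the proof goes through under $m\gtrsim d$ alone.
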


\subsection{Lipschitz continuity}
\label{sec:lipschitz-continuity}
We prove that the loss function is Lipschitz-continuous; note
that the Lipschitz modulus is dimension-independent when $m \gtrsim d$.

\begin{proposition}[Lipschitz continuity]
	\label{prop:lipschitz}
	The loss function is $\lip$-Lipschitz continuous, where $\lip := 1 + 2 \sqrt{\frac{d}{m}}$,
	with probability at least $1 - \expfun{-d}$:
	\begin{equation}
		\abs{f(x) - f(\bar{x})} \leq \lip \norm{x - \bar{x}}, \quad
		\text{for all $x, \bar{x} \in \Rbb^d$.}
	\end{equation}
\end{proposition}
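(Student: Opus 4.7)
The plan is to reduce the Lipschitz estimate to a concentration bound on the spectral norm of the Gaussian design matrix $A \in \Rbb^{m \times d}$ whose $i$-th row is $a_i^{\T}$. First, I would apply the reverse triangle inequality to the $1$-Lipschitz outer function $t \mapsto \abs{y_i - t}$ to obtain
\[
\abs{f(x) - f(\bar x)} \le \frac{1}{m} \sum_{i = 1}^m \abs{h_i(x) - h_i(\bar x)}.
\]
Next I would note that $g(t) := 1 - \expfun{-[t]_+}$ is itself $1$-Lipschitz on $\Rbb$: on the set of differentiability $g'(t) = \expfun{-t}\1\set{t > 0}$ lies in $[0, 1]$, and $g$ is globally continuous. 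Since $h_i = g \circ \ip{a_i, \cdot}$ is the composition of $g$ with a linear form, this yields the pointwise bound $\abs{h_i(x) - h_i(\bar x)} \le \abs{\ip{a_i, x - \bar x}}$.

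Applying Cauchy-Schwarz to $\frac{1}{m} \sum_i \abs{\ip{a_i, x - \bar x}}$ then gives
\[
\abs{f(x) - f(\bar x)} \le \frac{1}{\sqrt m}\, \norm{A(x - \bar x)} \le \frac{\opnorm{A}}{\sqrt m}\, \norm{x - \bar x},
\]
so it suffices to establish $\opnorm{A}/\sqrt m \le 1 + 2\sqrt{d/m}$ with probability at least $1 - \expfun{-d}$.

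The final step invokes a standard Gaussian spectral-norm tail bound: because the map $B \mapsto \opnorm{B}$ is $1$-Lipschitz in the Frobenius norm and $\expec{\opnorm{A}} \le \sqrt m + \sqrt d$, Gaussian concentration yields $\opnorm{A} \le \sqrt m + \sqrt d + t$ with probability at least $1 - \expfun{-t^2/2}$. Choosing $t$ of order $\sqrt d$ so that the failure probability is at most $\expfun{-d}$ and absorbing constants into the stated modulus gives the claim. There is essentially no conceptual obstacle: each ingredient is either routine calculus or a standard Gaussian matrix tail bound. The only subtlety worth flagging is that a naive per-row bound by $m^{-1}\sum_i \norm{a_i} = \Theta(\sqrt d)$ would produce a dimension-dependent modulus; passing through Cauchy-Schwarz and the spectral norm of $A/\sqrt m$ is what makes $\lip$ dimension-free in the regime $m \gtrsim d$.
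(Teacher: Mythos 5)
Your proposal is correct and follows essentially the same route as the paper's proof: peel off the $1$-Lipschitz absolute value, use that $t \mapsto \exp(-[t]_+)$ (equivalently $1 - \exp(-[t]_+)$) is $1$-Lipschitz, pass through Cauchy--Schwarz to bound the average by $\opnorm{A}/\sqrt{m}\,\norm{x - \bar{x}}$, and control $\opnorm{A}$ by a Gaussian operator-norm tail bound. The only difference is that you re-derive that tail bound via $\expec{\opnorm{A}} \leq \sqrt{m} + \sqrt{d}$ plus Gaussian Lipschitz concentration, whereas the paper cites it directly (\citealp[Corollary 7.3.3]{Vershynin18}); the resulting constant bookkeeping is equally loose in both treatments, so this is immaterial.
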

This immediately implies the following upper bound on subgradient norms:
\begin{corollary}
	With probability at least $1 - \expfun{-d}$, we have that
	\begin{equation}
		\sup_{x \in \Rbb^d} \max_{v \in \partial f(x)} \norm{v} \leq 1 + 2 \sqrt{\frac{d}{m}}.
		\label{eq:subgradient-norm-bound}
	\end{equation}
\end{corollary}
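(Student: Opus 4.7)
The plan is to derive the corollary by combining the Lipschitz bound from \cref{prop:lipschitz} with a standard fact about the Clarke subdifferential that was already recorded at the end of the ``Nonsmooth analysis'' paragraph of \cref{sec:notation}. Concretely, on the high-probability event $\mathcal{E}$ on which \cref{prop:lipschitz} holds, the loss $f$ is globally $\lip$-Lipschitz with $\lip = 1 + 2\sqrt{d/m}$; the corollary then follows by arguing that every Clarke subgradient of a globally $\lip$-Lipschitz function has norm at most $\lip$.

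The main steps are as follows. First, I would condition on the event $\mathcal{E}$ from \cref{prop:lipschitz}, which has probability at least $1 - e^{-d}$. Next, I would invoke Rademacher's theorem to observe that the set $\Omega \subset \Rbb^d$ where $f$ is differentiable has full Lebesgue measure, so in particular the construction~\eqref{eq:clarke-subdifferential} of $\partial f(x)$ is well-defined at every $x$. At any $x \in \Omega$, Lipschitz continuity and the definition of the gradient as a limit of difference quotients yield
\[
	\norm{\grad f(x)} = \sup_{u \in \Sbb^{d-1}} \ip{\grad f(x), u}
	= \sup_{u \in \Sbb^{d-1}} \lim_{t \downarrow 0} \frac{f(x + tu) - f(x)}{t}
	\leq \lip.
\]
Then, for an arbitrary $x \in \Rbb^d$ and any sequence $x_i \overset{\Omega}{\to} x$ for which $\grad f(x_i)$ converges, the limit inherits the norm bound $\lip$ by continuity of $\norm{\cdot}$. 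Finally, since $\norm{\cdot}$ is convex, the convex hull appearing in~\eqref{eq:clarke-subdifferential} preserves the bound: every $v \in \partial f(x)$ can be written as $v = \sum_{j} \lambda_j w_j$ with $\lambda_j \geq 0$, $\sum_j \lambda_j = 1$, and $\norm{w_j} \leq \lip$, so $\norm{v} \leq \sum_j \lambda_j \norm{w_j} \leq \lip$. Taking the supremum over $x$ and the maximum over $v \in \partial f(x)$ gives~\eqref{eq:subgradient-norm-bound}, still on the event $\mathcal{E}$.

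There is no real obstacle here: the entire content of the corollary is that ``Lipschitz continuity of $f$ implies a uniform bound on Clarke subgradients,'' which is the standard equivalence flagged in \cref{sec:notation}. The only thing to be careful about is not to re-derive the probability bound, but to inherit it verbatim from \cref{prop:lipschitz}, since the Clarke-subgradient argument is deterministic once the Lipschitz constant is fixed.
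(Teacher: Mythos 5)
Your proposal is correct and matches the paper's (implicit) argument: the paper derives the corollary directly from \cref{prop:lipschitz} together with the standard fact, recorded in \cref{sec:notation}, that an $\lip$-Lipschitz function has Clarke subgradients of norm at most $\lip$, with the probability inherited verbatim from the Lipschitz event. Your write-up simply fills in the routine details of that equivalence, so nothing more is needed.
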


\subsection{Sharpness}
\label{sec:sharpness}
In this section, we establish that the loss function $f$ grows \emph{sharply} away from its minimizer:
\[
	f(x) - f(\sol) \geq \mu \norm{x - \sol},
	\quad \text{for all $x$ ``near'' $\sol$,}
\]
where $\mu = \frac{1}{4 \sqrt{\pi} (1 + 9 \pi \norm{\sol}^2)}$. For technical reasons, we will prove this claim
for all $x \in \cB(\bm{0}; 2 \norm{\sol})$, as we will later show that all iterates of
\cref{alg:polyaksgm} initialized at $\bm{0}$ remain within that ball. Key
to establishing the above claim is the following ``aiming'' inequality:
\begin{align}
	\tag{\texttt{Aiming}}
	\min_{v \in \partial f(x)} \ip{v, x - \sol} \geq \mu \norm{x - \sol}, \quad
	\text{for all $x \in \cB(\bm{0}; 3 \norm{\sol}) \setD \set{0}$}.
	\label{eq:aiming-abstract}
\end{align}
The reason~\eqref{eq:aiming-abstract} implies sharpness for $f$ is captured in
the following technical result:
\begin{lemma}[Solvability lemma]
	\label{lemma:sharpness-from-aiming}
	Suppose $f$ is Lipschitz on $\cB(\bm{0}; 3 \norm{\sol})$ and that
	the~\eqref{eq:aiming-abstract} inequality holds. Then we have that
	\begin{equation}
		f(x) - f_{\star} \geq \mu
		\min\set{\norm{x - x_{\star}}, \norm{x}}, \;\;
		\text{for all $x \in \cB(\sol; \norm{\sol})$.}
		\label{eq:sharpness-from-aiming}
	\end{equation}
\end{lemma}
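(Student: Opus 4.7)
The plan is to combine the aiming hypothesis with Lebourg's nonsmooth mean value theorem, applied along the straight line segment joining $\sol$ to $x$. Fix $x \in \cB(\sol; \norm{\sol})$; if $x = \sol$ both sides vanish, so assume $x \neq \sol$, and set $\gamma(t) := \sol + t(x - \sol)$ for $t \in [0, 1]$.

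First I would carry out the geometric check that $\gamma(t) \in \cB(\bm{0}; 3\norm{\sol}) \setD \set{0}$ for every $t \in (0, 1)$. The triangle inequality gives $\norm{\gamma(t)} \leq \norm{\sol} + t\norm{x - \sol} \leq 2\norm{\sol}$, so the segment is comfortably inside the ball of radius $3 \norm{\sol}$ on which $f$ is assumed Lipschitz. For the exclusion of $0$, note that $\gamma(t) = 0$ with $t \in (0, 1]$ forces $x = -\tfrac{1-t}{t}\sol$, whence $\norm{x - \sol} = \tfrac{1}{t}\norm{\sol} \geq \norm{\sol}$, with equality only when $t = 1$ (which yields $x = 0$). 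Consequently $\gamma(\tau) \neq 0$ for every interior $\tau \in (0, 1)$.

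Next I would invoke Lebourg's nonsmooth mean value theorem on $f$ restricted to $[\sol, x]$: since $f$ is Lipschitz on $\cB(\bm{0}; 3\norm{\sol})$, there exist some $\tau \in (0, 1)$ and $v \in \partial f(\gamma(\tau))$ such that
\[
	f(x) - f(\sol) = \ip{v, x - \sol}.
\]
By the previous step, $\gamma(\tau) \in \cB(\bm{0}; 3\norm{\sol}) \setD \set{0}$, so the aiming hypothesis applied at $\gamma(\tau)$ produces $\ip{v, \gamma(\tau) - \sol} \geq \mu \norm{\gamma(\tau) - \sol}$. Substituting $\gamma(\tau) - \sol = \tau(x - \sol)$ and dividing through by $\tau > 0$ yields $\ip{v, x - \sol} \geq \mu \norm{x - \sol}$, and combining with Lebourg's identity gives
\[
	f(x) - f_{\star} \;\geq\; \mu \norm{x - \sol} \;\geq\; \mu \min\set{\norm{x - \sol}, \norm{x}},
\]
which is the claim.

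The main technical obstacle is ensuring that the segment $[\sol, x]$ meets the origin only at an endpoint, where aiming is not assumed to hold; this is precisely where the radius $\norm{\sol}$ of the ball in the conclusion is used, since an $x$ lying outside $\cB(\sol; \norm{\sol})$ in the direction of $-\sol$ would produce a segment passing through $0$ and break the argument. Once this geometric fact is established, the remainder is a one-line application of Lebourg's theorem followed by a one-line application of aiming.
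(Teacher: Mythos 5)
Your proof is correct, but it takes a genuinely different route from the paper. The paper argues by contradiction via a nonsmooth ``decrease principle'' (an adaptation of a result of Clarke--Ledyaev--Stern--Wolenski to the Clarke subdifferential), which uses only the weaker consequence of~\eqref{eq:aiming-abstract} that every subgradient at points of $\cB(\bm{0};3\norm{\sol}) \setD \set{0}$ has norm at least $\mu$, together with the auxiliary \cref{lemma:minimum-irrelevant} to guarantee that a suitable ball around $x$ stays inside the tube; it is precisely this localization around $x$ that produces the $\min\set{\norm{x-\sol},\norm{x}}$ in the conclusion. You instead apply Lebourg's mean value theorem for the Clarke subdifferential along the segment $[\sol,x]$ and exploit the full \emph{directional} content of aiming at an interior point $\gamma(\tau)$, where $\gamma(\tau)-\sol = \tau(x-\sol)$ lets you divide out $\tau$. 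Your geometric check is the right crux and is done correctly: interior points of the segment satisfy $\norm{\gamma(t)}\le 2\norm{\sol}$ and can only hit the origin if $\norm{x-\sol} > \norm{\sol}$, which is excluded on $\cB(\sol;\norm{\sol})$, and Lebourg's theorem conveniently never requires aiming at the endpoints. The comparison: the paper's argument is more robust in that it needs only a subgradient-norm lower bound (and would survive at points whose segment to $\sol$ crosses the excluded origin, which is why the $\min$ term appears), while your argument is shorter, avoids both the decrease principle and \cref{lemma:minimum-irrelevant}, and on the stated domain actually proves the strictly stronger inequality $f(x)-f_{\star} \ge \mu \norm{x-\sol}$ for all $x \in \cB(\sol;\norm{\sol})$, from which \eqref{eq:sharpness-from-aiming} follows a fortiori.
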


\begin{remark}
	\label{remark:solvability-and-sharpness}
	While the lower bound furnished by~\cref{lemma:sharpness-from-aiming} is weaker
	than the sharp growth inequality~\eqref{eq:sharp-growth}, our convergence analysis
	shows that the norm of the iterates produced by~\cref{alg:polyaksgm}
	stays bounded away from $0$ and, after an initial ``burn-in'' phase, surpasses the distance to $\sol$.
	Consequently,~\eqref{eq:sharpness-from-aiming} eventually reduces to the sharp
	growth property.
\end{remark}
We now turn to the proof of the aiming inequality.
\subsubsection{Proof of the aiming inequality}
To establish~\eqref{eq:aiming-abstract}, we first derive a convenient expression for the
inner product.

\begin{lemma}[Subdifferential inner product]
	\label{lemma:aiming-inner-product}
	For any point $x \in \Rbb^d$, we have that
	\begin{equation}
		\ip{\partial f(x), x - \sol} =
		\begin{aligned}[t]
			 & \frac{1}{m} \sum_{i = 1}^m e^{-\ip{a_i, x}} \abs{\ip{a_i, x - \sol}} \1\set{\ip{a_i, x} > 0}                    \\
			 & + \frac{1}{m} \sum_{i=1}^m \1\set{\ip{a_i, x} = 0} \left(\ip{a_i, \sol}_+ + \sign(0) \ip{a_i, \sol}_{-}\right).
		\end{aligned}
		\label{eq:aiming-inner-product}
	\end{equation}
\end{lemma}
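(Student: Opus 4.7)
The plan is to unpack the subdifferential formula~\eqref{eq:subgradient} and compute the inner product term by term, partitioning the indices $\set{1,\dots,m}$ according to the sign of $\ip{a_i, x}$ to handle the piecewise definition of the indicator and the sign function. Writing any $v \in \partial f(x)$ as
\[
  v = -\frac{1}{m} \sum_{i=1}^m \sigma_i \, e^{-\ip{a_i, x}_+} s_i \, a_i, \quad \sigma_i \in \sign(y_i - h_i(x)),\;\; s_i \in \1\set{\ip{a_i, x} \geq 0},
\]
I would take inner product with $x - \sol$, which reduces the claim to identifying, for each $i$, the quantity $-\sigma_i e^{-\ip{a_i, x}_+} s_i \ip{a_i, x - \sol}$ with the $i^{\text{th}}$ summand of the right-hand side.

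For indices with $\ip{a_i, x} > 0$, the indicator forces $s_i = 1$ and $e^{-\ip{a_i, x}_+} = e^{-\ip{a_i, x}}$, so the task is to verify $-\sigma_i \ip{a_i, x - \sol} = \abs{\ip{a_i, x - \sol}}$. A two-case split on $\sign(\ip{a_i, \sol})$ suffices: if $\ip{a_i, \sol} > 0$, then $y_i - h_i(x) = e^{-\ip{a_i, x}} - e^{-\ip{a_i, \sol}}$, and strict monotonicity of $t \mapsto e^{-t}$ on $\Rbb_+$ gives $\sigma_i = \sign(\ip{a_i, \sol - x}) = -\sign(\ip{a_i, x - \sol})$; if $\ip{a_i, \sol} \leq 0$, then $y_i = 0 < h_i(x)$ forces $\sigma_i = -1$, while $\ip{a_i, x - \sol} = \ip{a_i, x} + \abs{\ip{a_i, \sol}} > 0$, so the identity again holds. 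Indices with $\ip{a_i, x} < 0$ contribute zero since $s_i = 0$, consistent with their absence on the right-hand side.

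For boundary indices $\ip{a_i, x} = 0$, the identity $\ip{a_i, x - \sol} = -\ip{a_i, \sol}$ together with $e^{-\ip{a_i, x}_+} = 1$ reduces the contribution to $\sigma_i s_i \ip{a_i, \sol}$. Applying the implementation convention of~\cref{remark:subgradient-choice} to set $s_i = 1$, one more case split on $\sign(\ip{a_i, \sol})$ finishes the calculation: when $\ip{a_i, \sol} > 0$, $\sigma_i$ is pinned to $+1$ so the contribution equals $\ip{a_i, \sol} = \ip{a_i, \sol}_+$; when $\ip{a_i, \sol} \leq 0$, $\sigma_i$ ranges over $[-1, 1] = \sign(0)$ and the contribution equals $\sign(0) \cdot \ip{a_i, \sol} = \sign(0) \cdot \ip{a_i, \sol}_-$ (with $\ip{a_i, \sol}_+ = 0$). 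Summing the two descriptions recovers the boundary term $\ip{a_i, \sol}_+ + \sign(0) \ip{a_i, \sol}_-$.

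The only genuine content beyond bookkeeping is the first case-split, where extracting the sign of a difference of two exponentials relies on strict monotonicity of $t \mapsto e^{-t}$. Everything else is algebraic manipulation of a closed-form subdifferential, and I expect no further obstacle.
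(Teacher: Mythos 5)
Your proof is correct and follows essentially the same route as the paper's: both expand the chain-rule expression for $\partial f(x)$ and evaluate $\ip{v, x - \sol}$ termwise, pinning $\sign(y_i - h_i(x))$ via strict monotonicity of $t \mapsto e^{-t}$ and treating the boundary indices $\ip{a_i, x} = 0$ separately, the only difference being that you organize the case split by the sign of $\ip{a_i, x}$ (and then of $\ip{a_i, \sol}$) whereas the paper splits on the sign of $\ip{a_i, x - \sol}$, which is immaterial. Your appeal to \cref{remark:subgradient-choice} to set the boundary indicator to $1$ mirrors the paper's own loose handling of the set-valued factor $\1\set{\ip{a_i, x} = 0}$, so it introduces no gap beyond what is already present in the statement itself.
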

With the help of~\cref{lemma:aiming-inner-product}, we can establish
the desired inequality in expectation.
\begin{lemma}[Aiming in expectation]
	\label{lemma:aiming-lb}
	For any $x \in \cB(\bm{0}; 3 \norm{\sol}) \setD \set{0}$, it follows that
	\begin{equation}
		\mathbb{E}[\ip{\partial f(x), x - \sol}]
		\geq \frac{\norm{x - \sol}}{\sqrt{8 \pi}\left(1 + 9 \pi \norm{\sol}^2\right)}.
		\label{eq:aiming-lb}
	\end{equation}
\end{lemma}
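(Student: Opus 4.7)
The plan is to reduce the expectation in \eqref{eq:aiming-lb} to a one-sample 2D Gaussian computation, then control the resulting one-dimensional integrals with Mills-ratio estimates. Starting from~\cref{lemma:aiming-inner-product}, I would first observe that for any $x \neq 0$ the event $\set{\ip{a_i, x} = 0}$ has Gaussian measure zero, so the boundary term vanishes almost surely and, by i.i.d.\ symmetry,
\begin{equation*}
	\mathbb{E}[\ip{\partial f(x), x - \sol}] = \mathbb{E}\left[e^{-\ip{a, x}} \abs{\ip{a, x - \sol}} \1\set{\ip{a, x} > 0}\right], \quad a \sim \cN(0, I_d).
\end{equation*}
The integrand depends on $a$ only through the pair $(\ip{a, x}, \ip{a, x - \sol})$, so I would decompose $a = \xi \hat{x} + \eta w + a_\perp$, where $\hat{x} = x / \norm{x}$ and $w$ is any unit vector orthogonal to $\hat{x}$ lying in $\mathrm{span}(x, x - \sol)$. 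Writing $s := \norm{x}$, $\alpha := \ip{\hat{x}, x - \sol}$, and $\beta := \ip{w, x - \sol}$, we obtain $\xi, \eta \iid \cN(0, 1)$, $\ip{a, x} = s\xi$, $\ip{a, x - \sol} = \alpha \xi + \beta \eta$, and $\alpha^2 + \beta^2 = \norm{x - \sol}^2$.

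Next, I would condition on $\xi$ and apply the folded-normal lower bound $\mathbb{E}\abs{Z} \geq \max(\abs{\mu}, \sqrt{2/\pi}\,\sigma)$ for $Z \sim \cN(\mu, \sigma^2)$; this follows from the fact that $g(t) := \sqrt{2/\pi}\,e^{-t^2/2} + t(2\Phi(t) - 1)$ is minimized at $t = 0$ with $g(0) = \sqrt{2/\pi}$. Applied to $Z = \alpha\xi + \beta\eta$ conditionally on $\xi > 0$, this yields $\mathbb{E}_\eta\abs{\alpha\xi + \beta\eta} \geq \max(\abs{\alpha}\xi, \sqrt{2/\pi}\abs{\beta}) \geq \tfrac{1}{2}(\abs{\alpha}\xi + \sqrt{2/\pi}\abs{\beta})$. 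Taking the outer expectation over $\xi$ and rearranging gives
\begin{equation*}
	\mathbb{E}[\ip{\partial f(x), x - \sol}] \geq \tfrac{1}{2}\bigl(\abs{\alpha}\, A(s) + \sqrt{2/\pi}\,\abs{\beta}\, B(s)\bigr),
\end{equation*}
where $A(s) := \mathbb{E}[\xi\, e^{-s\xi} \1\set{\xi > 0}]$ and $B(s) := \mathbb{E}[e^{-s\xi} \1\set{\xi > 0}] = e^{s^2/2}(1 - \Phi(s))$. Completing the square in $\xi$ produces the identity $A(s) = (2\pi)^{-1/2} - s B(s)$, and differentiating shows both $A$ and $B$ are strictly decreasing in $s$.

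Finally, I would use $\abs{\alpha} + \abs{\beta} \geq \sqrt{\alpha^2 + \beta^2} = \norm{x - \sol}$ to obtain $\mathbb{E}[\ip{\partial f(x), x - \sol}] \geq \tfrac{1}{2}\min(A(s), \sqrt{2/\pi}\,B(s)) \cdot \norm{x - \sol}$. Since $s \leq 3\norm{\sol}$ and both functions are monotone, it suffices to lower-bound them at the endpoint $s = 3\norm{\sol}$. For $B$ I would invoke the Komatsu/Mills estimate $1 - \Phi(s) \geq \frac{s}{s^2 + 1}\phi(s)$, which gives $B(s) \geq \frac{s}{(s^2 + 1)\sqrt{2\pi}}$. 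For $A$, I would upper-bound $sB(s) = \tfrac{1}{\sqrt{2\pi}}\int_0^\infty e^{-u}\, e^{-u^2/(2s^2)}\,du$ by exploiting $1 - e^{-u^2/(2s^2)} \geq \frac{u^2}{u^2 + 2s^2}$ to produce a matching lower bound on $A(s)$ of order $(1 + s^2)^{-1}$. The main obstacle is keeping enough precision in these Mills-type estimates to recover the exact prefactor $\bigl(\sqrt{8\pi}(1 + 9\pi\norm{\sol}^2)\bigr)^{-1}$ claimed in the lemma; conceptually, however, every step reduces to an elementary one-dimensional Gaussian integral once the 2D decomposition and folded-normal bound are in place.
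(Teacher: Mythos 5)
Your strategy is essentially the paper's: after discarding the measure-zero boundary term from \cref{lemma:aiming-inner-product}, both arguments reduce to the same two-dimensional Gaussian computation, and your folded-normal step $\mathbb{E}_{\eta}\abs{\alpha\xi + \beta\eta} \geq \max\bigl(\abs{\alpha}\xi, \sqrt{2/\pi}\,\abs{\beta}\bigr) \geq \tfrac12\bigl(\abs{\alpha}\xi + \sqrt{2/\pi}\,\abs{\beta}\bigr)$ yields exactly the same coefficients as the paper's device of restricting the absolute value to a quadrant where both components share a sign: either way one lands on $\tfrac12\bigl(\abs{\alpha}A(s) + \sqrt{2/\pi}\,\abs{\beta}B(s)\bigr)$ with $A, B$ as in \cref{lemma:exponential-gaussian-integrals}, followed by $\abs{\alpha}+\abs{\beta} \geq \norm{x-\sol}$ and monotonicity in $s \leq 3\norm{\sol}$.

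The gap is the final quantitative step, which you flag but do not close. To recover the stated constant you need $\min\bigl(A(s), \sqrt{2/\pi}\,B(s)\bigr) \geq \frac{1}{\sqrt{2\pi}(1+\pi s^2)}$ at $s = 3\norm{\sol}$. Your Mills bound settles the $B$ part (for $s \geq 1$, an assumption the paper's own proof also uses via $\gamma \geq 1$), but the $A$ part is precisely where precision is required: the paper supplies it through \cref{lemma:xerfcx-ub}, i.e. $s\, e^{s^2/2}\erfc(s/\sqrt{2}) \leq \sqrt{2/\pi}\bigl(1 - \frac{1}{1+\pi s^2}\bigr)$ for $s \geq 1$, which is equivalent to $A(s) \geq \frac{1}{\sqrt{2\pi}(1+\pi s^2)}$. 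Your proposed route via $1 - e^{-v} \geq \frac{v}{1+v}$ reduces this to the inequality
\begin{equation*}
	\int_0^{\infty} e^{-u}\,\frac{u^2}{u^2 + 2s^2}\,\mathrm{d}u \;\geq\; \frac{1}{1 + \pi s^2},
\end{equation*}
which is plausible for $s \geq 1$ (asymptotically the left side is $\sim s^{-2}$ versus $\sim (\pi s^2)^{-1}$ on the right, so it holds with a factor $\pi$ to spare), but your sketch does not prove it; note that the cruder truncation to $u \geq c$ cannot work, since it would require $c^2 e^{-c} \geq 2/\pi$ while $\sup_{c>0} c^2 e^{-c} = 4e^{-2} < 2/\pi$. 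So either establish that integral inequality directly or, more simply, import the standard estimate $\erfc(t) \leq \frac{2e^{-t^2}}{\sqrt{\pi}\,(t + \sqrt{t^2 + 4/\pi})}$, which is exactly how the paper's \cref{lemma:xerfcx-ub} is proved; with that in hand your argument delivers the claimed prefactor.
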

Finally, we establish a uniform deviation inequality around the bound furnished
by~\cref{lemma:aiming-lb}.
\begin{proposition}
	\label{prop:aiming-unif-lb}
	The following holds with probability at least $1 - 2e^{-d}$:
	\begin{equation}
		\min_{x \in \cB(\bm{0}; 3 \norm{\sol}) \setminus \set{\sol, \bm{0}}}
		\frac{\ip{\partial f(x), x - \sol}}{\norm{x - \sol}}
		\geq \frac{1}{4 \sqrt{\pi}\left(1 + 9 \pi \norm{\sol}^2\right)},
		\label{eq:aiming-unif-lb}
	\end{equation}
	as long as $m \gtrsim d \cdot \norm{\sol}^4$.
\end{proposition}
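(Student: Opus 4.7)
The plan is to upgrade the in-expectation lower bound of \cref{lemma:aiming-lb} to a high-probability bound that holds uniformly over $\cB(\bm{0}; 3 \norm{\sol}) \setminus \set{\sol, \bm{0}}$. First, with probability one, for all $x$ outside the measure-zero set $H := \bigcup_{i = 1}^{m} \set{z \colon \ip{a_i, z} = 0}$, the second sum in the expression of \cref{lemma:aiming-inner-product} vanishes, so that $\ip{\partial f(x), x - \sol}$ reduces to the single value
\begin{equation*}
	F(x) := \frac{1}{m} \sum_{i = 1}^{m} e^{-\ip{a_i, x}} \abs{\ip{a_i, x - \sol}} \1\set{\ip{a_i, x} > 0}.
\end{equation*}
Upper semicontinuity of the Clarke subdifferential lets a lower bound on $F(x)/\norm{x - \sol}$ on the complement of $H$ extend to all of $\cB(\bm{0}; 3 \norm{\sol}) \setminus \set{\sol, \bm{0}}$, so it suffices to work with $F$.

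\paragraph{Pointwise concentration.} At a fixed $x$, each summand of $F(x)$ is nonnegative and dominated by $\abs{\ip{a_i, x - \sol}}$, which is subgaussian with $\psi_2$-norm proportional to $\norm{x - \sol}$. A standard subgaussian Bernstein bound then gives
\begin{equation*}
	\prob{F(x) < \expec{F(x)} - \delta \norm{x - \sol}} \leq 2 \expfun{-c m \delta^{2}}.
\end{equation*}
Taking $\delta$ of order $1/\norm{\sol}^{2}$ makes the deviation at most half of the lower bound furnished by \cref{lemma:aiming-lb}, with failure probability at most $2 \expfun{-c' m/\norm{\sol}^{4}}$; this is already consistent with the sample budget $m \gtrsim d \norm{\sol}^{4}$ stated in the proposition.

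\paragraph{Covering and oscillation.} I would then promote the pointwise bound to a uniform one via a dyadic peeling of $\cB(\bm{0}; 3 \norm{\sol})$: cover each annular shell $\set{x \colon 2^{-k} \norm{\sol} \leq \norm{x - \sol} \leq 2^{-k+1} \norm{\sol}}$ by an $\epsilon_k$-net with $\epsilon_k$ proportional to $2^{-k}$, up to polynomial factors in $\norm{\sol}$, and take the union over $k$. The total net size stays polynomial in $d$ and $\norm{\sol}$, so the union bound over the pointwise tail estimate is still compatible with $m \gtrsim d \norm{\sol}^{4}$. For a general $x$ in a shell, I would bound the oscillation $\abs{F(x) - F(\bar x)}$ to the nearest net point $\bar x$ by (i) a Lipschitz contribution from indices where the indicator $\1\set{\ip{a_i, \cdot} > 0}$ agrees at $x$ and $\bar x$, controlled by $\max_i \norm{a_i}$, and (ii) a jump contribution from indices where the indicator flips, bounded by $\frac{1}{m} \abs{\set{i \colon \abs{\ip{a_i, x}} \leq \norm{a_i} \epsilon_k}} \cdot \max_i \abs{\ip{a_i, \sol}}$. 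The flip count admits its own concentration estimate of order $m \epsilon_k / \norm{x} + \sqrt{m d}$ via anti-concentration of $\ip{a_i, x}$ plus an auxiliary net on the sphere.

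\paragraph{Main obstacle.} The delicate point is the oscillation analysis: the target deviation must scale linearly with $\norm{x - \sol}$, and therefore shrinks as $x \to \sol$, so a non-adaptive net would force a prohibitively fine resolution. The dyadic peeling above sidesteps this, but requires jointly controlling the Lipschitz contribution from agreeing indices and the number of indicator flips at each scale while keeping the sample size at $m \gtrsim d \norm{\sol}^{4}$. Once this bookkeeping is carried out, the proof reduces to a routine combination of \cref{lemma:aiming-lb} with the pointwise tail estimate above.
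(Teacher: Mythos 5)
Your reduction to the single-valued quantity $F(x)$ off the null set $H$ and the pointwise lower-tail bound $\prob{F(x) < \expec{F(x)} - \delta\norm{x-\sol}} \leq 2\expfun{-cm\delta^2}$ are both fine and consistent with what is needed. The gap is exactly at the step you flag as the ``main obstacle'': the dyadic-peeling-plus-nets scheme, as described, does not close. Within a shell at distance $\sim 2^{-k}\norm{\sol}$ from $\sol$, your oscillation bound contains additive pieces that do \emph{not} scale down with the shell radius: the flipped-indicator contribution is controlled by jumps of size up to $\max_i \abs{\ip{a_i, \sol}} \sim \norm{\sol}\sqrt{\log m}$ multiplied by a flip count whose empirical fluctuations (needed uniformly over the net, hence at level $\log N_k \sim d\log(\cdot)$) contribute terms of order $\frac{d \log(\cdot)}{m}\norm{\sol}\sqrt{\log m}$ that stay constant as $k \to \infty$, while the target deviation $\mu\norm{x-\sol} \sim 2^{-k}\norm{\sol}/\norm{\sol}^2$ vanishes. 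So the argument cannot be pushed to shells accumulating at $\sol$; relatedly, the per-shell failure probability is bounded below uniformly in $k$ (net cardinality per shell is constant), so ``take the union over $k$'' over infinitely many shells is not summable. Even on the scales where it does work, the net resolution must be finer than the shell radius by factors of $\sqrt{d}$ and $\mathrm{poly}(\norm{\sol})$, and the resulting union bound costs $d\log d$-type factors, so you would only recover $m \gtrsim d\log(d)\norm{\sol}^4$ rather than the stated $m \gtrsim d\norm{\sol}^4$ with probability $1-2e^{-d}$.

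The missing idea, which is how the paper proceeds, is to homogenize before uniformizing: since every surviving term has $\ip{a_i, u} \geq 0$ with $u = x/\norm{x}$ and $\norm{x} \leq 3\norm{\sol}$, one has $e^{-\ip{a_i,x}} \geq e^{-3\norm{\sol}\ip{a_i,u}}$, so $F(x)/\norm{x-\sol}$ is bounded below by the process $Z(u,v) = \frac{1}{m}\sum_i e^{-3\norm{\sol}\ip{a_i,u}}\abs{\ip{a_i,v}}\1\set{\ip{a_i,u} \geq 0}$ indexed only by unit vectors $(u,v) \in \Sbb^{d-1}\times\Sbb^{d-1}$, with $v = (\sol-x)/\norm{\sol-x}$. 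This index set is compact and scale-free, so the shrinking-deviation and infinite-shell problems disappear, and a single uniform concentration argument suffices; the paper verifies a mixed-tail (Bernstein-type) increment condition in the $\psi_1$ norm and applies generic chaining with Dudley entropy estimates, yielding a uniform deviation of order $(1+3\norm{\sol})\bigl(\sqrt{d/m} + d/m\bigr)$ with no logarithmic losses, which combined with \cref{lemma:aiming-lb} gives \eqref{eq:aiming-unif-lb} under $m \gtrsim d\norm{\sol}^4$. Without this homogenization (or some equivalent device), your outline does not yield the proposition as stated.
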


\section{Convergence analysis}
\label{sec:convergence}
In this section, we analyze the convergence of~\cref{alg:polyaksgm}.
First, we denote
\begin{equation}
	\overline{\rho} := \eta \left(\frac{\mu}{\lip}\right)^2 \frac{1}{40 \sqrt{\pi} \norm{\sol}}, \quad
	\rho := \eta \left(\frac{\mu}{\lip}\right)^2, \quad
	\text{and} \quad
	T_{0} := \ceil{\frac{\log(2)}{\rhoslow}}
	\label{eq:convergence-shorthand}
\end{equation}
for brevity. A quick discussion about their roles in the convergence analysis is in order:
\begin{itemize}
	\item $\overline{\rho}$ is the contraction factor achieved while $\norm{x_t - \sol} \geq \frac{1}{2} \norm{\sol}$.
	\item $\rho > \overline{\rho}$ is the contraction factor achieved after $\norm{x_t - \sol} < \frac{1}{2} \norm{\sol}$.
	\item $T_0$ is an upper bound on the number of iterations elapsed until $\norm{x_t - \sol} < \frac{1}{2} \norm{\sol}$.
\end{itemize}
As the above quantities might suggest, our convergence analysis is split into two phases. Crucially,
\cref{alg:polyaksgm} \textbf{does not}
employ a different stepsize for the two phases: the distinction is only for theoretical purposes
and -- as demonstrated in~\cref{sec:experiments} -- does not affect the practical behavior of
the method. For that reason, we believe it is just an artifact of our analysis.

We now turn to the analysis of the algorithm. We define the following events:
\begin{subequations}
	\begin{align}
		\cA_{\slow}(t) & := \set{\norm{x_{t+1} - \sol}^2 \leq \left(1 - \overline{\rho} \right) \norm{x_t - \sol}^2}, \\
		\cA_{\fast}(t) & := \set{\norm{x_{t+1} - \sol}^2 \leq (1 - \rho) \norm{x_t - \sol}^2},                        \\
		\cB_{\slow}(t) & := \set{\frac{1}{40\sqrt{\pi}} \leq \norm{x_t} \leq 2 \norm{\sol}},                          \\
		\cB_{\fast}(t) & := \set{\frac{1}{2} \norm{\sol} \leq \norm{x_t} \leq 2 \norm{\sol}}.
	\end{align}
\end{subequations}
We also recall that $x_{0} = \bm{0}$ and $f_{\star} = 0$ throughout.
Our analysis is inductive: initially, we note that~\cref{lemma:subgradient-correlation-at-0} implies the following:
\begin{equation}
	\norm{x_1 - \sol}^2 \leq \left(1 - \frac{1}{20 \sqrt{\pi} \norm{\sol}}\right) \norm{\sol}^2.
	\label{eq:x1-distance-reminder}
\end{equation}
We now turn to a sequence of supporting Lemmas. The first one shows that the norms
of the iterates remain bounded while the algorithm is in its ``slow'' phase.
\begin{lemma}
	\label{lemma:Aslow-implies-Bslow}
	We have that $\set{\cA_{\slow}(j)}_{j \leq t} \implies \cB_{\slow}(t+1)$.
\end{lemma}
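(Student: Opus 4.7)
My plan is a short bootstrap argument that derives both bounds in $\cB_{\slow}(t+1)$ from the first-step estimate \eqref{eq:x1-distance-reminder} and the monotone non-increase of $\|x_j - \sol\|$ implied by the hypotheses $\{\cA_{\slow}(j)\}_{j \leq t}$. Chaining the contractions on top of the first-step estimate gives the key inequality
\[
\|x_{t+1} - \sol\|^2 \leq \|x_1 - \sol\|^2 \leq \|\sol\|^2 - \frac{\|\sol\|}{20\sqrt{\pi}},
\]
from which both desired bounds will follow using only the triangle inequality and a one-line square-root concavity estimate.

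The upper bound is immediate: by the triangle inequality, $\|x_{t+1}\| \leq \|x_{t+1} - \sol\| + \|\sol\| \leq 2\|\sol\|$, since the display above gives $\|x_{t+1} - \sol\| \leq \|\sol\|$.

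For the lower bound, I would apply the reverse triangle inequality $\big(\|x_{t+1}\| - \|\sol\|\big)^2 \leq \|x_{t+1} - \sol\|^2$ and invert the square root using the elementary bound $\sqrt{a^2 - b} \leq a - \frac{b}{2a}$, which holds whenever $0 \leq b \leq a^2$. Applying this with $a = \|\sol\|$ and $b = \frac{\|\sol\|}{20\sqrt{\pi}}$ --- admissible because the standing assumption $\|\sol\| \geq 1$ readily implies $b \leq a^2$ --- yields $\big|\|x_{t+1}\| - \|\sol\|\big| \leq \|\sol\| - \frac{1}{40\sqrt{\pi}}$, which rearranges to $\|x_{t+1}\| \geq \frac{1}{40\sqrt{\pi}}$ as required. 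The appearance of exactly $\frac{1}{40\sqrt{\pi}}$ in the definition of $\cB_{\slow}$ is then no coincidence: it is precisely the factor-of-two concavity loss applied to the first-step distance improvement of $\frac{1}{20\sqrt{\pi}\|\sol\|}$ supplied by \eqref{eq:x1-distance-reminder}.

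There is no substantive obstacle to this proof; once the first-step estimate and the chained contraction are combined, the argument reduces to two applications of the (reverse) triangle inequality together with one scalar inequality. The only piece requiring any care is the admissibility condition for the square-root estimate, which follows trivially from $\|\sol\| \geq 1$.
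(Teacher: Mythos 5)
Your proof is correct and follows essentially the same route as the paper: chain the contractions from $\{\cA_{\slow}(j)\}_{j\le t}$ down to the first-step estimate \eqref{eq:x1-distance-reminder}, get the upper bound by the triangle inequality, and get the lower bound $\frac{1}{40\sqrt{\pi}}$ by (reverse) triangle inequality plus the concavity estimate $\sqrt{a^2-b}\le a-\frac{b}{2a}$, which is exactly the paper's $\sqrt{1-x}\le 1-\frac{x}{2}$ step. The only cosmetic difference is that the paper phrases the reverse triangle inequality through $\|x_{t+1}\|\ge \frac{1}{\|\sol\|}\ip{\sol,x_{t+1}}$ and Cauchy--Schwarz, which is algebraically identical to your argument.
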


The next Lemma shows that the algorithm continues making progress (at a rate
depending on $\rhoslow$) while
the iterates remain within the tube $\cB(\bm{0}; 2 \norm{\sol}) \setD \cB(\bm{0}; \sfrac{1}{40 \sqrt{\pi}})$.
\begin{lemma}
	\label{lemma:allslow-implies-Aslow}
	We have that $\set{\cA_{\slow}(j)_{j < t}, \cB_{\slow}(t)} \implies \cA_{\slow}(t)$ for any $\eta \leq \frac{\mu}{\mathsf{L}}$
	and $t \geq 1$.
\end{lemma}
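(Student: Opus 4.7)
}
The plan is to run the standard one-step Polyak analysis and then combine the three regularity ingredients already in hand: aiming (\cref{prop:aiming-unif-lb}), Lipschitz continuity (\cref{prop:lipschitz}), and the solvability lemma (\cref{lemma:sharpness-from-aiming}). We work on the intersection of the high-probability events on which these results hold.

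Expanding $x_{t+1} = x_t - \eta \frac{f(x_t)}{\norm{v_t}^2} v_t$ (recall $f_\star = 0$) gives
\[
\norm{x_{t+1} - \sol}^2 = \norm{x_t - \sol}^2 + \frac{\eta f(x_t)}{\norm{v_t}^2}\Bigl[-2\ip{v_t, x_t - \sol} + \eta f(x_t)\Bigr].
\]
To control the bracket, first I use Lipschitz continuity ($f(x_t) \leq \lip \norm{x_t - \sol}$) on the quadratic term, then aiming ($\ip{v_t, x_t - \sol} \geq \mu \norm{x_t - \sol}$) on the cross term, to arrive at a coefficient $(-2\mu + \eta\lip) \norm{x_t - \sol}$. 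Under the hypothesis $\eta \leq \mu/\lip$ this is at most $-\mu \norm{x_t - \sol}$, yielding
\[
\norm{x_{t+1} - \sol}^2 \leq \norm{x_t - \sol}^2 - \frac{\eta \mu f(x_t)}{\norm{v_t}^2} \norm{x_t - \sol}.
\]
The next step is to apply~\cref{lemma:sharpness-from-aiming} to lower bound $f(x_t) \geq \mu \min\set{\norm{x_t - \sol}, \norm{x_t}}$ and to use $\norm{v_t} \leq \lip$ from~\cref{prop:lipschitz}, giving
\[
\norm{x_{t+1} - \sol}^2 \leq \norm{x_t - \sol}^2 - \frac{\eta \mu^2}{\lip^2} \min\set{\norm{x_t - \sol}, \norm{x_t}} \norm{x_t - \sol}.
\]

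To validate that these regularity inequalities can be applied at $x_t$, I use the two inductive hypotheses. The event $\cB_{\slow}(t)$ provides $\tfrac{1}{40\sqrt\pi} \leq \norm{x_t} \leq 2\norm{\sol}$, which in particular places $x_t$ in $\cB(\bm 0; 3\norm{\sol}) \setminus \set{\bm 0}$ where aiming holds. Meanwhile, the chain of events $\cA_{\slow}(j)$ for $j < t$ makes $\norm{x_j - \sol}^2$ monotonically nonincreasing, so together with the initialization bound~\eqref{eq:x1-distance-reminder} we obtain $\norm{x_t - \sol} \leq \norm{\sol}$; this both places $x_t$ in $\cB(\sol; \norm{\sol})$ where the solvability lemma applies and will be used momentarily to control the ratio $\norm{x_t - \sol}/\norm{\sol}$.

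The final step is a two-case analysis of the $\min\set{\cdot,\cdot}$. If $\norm{x_t - \sol} \leq \norm{x_t}$, the minimum equals $\norm{x_t - \sol}$ and we directly obtain a contraction factor of $1 - \eta (\mu/\lip)^2 = 1 - \rho < 1 - \rhoslow$. If instead $\norm{x_t} < \norm{x_t - \sol}$, the lower bound $\norm{x_t} \geq \tfrac{1}{40\sqrt\pi}$ from $\cB_{\slow}(t)$ together with $\norm{x_t - \sol} \leq \norm{\sol}$ yields
\[
\norm{x_{t+1} - \sol}^2 \leq \norm{x_t - \sol}^2 \left(1 - \frac{\eta \mu^2}{\lip^2 \cdot 40\sqrt{\pi}\,\norm{x_t - \sol}}\right) \leq \left(1 - \rhoslow\right) \norm{x_t - \sol}^2,
\]
which is exactly $\cA_{\slow}(t)$. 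The only delicate point in the argument is ensuring that the induction provides $\norm{x_t - \sol} \leq \norm{\sol}$ so that the $\min$-term, when dominated by $\norm{x_t}$, still produces the prescribed $1/\norm{\sol}$ scaling in $\rhoslow$; this is why the contracting hypotheses $\cA_{\slow}(j)_{j<t}$ (and not merely $\cB_{\slow}(t)$) are needed.
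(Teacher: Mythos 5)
Your proof is correct and follows essentially the same route as the paper's: the identical one-step Polyak expansion, aiming plus $\eta \leq \mu/\lip$ to absorb the quadratic term, the bound $\norm{v_t}\leq\lip$, and \cref{lemma:sharpness-from-aiming} to lower bound $f(x_t)$ by $\mu\min\set{\norm{x_t-\sol},\norm{x_t}}$. The only (cosmetic) difference is the last step: the paper invokes its auxiliary \cref{lemma:minimum-irrelevant}, re-deriving the lower bound on $\norm{x_t}$ from the contraction chain, whereas you use the bound $\norm{x_t}\geq\frac{1}{40\sqrt{\pi}}$ already supplied by the hypothesis $\cB_{\slow}(t)$ together with $\norm{x_t-\sol}\leq\norm{\sol}$ in a two-case analysis; both yield the same factor $\rhoslow$.
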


The forthcoming Lemmas describe the behavior of the algorithm once iterates are
in the ball $\cB(\sol; \frac{1}{2} \norm{\sol})$. \Cref{lemma:Aslow-until-T0-implies-Bfast}
shows that the algorithm indeed enters this region after sufficient progress;
the remaining Lemmas mirror~\cref{lemma:Aslow-implies-Bslow,lemma:allslow-implies-Aslow}.
\begin{lemma}
	\label{lemma:Aslow-until-T0-implies-Bfast}
	We have $\set{\cA_{\slow}(j)}_{j < T_0} \implies \cB_{\fast}(T_0)$.
\end{lemma}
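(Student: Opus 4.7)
The plan is to telescope the per-step contractions supplied by $\set{\cA_{\slow}(j)}_{j < T_0}$, exploit the defining relation for $T_0$ to control the cumulative factor, and then extract the two-sided norm bound of $\cB_{\fast}(T_0)$ via the triangle inequality.

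First, using $x_0 = \bm{0}$ so that $\norm{x_0 - \sol} = \norm{\sol}$, I would iterate the inequality $\norm{x_{j+1} - \sol}^2 \leq (1-\rhoslow)\norm{x_j - \sol}^2$ across $j = 0, 1, \ldots, T_0 - 1$, obtaining
\[
	\norm{x_{T_0} - \sol}^2 \leq (1-\rhoslow)^{T_0}\norm{\sol}^2.
\]
Invoking $1 - \rhoslow \leq \exp(-\rhoslow)$ together with the defining relation $T_0 \geq \log(2)/\rhoslow$ would collapse the cumulative contraction factor, so that $\norm{x_{T_0} - \sol}$ is strictly smaller than $\norm{\sol}$. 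I would then apply the triangle inequality in both directions: the upper bound $\norm{x_{T_0}} \leq \norm{\sol} + \norm{x_{T_0} - \sol} \leq 2\norm{\sol}$ is immediate, and the reverse triangle inequality yields the matching lower bound $\norm{x_{T_0}} \geq \norm{\sol} - \norm{x_{T_0} - \sol} \geq \tfrac{1}{2}\norm{\sol}$ provided the distance has been driven below $\tfrac{1}{2}\norm{\sol}$.

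The main technical point, and where a little extra care is required, is ensuring that the telescoped distance actually drops below $\tfrac{1}{2}\norm{\sol}$ rather than merely below $\norm{\sol}/\sqrt{2}$. I would close this numerical gap by isolating the $j = 0$ factor in the telescope and replacing the generic $(1-\rhoslow)$-contraction there with the strictly sharper first-step estimate $\norm{x_1 - \sol}^2 \leq \bigl(1 - \eta/(10\sqrt{\pi}\norm{\sol})\bigr)\norm{\sol}^2$ supplied by \cref{lemma:subgradient-correlation-at-0}; since that first-step factor is substantially smaller than $1 - \rhoslow$ (note the absence of the $(\mu/\lip)^2$ multiplier in $\rhoslow$), chaining it with the remaining $T_0 - 1$ slow-phase contractions and exploiting the ceiling built into $T_0$ tips the cumulative factor below $\tfrac{1}{4}$ and therefore gives $\norm{x_{T_0} - \sol} \leq \tfrac{1}{2}\norm{\sol}$. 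Once that bound is in place, both sides of $\cB_{\fast}(T_0)$ follow immediately from the triangle inequality as above.
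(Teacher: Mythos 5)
Your skeleton is the same as the paper's: telescope the $\cA_{\slow}$ contractions from $x_0=\bm{0}$, bound $(1-\rhoslow)^{T_0}$ via $1-\rhoslow\le e^{-\rhoslow}$ and the definition of $T_0$, then extract the lower bound $\norm{x_{T_0}}\ge\norm{\sol}-\norm{x_{T_0}-\sol}>\tfrac12\norm{\sol}$ and take the upper bound $\norm{x_{T_0}}\le 2\norm{\sol}$ (the paper cites \cref{lemma:Aslow-implies-Bslow} for the latter; your triangle-inequality version is equivalent). You also correctly identified the real tension: with $T_0=\ceil{\log(2)/\rhoslow}$ as defined in \eqref{eq:convergence-shorthand}, plain telescoping only gives a cumulative factor of $\tfrac12$, i.e.\ $\norm{x_{T_0}-\sol}\le\norm{\sol}/\sqrt{2}$, while $\cB_{\fast}(T_0)$ needs the factor $\tfrac14$. (The paper's own proof silently assumes $T_0\rhoslow\ge\log 4$, which is consistent only if $T_0$ is read as $\ceil{\log(4)/\rhoslow}$ — a constant-level mismatch with \eqref{eq:convergence-shorthand} that you were right to notice.)

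The gap is in your proposed repair: it does not close the factor-of-two deficit. The first-step bound \eqref{eq:x1-distance-reminder} contributes a factor $1-\tfrac{1}{20\sqrt{\pi}\norm{\sol}}$, which is indeed a much stronger contraction than a single slow-phase step, but it is $1-\Theta(1/\norm{\sol})$ and hence not bounded away from $1$ uniformly in $\norm{\sol}$. Chaining it with the remaining $T_0-1$ slow steps yields a cumulative factor of roughly $\tfrac12\bigl(1-\tfrac{1}{20\sqrt{\pi}\norm{\sol}}\bigr)e^{\rhoslow}$, which is about $0.49$ already at $\norm{\sol}=1$ and tends to $\tfrac12$ as $\norm{\sol}$ grows; the ceiling in $T_0$ only buys one extra factor of $(1-\rhoslow)$, which is negligible since $\rhoslow$ is tiny. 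So the claim that the cumulative factor drops below $\tfrac14$ is false, and the reverse triangle inequality then only gives $\norm{x_{T_0}}\ge(1-1/\sqrt{2})\norm{\sol}\approx 0.29\,\norm{\sol}$, short of the required $\tfrac12\norm{\sol}$. The honest fix is the one implicit in the paper's proof: enlarge $T_0$ by a constant (take $T_0$ so that $T_0\rhoslow\ge\log 4$, e.g.\ $T_0=\ceil{\log(4)/\rhoslow}$, which changes nothing downstream except constants), rather than trying to extract the missing factor of $2$ from the first step.
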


\begin{lemma}
	\label{lemma:Allfast-implies-Afast}
	We have that $\set{\cA_{\fast}(j)_{T_0 \leq j < t}, \cB_{\fast}(t)} \implies \cA_{\fast}(t)$
	for any $\eta \leq \frac{\mu}{\lip}$ and $t \geq T_0$.
\end{lemma}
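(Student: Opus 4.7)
The plan is a standard one-step Polyak descent analysis in the fast phase, paralleling the slow-phase argument of \cref{lemma:allslow-implies-Aslow} but replacing the truncated sharp growth $\mu \min\{\norm{x_t - \sol}, \norm{x_t}\}$ with the clean linear lower bound $\mu \norm{x_t - \sol}$ that becomes available once the iterate has crossed the halfway mark toward $\sol$.

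First I would verify that all of the regularity machinery from \cref{sec:regularity-properties} applies at $x_t$. The event $\cB_{\fast}(t)$ places $x_t$ in the annulus $\frac{1}{2}\norm{\sol} \leq \norm{x_t} \leq 2\norm{\sol}$, so in particular $x_t \in \cB(\bm{0}; 3\norm{\sol}) \setminus \set{\bm{0}}$ and both the aiming inequality \eqref{eq:aiming-abstract} and the subgradient norm bound $\norm{v_t} \leq \lip$ from \cref{prop:lipschitz} are available. Iterating $\cA_{\fast}(j)$ backward from $j=t-1$ to $j=T_0$ yields $\norm{x_t - \sol} \leq \norm{x_{T_0} - \sol}$, and the preceding slow-phase contractions together with the choice $T_0 = \ceil{\log(2)/\rhoslow}$ guarantee $\norm{x_{T_0} - \sol} \leq \frac{1}{2}\norm{\sol}$. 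Combined with $\norm{x_t} \geq \frac{1}{2}\norm{\sol}$, this gives both $x_t \in \cB(\sol; \norm{\sol})$ and $\norm{x_t - \sol} \leq \norm{x_t}$, so the $\min$ in \cref{lemma:sharpness-from-aiming} simplifies and yields the crucial bound $f(x_t) \geq \mu\norm{x_t - \sol}$.

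Next I would expand the squared distance after one Polyak step and substitute aiming in the cross term:
\begin{equation*}
\norm{x_{t+1} - \sol}^2 = \norm{x_t - \sol}^2 - 2\eta \frac{f(x_t) \ip{v_t, x_t - \sol}}{\norm{v_t}^2} + \eta^2 \frac{f(x_t)^2}{\norm{v_t}^2} \leq \norm{x_t - \sol}^2 + \frac{\eta f(x_t)}{\norm{v_t}^2}\bigl(\eta f(x_t) - 2\mu \norm{x_t - \sol}\bigr).
\end{equation*}
Lipschitz continuity yields $f(x_t) \leq \lip \norm{x_t - \sol}$, and the stepsize hypothesis $\eta \leq \mu/\lip$ then forces $\eta f(x_t) \leq \mu \norm{x_t - \sol}$, so the bracket is at most $-\mu \norm{x_t - \sol}$. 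Applying sharp growth ($f(x_t) \geq \mu \norm{x_t - \sol}$) together with $\norm{v_t} \leq \lip$ collapses the estimate to
\begin{equation*}
\norm{x_{t+1} - \sol}^2 \leq \norm{x_t - \sol}^2 - \frac{\eta \mu^2 \norm{x_t - \sol}^2}{\lip^2} = (1 - \rho) \norm{x_t - \sol}^2,
\end{equation*}
which is exactly $\cA_{\fast}(t)$.

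The main obstacle is the bookkeeping in the first step: one has to leverage both the chain of prior fast-phase contractions and the quantitative exit conditions from the slow phase to conclude $\norm{x_t - \sol} \leq \norm{x_t}$, upgrading the truncated sharp growth of \cref{lemma:sharpness-from-aiming} to the unconstrained linear form. Once this reduction is in place, the rest is a routine Polyak-step estimate, made nonconvex-safe by aiming standing in for the subgradient inequality.
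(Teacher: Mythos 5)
Your proposal is correct and follows essentially the same route as the paper's proof: establish $\norm{x_t - \sol} \leq \tfrac{1}{2}\norm{\sol} \leq \norm{x_t}$ so that \cref{lemma:sharpness-from-aiming} yields the clean bound $f(x_t) \geq \mu\norm{x_t - \sol}$, then run the standard Polyak one-step estimate with the aiming inequality, the Lipschitz bound, and $\eta \leq \mu/\lip$ to obtain the $(1-\rho)$ contraction. The only cosmetic difference is that the paper bounds $\norm{x_{T_0}-\sol}$ directly via $\tfrac{1}{4}\norm{\sol}^2$ inside the fast-phase chain, which is exactly the bookkeeping you describe.
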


\begin{lemma}
	\label{lemma:Afast-implies-Bfast}
	We have $\set{\cA_{\fast}(j)}_{T_0 \leq j \leq t} \implies \cB_{\fast}(t+1)$.
\end{lemma}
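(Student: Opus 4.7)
The plan is to combine the monotone contraction of distance to $\sol$ furnished by the fast events with two applications of the triangle inequality, using the base case of the induction supplied by the preceding lemma. Concretely, under the hypothesis $\set{\cA_{\fast}(j)}_{T_0 \leq j \leq t}$, iterating the one-step bound $\norm{x_{j+1}-\sol}^2 \leq (1-\rho)\norm{x_j-\sol}^2$ gives
\begin{equation}
	\norm{x_{t+1}-\sol}^2 \leq (1-\rho)^{t+1-T_0}\norm{x_{T_0}-\sol}^2 \leq \norm{x_{T_0}-\sol}^2.
\end{equation}
Hence the entire task reduces to establishing a suitable base-case bound on $\norm{x_{T_0}-\sol}$.

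To obtain that base case, I would revisit the proof of~\cref{lemma:Aslow-until-T0-implies-Bfast}. Chaining the slow-phase contractions $\set{\cA_{\slow}(j)}_{j < T_0}$ starting from the improved first-iterate bound~\eqref{eq:x1-distance-reminder} yields
\begin{equation}
	\norm{x_{T_0}-\sol}^2 \leq (1-\rhoslow)^{T_0-1}\left(1 - \frac{\eta}{10\sqrt{\pi}\norm{\sol}}\right)\norm{\sol}^2,
\end{equation}
and the defining choice $T_0 = \ceil{\log(2)/\rhoslow}$, together with the extra factor arising from~\eqref{eq:x1-distance-reminder}, should be enough to guarantee $\norm{x_{T_0}-\sol} \leq \tfrac{1}{2}\norm{\sol}$. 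Combined with the display above, this gives $\norm{x_{t+1}-\sol} \leq \tfrac{1}{2}\norm{\sol}$ for every $t \geq T_0$.

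From here the two-sided norm bound in $\cB_{\fast}(t+1)$ is immediate from the triangle inequality: on the one hand
\begin{equation}
	\norm{x_{t+1}} \geq \norm{\sol} - \norm{x_{t+1}-\sol} \geq \tfrac{1}{2}\norm{\sol},
\end{equation}
and on the other
\begin{equation}
	\norm{x_{t+1}} \leq \norm{\sol} + \norm{x_{t+1}-\sol} \leq \tfrac{3}{2}\norm{\sol} \leq 2\norm{\sol},
\end{equation}
which together are precisely the event $\cB_{\fast}(t+1)$.

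The main obstacle I expect is cleanly producing the base-case distance bound $\norm{x_{T_0}-\sol} \leq \tfrac{1}{2}\norm{\sol}$: the stated conclusion of~\cref{lemma:Aslow-until-T0-implies-Bfast} only controls $\norm{x_{T_0}}$ (via the annulus definition of $\cB_{\fast}$) and does not by itself certify closeness to $\sol$. So either~\cref{lemma:Aslow-until-T0-implies-Bfast} must be restated (or its proof reused) to output the stronger distance bound that it in fact establishes en route, or one must reprove the relevant chain here using~\eqref{eq:x1-distance-reminder} and the numerology of $T_0$. Once that bookkeeping is settled, the rest of the argument is a one-line triangle inequality.
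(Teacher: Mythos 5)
Your route is essentially the paper's own: the paper likewise chains the fast contractions back to $T_0$, imports the bound $\norm{x_{T_0}-\sol}^2 < \frac{1}{4}\norm{\sol}^2$ that is established \emph{inside} the proof of~\cref{lemma:Aslow-until-T0-implies-Bfast} (not in its statement), and then converts the distance bound into the two-sided norm bound exactly as you propose --- the lower bound via the Cauchy--Schwarz/reverse-triangle argument of~\cref{lemma:Aslow-until-T0-implies-Bfast} and the upper bound via the triangle inequality as in~\cref{lemma:Aslow-implies-Bslow}. So the ``main obstacle'' you flag is precisely what the paper resolves silently by reusing the intermediate estimate from the earlier proof. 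One caveat about your proposed repair of the base case: with the literal definition $T_0 = \ceil{\log(2)/\rhoslow}$, chaining $(1-\rhoslow)^{T_0-1}$ against the extra factor from~\eqref{eq:x1-distance-reminder} only yields roughly $\norm{x_{T_0}-\sol}^2 \leq \frac{1}{2}\norm{\sol}^2$, i.e.\ $\norm{x_{T_0}-\sol} \approx \norm{\sol}/\sqrt{2}$, which is not enough for your final step: the reverse triangle inequality would then only give $\norm{x_{t+1}} \geq (1-1/\sqrt{2})\norm{\sol}$, short of $\frac{1}{2}\norm{\sol}$. The paper reaches the needed $\frac{1}{4}$ by asserting $T_0\rhoslow \geq \log(4)$ in the proof of~\cref{lemma:Aslow-until-T0-implies-Bfast}, which is consistent only if $T_0$ is read as $\ceil{\log(4)/\rhoslow}$ (the $\log(2)$ in~\eqref{eq:convergence-shorthand} appears to be a typo); so the quantitative wrinkle in your ``should be enough'' step is inherited from the paper rather than introduced by you, and modulo that constant your argument coincides with the paper's.
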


We are now ready to state our main convergence guarantee, which follows trivially
from~\cref{lemma:Aslow-implies-Bslow,lemma:allslow-implies-Aslow,lemma:Aslow-until-T0-implies-Bfast,lemma:Allfast-implies-Afast,lemma:Afast-implies-Bfast}.
To that end, let
\begin{equation}
	\condnum := 8 \sqrt{\pi}(1 + 9 \pi \norm{\sol}^2) \geq \frac{\lip}{\mu}
	\label{eq:condition-number}
\end{equation}
denote a bound on the condition number of the optimization problem. When $\eta = \frac{\mu}{\lip}$,
we have
\begin{equation}
	\rho = \eta \left(\frac{\mu}{\lip}\right)^2 \leq \kappa^{-3}, \;\;
	\overline{\rho} = \rho \cdot \frac{1}{40 \sqrt{\pi} \norm{\sol}} \geq
	\frac{1}{5} \rho \kappa^{-\sfrac{1}{2}}.
	\label{eq:rhofast-vs-rhoslow-vs-condnum}
\end{equation}
\begin{theorem}[Linear convergence of~\texttt{PolyakSGM}]
	\label{theorem:main}
    Let~\cref{assm:measurement-model} hold and suppose that the number of measurements $m \gtrsim d \norm{\sol}^4$, for $\norm{\sol} \geq 1$.
	Then~\cref{alg:polyaksgm} with inputs $x_0 = \bm{0}$, $\eta \leq \frac{1}{\kappa}$, where $\kappa$
	is defined in~\eqref{eq:condition-number}, and $f_{\star} = 0$
	produces a set of iterates $\set{x_i}_{i = 1, \dots}$ converging to $\sol$ at a linear rate:
	\begin{equation}
		\norm{x_{i+1} - \sol}^2 \leq \begin{cases}
			(1 - \frac{1}{5} \eta \kappa^{-\frac{5}{2}} ) \norm{x_{i} - \sol}^2, & i < T_0,    \\
			(1 - \eta \kappa^{-2}) \norm{x_{i} - \sol}^2,                        & i \geq T_0.
		\end{cases}
	\end{equation}
	In particular, using the largest admissible stepsize $\eta = \frac{1}{\kappa}$, \cref{alg:polyaksgm} requires at most
	\[
		\mathsf{T}(\varepsilon) \leq \ceil{7 \kappa^{\frac{7}{2}}} +
		\ceil{2 \kappa^3 \log\left(\frac{\kappa}{\varepsilon}\right)}
		\quad \text{subgradient evaluations}
	\]
	to achieve estimation error $\norm{x_t - \sol} \leq \varepsilon$.
\end{theorem}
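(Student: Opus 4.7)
The plan is to combine the five preparatory lemmas of Section 4 into a two-phase induction, then translate the resulting contraction rates into an iteration-complexity bound. Throughout I would condition on the high-probability events supplied by \cref{lemma:subgradient-correlation-at-0}, \cref{prop:lipschitz}, and \cref{prop:aiming-unif-lb}, whose intersection has probability at least $1 - O(e^{-d})$ when $m \gtrsim d \norm{\sol}^4$; on this event the Lipschitz constant $\lip$, the sharpness constant $\mu$, and the bound $\kappa$ in \eqref{eq:condition-number} are valid simultaneously.

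\textbf{Slow phase ($i < T_0$).} The base case $\cA_{\slow}(0)$ is exactly inequality \eqref{eq:x1-distance-reminder}, which gives a contraction factor of $1 - \frac{1}{20\sqrt{\pi}\norm{\sol}}$, which dominates $\overline\rho$ for the admissible stepsize. For the inductive step, assume $\{\cA_{\slow}(j)\}_{j < t}$ for some $1 \leq t < T_0$. \Cref{lemma:Aslow-implies-Bslow} then yields $\cB_{\slow}(t)$, and \cref{lemma:allslow-implies-Aslow} in turn yields $\cA_{\slow}(t)$ (we are allowed to apply the latter because $\eta \leq 1/\kappa \leq \mu/\lip$). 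Iterating up to $t = T_0 - 1$ produces the entire chain $\{\cA_{\slow}(j)\}_{j < T_0}$.

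\textbf{Fast phase ($i \geq T_0$).} Having established the slow-phase chain, \cref{lemma:Aslow-until-T0-implies-Bfast} delivers the entry condition $\cB_{\fast}(T_0)$. A second induction, now on $t \geq T_0$, establishes $\cA_{\fast}(t)$ for every $t \geq T_0$: assuming $\{\cA_{\fast}(j)\}_{T_0 \leq j < t}$ together with $\cB_{\fast}(t)$, \cref{lemma:Allfast-implies-Afast} gives $\cA_{\fast}(t)$, and \cref{lemma:Afast-implies-Bfast} propagates the ball condition to $\cB_{\fast}(t+1)$, closing the induction.

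\textbf{Iteration complexity.} With $\eta = 1/\kappa$, the bounds in \eqref{eq:rhofast-vs-rhoslow-vs-condnum} specialize to $\overline\rho \geq \tfrac{1}{5} \eta \kappa^{-5/2}$ and $\rho \leq \eta \kappa^{-2}$ (for the contraction statement; the upper bound $\rho \leq \kappa^{-3}$ from \eqref{eq:rhofast-vs-rhoslow-vs-condnum} is what governs the log term). For the first phase I would bound the number of iterations needed to halve the initial distance by a crude $\log$-over-$\log(1-\overline\rho)$ estimate: using $-\log(1-\overline\rho) \geq \overline\rho$, one needs at most $\lceil \log(4)/\overline\rho \rceil \leq \lceil 7 \kappa^{7/2} \rceil$ iterations to reach the fast phase, after which $\norm{x_{T_0} - \sol}^2 \leq \tfrac14 \norm{\sol}^2 \leq \tfrac14 \kappa^2$ (using $\kappa \gtrsim \norm{\sol}^2$). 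For the fast phase, the same type of estimate gives $\lceil 2\kappa^3 \log(\kappa/\varepsilon) \rceil$ iterations to drive the squared error below $\varepsilon^2$. Adding the two budgets yields the stated $\mathsf{T}(\varepsilon)$ bound.

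The structural work is done by the five lemmas; the only real bookkeeping obstacle is tracking the precise constants in \eqref{eq:convergence-shorthand} and \eqref{eq:rhofast-vs-rhoslow-vs-condnum}, in particular ensuring that the factor $\tfrac{1}{5} \kappa^{-1/2}$ relating $\overline\rho$ to $\rho$ survives the substitution $\eta = 1/\kappa$ and produces exactly the $\kappa^{7/2}$ exponent claimed in the first summand of $\mathsf{T}(\varepsilon)$. Once this constant accounting is done, the theorem follows immediately.
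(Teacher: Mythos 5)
Your proposal is correct and is essentially the paper's own argument: the theorem is obtained exactly by chaining \cref{lemma:Aslow-implies-Bslow,lemma:allslow-implies-Aslow,lemma:Aslow-until-T0-implies-Bfast,lemma:Allfast-implies-Afast,lemma:Afast-implies-Bfast} in the two-phase induction you describe (with \eqref{eq:x1-distance-reminder} as the base case and $\eta \leq \kappa^{-1} \leq \mu/\lip$ licensing the lemmas), followed by the constant bookkeeping via \eqref{eq:convergence-shorthand}--\eqref{eq:rhofast-vs-rhoslow-vs-condnum}. The only nit is directional: the contraction and iteration counts require the \emph{lower} bounds $\rho \geq \eta\kappa^{-2}$ and $\rho \geq \kappa^{-3}$ (for $\eta = \kappa^{-1}$), not upper bounds as written in your penultimate paragraph --- a slip that the paper's own display \eqref{eq:rhofast-vs-rhoslow-vs-condnum} arguably shares.
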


\subsection{Adaptation to $\norm{x_{\star}}$}
\label{sec:convergence-adaptive}
In~\cref{theorem:main}, the optimal $\eta$ depends on the norm of the unknown signal $x_{\star}$,
which affects the condition number $\kappa$.
Here, we describe a data-driven approach to estimate that quantity at a constant expense.
Recall the definition of $\condnum$ in~\eqref{eq:condition-number};
the optimal $\eta_{\star}$ in~\cref{theorem:main} is equal to $\frac{1}{\condnum}$.
For any $\condest \geq \condnum$ and $\eta := \condest^{-1}$, we have (during the second stage of the analysis):
\begin{align}
	\norm{x_{t} - \sol}^2 & \leq \left(1 - \eta \condnum^{-2}\right)^{t} \norm{x_0 - \sol}^2             \notag   \\
	                      & = \left(1 - {\condest^{-1} \cdot \condnum^{-2}}\right)^{t} \norm{x_0 - \sol}^2 \notag \\
	                      & \leq \left(1 - \condest^{-3}\right)^t \norm{x_0 - \sol}^2.
	\label{eq:surrogate-bound}
\end{align}
In particular, a standard scheme to adapt $\eta$ to the unknown $\norm{x_{\star}}$ (given a budget of $T$ iterations)
would proceed by setting $\eta_{i} = {\condest_i^{-1}}$, with $\condest_1 = 1$, and doubling
$\condest_{t+1} = 2 \condest_t$ until the estimate~\eqref{eq:surrogate-bound} holds. To bypass
the obvious issue -- namely, that the distances $\norm{x_j - \sol}$ are not available in practice --
we use the following relationship (cf.~\cref{prop:lipschitz,prop:aiming-unif-lb}):
\[
	\frac{1}{\mathsf{L}} f(x_{j}) \leq \norm{x_j - \sol} \leq \frac{1}{\mu} f(x_j),
\]
which leads to the following \emph{computable} upper bound (valid for any $\condest \geq \condnum$):
\begin{equation}
	f(x_t) \leq \condnum \cdot \left(1 - \condest^{-3}\right)^{t/2} f(x_0) \leq
	\condest \left(1 - \condest^{-3}\right)^{t/2} f(x_0).
	\label{eq:surrogate-bound-computable}
\end{equation}
With~\eqref{eq:surrogate-bound-computable} at hand, we describe our adaptation strategy below.
\begin{algorithm}
	\caption{\texttt{AdPolyakSGM}}
	\begin{algorithmic}
		\State \textbf{Input}: $x_0 \in \mathbb{R}^d$, target accuracy $\varepsilon$.
		\State Set $i = 0$, $\condest_{i} = 1$.
		\Repeat
		\State $T_{i} := \ceil{7 \condest_i^{\frac{7}{2}}} + \ceil{2 \cdot \condest_{i}^3 \log \left(\frac{\kappa_i}{\varepsilon}\right)}$ \label{op:iterations}
		\State $x_i := \texttt{PolyakSGM}\left(x_0, \condest_i^{-1}, T_{i}, f_{\star} \equiv 0\right)$
		\Comment{Algorithm~\ref{alg:polyaksgm}}
		\State $\condest_{i+1} = 2 \condest_{i}$; $i \gets i + 1$.
		\Until{$f(x_i) \leq \varepsilon \cdot f(x_0)$}
	\end{algorithmic}
	\label{alg:adpolyak}
\end{algorithm}

\begin{proposition}
	\label{prop:adpolyak-termination}
    Let~\cref{assm:measurement-model} hold and suppose that the number of measurements
    $m \gtrsim d \norm{\sol}^4$, for $\norm{\sol} \geq 1$.
	Then~\cref{alg:adpolyak} with $x_0 = \bm{0}$ terminates after at most $\log_{2}(\condnum)$ invocations
	of~\cref{alg:polyaksgm} and returns
	a point $\widehat{x}$ satisfying $f(\widehat{x}) \leq \varepsilon f(x_0)$.
	Moreover, it uses at most
	\[
		8 \condnum^{\frac{7}{2}} + 3 \condnum^3 \cdot \log\left(\frac{\condnum}{\varepsilon}\right)
		\quad \text{subgradient evaluations}.
	\]
\end{proposition}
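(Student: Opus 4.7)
The plan is to leverage two observations. First, the doubling schedule $\condest_i = 2^{i-1}$ ensures that $\condest_i$ first exceeds the true condition number $\condnum$ at some index $i^{\star} \leq \ceil{\log_2 \condnum} + 1$, after which the stepsize $\eta_i = \condest_i^{-1} \leq \condnum^{-1}$ meets the admissibility requirement of~\cref{theorem:main}. Second, at this index the budget $T_{i^{\star}}$ prescribed by the algorithm is a valid over-estimate of the iteration count demanded by~\cref{theorem:main} with $\eta = \condest_{i^{\star}}^{-1}$: the slow- and fast-phase rates become $\frac{1}{5} \condest_{i^{\star}}^{-1} \condnum^{-5/2}$ and $\condest_{i^{\star}}^{-1} \condnum^{-2}$, so the true phase lengths are of order $\condest_{i^{\star}} \condnum^{5/2}$ and $\condest_{i^{\star}} \condnum^2 \log(\condnum/\varepsilon)$; both are absorbed by $T_{i^{\star}}$ since $\condest_{i^{\star}} \geq \condnum$.

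Given these, the invocation at $i = i^{\star}$ produces an iterate whose distance to $\sol$ lies below the target tolerance. To connect this to the termination test $f(x_{i^{\star}}) \leq \varepsilon f(x_0)$, I would combine~\cref{prop:lipschitz}, giving $f(x) \leq \lip \norm{x - \sol} \leq 2 \norm{x - \sol}$ (since $f(\sol) = 0$ and $\lip \leq 2$ when $m \gtrsim d$), with~\cref{corollary:loss-concentration-simpl}, which furnishes the floor $f(x_0) \geq \frac{1}{5}$ whenever $\norm{\sol} \geq 1$. A small constant rescaling of the $\log$ factor inside $T_{i^{\star}}$ then converts the distance estimate into $f(x_{i^{\star}}) \leq \varepsilon f(x_0)$, so the loop terminates no later than $i = i^{\star}$ after at most $\log_2 \condnum + O(1)$ invocations of~\cref{alg:polyaksgm}.

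For the total subgradient evaluation count, I would sum $T_i$ over $i = 1, \ldots, i^{\star}$. Both $\condest_i^{7/2}$ and $\condest_i^3 \log(\condest_i/\varepsilon)$ grow geometrically in $i$ with ratio at least $8$, so the sum is controlled by its final term up to a multiplicative constant of at most $8/7$. Using $\condest_{i^{\star}} \leq 2 \condnum$ (since we double exactly one step past $\condnum$), this yields $\sum_i T_i \lesssim \condnum^{7/2} + \condnum^3 \log(\condnum/\varepsilon)$, which matches the claimed $8 \condnum^{7/2} + 3 \condnum^3 \log(\condnum/\varepsilon)$ bound once the leading constants are tracked carefully.

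The main obstacle I anticipate is the book-keeping around converting distance-based estimates (produced by~\cref{theorem:main}) into function-value estimates (required by the termination test), together with confirming that running~\cref{alg:polyaksgm} with an overestimated condition number $\condest_i > \condnum$ does not degrade the guarantees beyond what the budget $T_i$ already absorbs. In particular, one must verify that the slow-phase contraction $1 - \frac{1}{5}\eta \condnum^{-5/2}$ of~\cref{theorem:main} is preserved verbatim when the stepsize drops from $\condnum^{-1}$ to $\condest_i^{-1}$ --- which follows from re-inspecting the $\eta$-dependence in the proof --- and that the floor $f(x_0) \geq \frac{1}{5}$ holds simultaneously with the high-probability events already invoked in the analysis of~\cref{theorem:main}.
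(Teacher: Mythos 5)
Your plan is essentially the paper's own argument: double $\condest_i$ until it exceeds $\condnum$, observe that \cref{theorem:main} with the (admissible but suboptimal) stepsize $\eta_i = \condest_i^{-1}$ guarantees success within the prescribed budget $T_i$, and bound the total cost by a geometric sum dominated by its last term. The only real difference is the distance-to-value conversion for the termination test: you use $f(x) \leq \lip\norm{x - \sol}$ together with the floor $f(x_0) \geq \tfrac{1}{5}$ from \cref{corollary:loss-concentration-simpl}, whereas the paper routes through the computable surrogate bound~\eqref{eq:surrogate-bound-computable}, i.e.\ $f(x_t) \leq \condest\,(1-\condest^{-3})^{t/2} f(x_0)$; both are fine (yours is arguably cleaner at $x_0 = \bm{0}$, where the sharpness-based lower bound is vacuous). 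Two points of caution. First, you cannot ``rescale the $\log$ factor inside $T_{i^\star}$'': the budget in \cref{op:iterations} is fixed by \cref{alg:adpolyak}, so you must instead verify that the fixed budget already absorbs the extra factor of $5\lip \leq 10$; it does, because $\condest_{i^\star} \geq \condnum = 8\sqrt{\pi}(1 + 9\pi\norm{\sol}^2) \gg 10\norm{\sol}$ for $\norm{\sol} \geq 1$, so $\log(\condest_{i^\star}/\varepsilon) \geq \log(10\norm{\sol}/\varepsilon)$ and $\condest_{i^\star}^{-1}\condnum^{-2} \geq \condest_{i^\star}^{-3}$. Second, your constant bookkeeping does not deliver the stated bound: with only $\condest_{i^\star} \leq 2\condnum$, the dominant term of the geometric sum is $7\,\condest_{i^\star}^{7/2}/(1 - 2^{-7/2}) \approx 7 \cdot 2^{7/2} \cdot 1.1\, \condnum^{7/2} \approx 90\,\condnum^{7/2}$, far above $8\condnum^{7/2}$. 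The paper obtains $8\condnum^{7/2} + 3\condnum^3\log(\condnum/\varepsilon)$ only by assuming $\condnum$ is a power of two ``for simplicity,'' so that the final estimate equals $\condnum$ exactly; you should either adopt the same convention or state correspondingly larger constants.
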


\begin{remark}
	Note that, since $\condnum = \Theta(\norm{\sol}^2)$, the cost of~\cref{alg:adpolyak} (as
	measured by the total number of subgradient evaluations) is just a constant factor
	worse compared to the cost of running~\cref{alg:polyaksgm} with the optimal stepsize
	$\eta_{\star} = \condnum^{-1}$.
\end{remark}

\section{Numerical experiments}
\label{sec:experiments}
In this section, we evaluate our proposed algorithm in experiments using real
and synthetic data. Our primary goal is to compare the performance of our
method with the gradient-based algorithm introduced in~\cite{FVW+23}; that
work minimizes a squared version of the loss using gradient descent with
constant stepsize:
\begin{subequations}
	\begin{align}
		\cL(x)                & = \frac{1}{2m} \sum_{i = 1}^m \left(1 - \exp(-\ip{a_i, x}_+) - y_i\right)^2, \\
		x_{k+1}^{\texttt{GD}} & = x_{k}^{\texttt{GD}} - \eta_k \cdot \grad \cL(x_k^{\texttt{GD}}),
		\qquad \eta_k = \begin{cases}
			                c_0 \cdot e^{-5 \norm{\sol}},                                                          & k \geq 1, \\
			                \frac{4 e^{-\frac{\norm{\sol}^2}{2}}}{\erfc\left(\frac{\norm{\sol}}{\sqrt{2}}\right)}, & k = 0,
		                \end{cases}
		\label{eq:gd-baseline}
	\end{align}
\end{subequations}
where $c_0$ is a constant and $x_{0}^{\texttt{GD}} = \bm{0}_{d}$ is used to initialize the method.

Our synthetic experiments use two different models for measurement vectors $a_i$:
\begin{description}
	\item[(Gaussian ensemble)] Every $a_i$ satisfies $a_i \sim \cN(0, I_{d})$. This is the setting
	      covered by our theoretical analysis (\cref{theorem:main}).
	      \label{item:model:gaussian}
	\item[(\textsf{RWHT} ensemble)] Every $a_i$ is a row of a randomized Walsh-Hadamard
	      transform with oversampling. Formally, given an oversampling factor $\ell = \frac{m}{d} \in \Nbb$,
	      we have
	      \[
		      A = \begin{bmatrix}
			      a_1^{\T} \\
			      a_2^{\T} \\
			      \vdots   \\
			      a_{m}^{\T}
		      \end{bmatrix}
		      = \begin{bmatrix}
			      H_{d} D_1 \\
			      H_{d} D_2 \\
			      \vdots    \\
			      H_{d} D_{\ell}
		      \end{bmatrix}, \quad
		      \text{where} \quad
		      D_{j} = \mathbf{diag}(\xi_1^{(j)}, \dots, \xi_d^{(j)}), \;\;
		      \xi_i^{(j)} \iid \mathrm{Unif}(\pm 1),
	      \]
	      where $H_{d}$ is the $d \times d$ matrix implementing the Walsh-Hadamard transform.
	      In order for $H_{d}$ to be well-defined, we assume that $d$ is an integer power of $2$
	      for simplicity.

	      The operations $v \mapsto Av$ and $u \mapsto A^{\T} u$ can be implemented
	      using $O(d \log d)$ \texttt{flops} and $O(m)$ memory using the Fast Walsh-Hadamard transform.
	      This allows us to test our method for values of $d$ that
	      prohibit storing a dense $m \times d$ matrix.

	      \label{item:model:rfwht}
\end{description}

\subsection{Comparison of~\texttt{PolyakSGM} and gradient descent}

\subsubsection{Sample efficiency}
We first study the sample efficiency of our method and gradient descent~\eqref{eq:gd-baseline}.
Recall that Theorem 1 in~\cite{FVW+23} provides convergence guarantees for the latter, requiring
\[
	m = \Omega\left(
	\frac{\exp(c \norm{\sol})}{\norm{\sol}^2} \cdot d
	\right) \quad \text{samples},
\]
while our~\cref{theorem:main} only requires
\[
	m = \Omega(\norm{\sol}^4 \cdot d) \quad \text{samples},
\]
an exponential improvement. To check if this sample efficiency gap manifests in practice, we generate synthetic problem instances for varying
signal scales $\norm{\sol} \in \set{1, 1.5, 2, \dots, 8}$ and oversampling factors $\frac{m}{d} \in \set{2, 4, \dots, 16}$,
using Gaussian measurements and a fixed dimension $d = 128$. We solve each instance with~\cref{alg:polyaksgm} (with fixed $\eta = 1$ across
all instances) and gradient descent (with optimized step-size $\eta$, chosen to minimize the final estimation error, for each individual instance).
We declare a solve successful if an estimate $\widehat{x}$ satisfying $\norm{\widehat{x} - \sol} \leq 10^{-5}$ is found
within $10^4$ iterations and calculate the empirical success probability over $25$
independently generated problem instances for each configuration. The results of the experiment
are depicted in~\cref{fig:phase-transition}. We observe that gradient descent experiences a
sharp cut-off in recovery probability, while \cref{alg:polyaksgm} can recover
higher-energy signals with substantially fewer measurements.

\begin{figure}[h]
	\centering
	\includegraphics[width=0.7\linewidth]{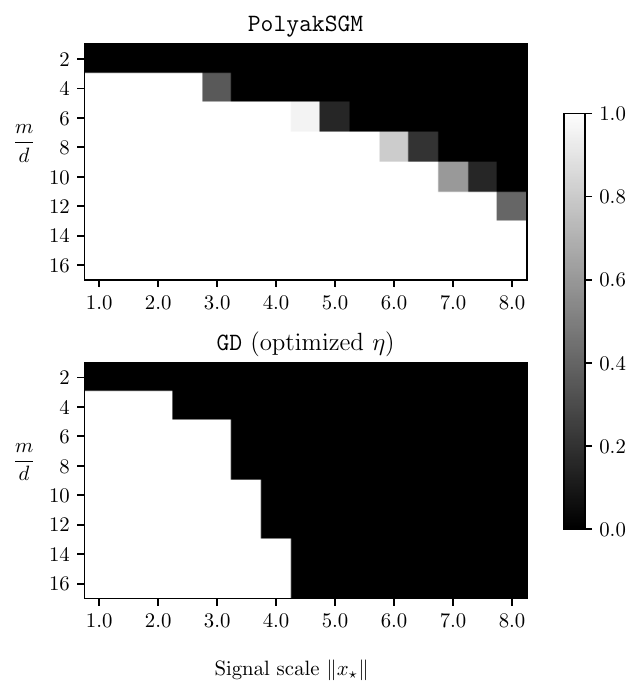}
	\caption{Empirical probability of recovering $\sol$ for various signal scales and oversampling factors
		over synthetic problems with $d = 128$ and Gaussian measurements. The empirical probability for each
		configuration is calculated over $25$ independently generated problem instances. Lighter tiles indicate higher probability
		of recovery. We compare~\cref{alg:polyaksgm} with $\eta = 1$
		against gradient descent with optimized stepsize $\eta$ using a logarithmically spaced grid.}
	\label{fig:phase-transition}
\end{figure}

\subsubsection{Optimized stepsizes}
\label{sec:polyak-vs-gd-optimized}
We now compare the convergence behavior of~\cref{alg:polyaksgm} against gradient descent~\eqref{eq:gd-baseline}
on different synthetic problems. We use~\cref{alg:polyaksgm} with pre-factor
$\eta = 1$, corresponding to the stepsize originally proposed by~\citet{Polyak69}.
To ensure a fair comparison, we experimented with different values of $c_0$
for~\eqref{eq:gd-baseline}, searching over a grid $\set{2^{j} \cdot \exp(5 \norm{\sol}) \mid -3 \leq j \leq 3}$ for each
instance. We tried different values
for the dimension $d \in \set{256, 512, 1024}$ and oversampling ratios $\frac{m}{d} \in \set{4, 8}$.
As the results in~\cref{fig:polyak-vs-gd} show, our method produces iterates
that converge to the solution $\sol$ nearly an order of magnitude faster than
the baseline method.

\begin{figure}[h]
	\centering
	\begin{subfigure}[t]{0.95\textwidth}
		\includegraphics[width=\textwidth]{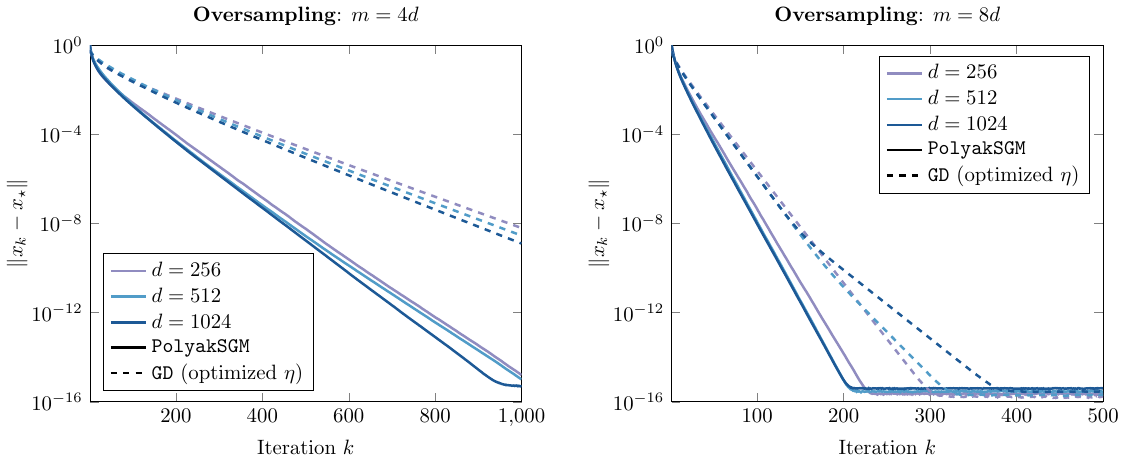}
		\caption{Performance of~\cref{alg:polyaksgm} against gradient descent with optimized stepsize $\eta$
			for measurement vectors $a_i$, $i = 1, \dots, m$ generated from a Gaussian ensemble.}
		\label{fig:polyak-vs-gd-gaussian}
	\end{subfigure} \\
	\vspace{8pt}
	\begin{subfigure}[t]{0.95\textwidth}
		\includegraphics[width=\textwidth]{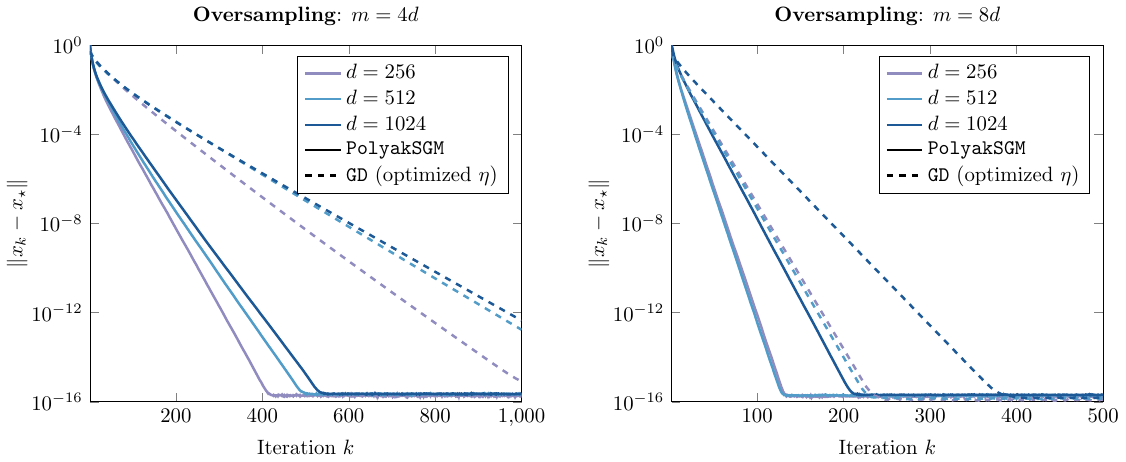}
		\caption{Performance of~\cref{alg:polyaksgm} against gradient descent with optimized stepsize $\eta$
			for measurement vectors $a_i$, $i = 1, \dots, m$ generated from a randomized Walsh-Hadamard (RWHT)
			ensemble.}
		\label{fig:polyak-vs-gd-rwfht}
	\end{subfigure}
	\vspace{4pt}
	\caption{Comparison of \texttt{PolyakSGM} (\cref{alg:polyaksgm}) using $\eta = 1$ with gradient descent~\eqref{eq:gd-baseline} using tuned stepsize $\eta$
		for synthetic problems with Gaussian and randomized Walsh-Hadamard measurements.
		The iterates produced by~\cref{alg:polyaksgm} converge to $\sol$ at moderately
		faster rates.
	}
	\label{fig:polyak-vs-gd}
\end{figure}

\subsubsection{Theoretically prescribed stepsizes}
\label{sec:polyak-vs-gd-theory}
Next, we compare~\cref{alg:polyaksgm} equipped with the stepsize prescribed by~\cref{theorem:main}:
\[
	\eta = \frac{1}{8 \sqrt{\pi} (1 + 9 \pi \norm{\sol}^2)}
\]
against gradient descent with the stepsize proposed by~\cite{FVW+23}, given in~\eqref{eq:gd-baseline}.
We posit that the primary advantage of our method stems from the milder dependence on $\norm{\sol}$
and generate instances with $\norm{\sol} \in \set{1, 2, 4}$ to
verify this hypothesis. For our experiments, we fix the number of samples to
$m = d \norm{\sol}^4$, as suggested by~\cref{theorem:main}.
The results in~\cref{fig:polyak-vs-gd-theory} confirm that the convergence rate of our
method decays more gracefully with the norm $\norm{\sol}$. At the same time, we observe
that the gradient method stagnates when $\norm{\sol} = 4$, while our method does not; this
may be due to the fact that $m = d \norm{\sol}^4$ samples are below the sample complexity
threshold from~\cite{FVW+23}. Note that the initial iterate distance in~\cref{fig:polyak-vs-gd-theory}
appears lower for the gradient method due to the use of a carefully crafted initialization~\eqref{eq:gd-baseline}.

\begin{figure}[h]
	\centering
	\includegraphics[width=\linewidth]{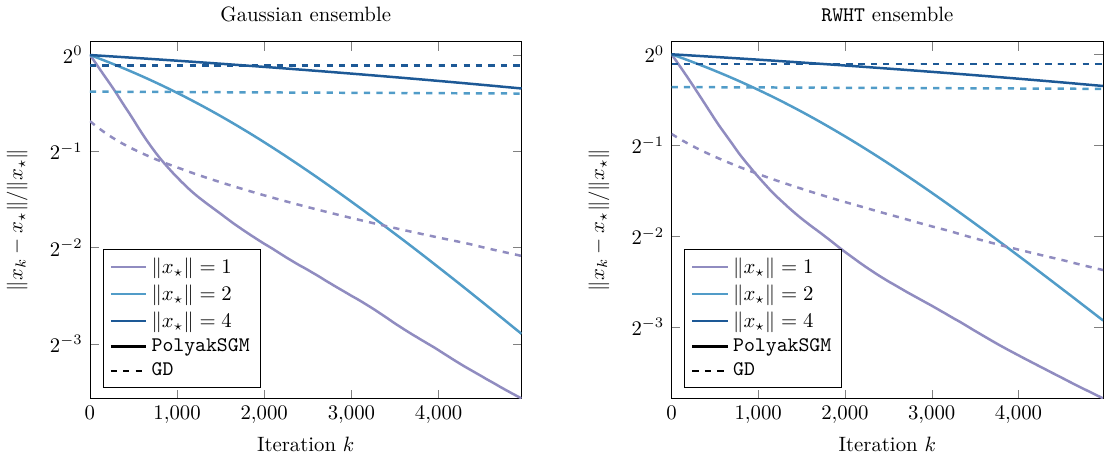}
	\caption{
		Comparison between~\cref{alg:polyaksgm} and~\eqref{eq:gd-baseline} using theoretically
		prescribed stepsizes for synthetic problem instances. In all cases,~\cref{alg:polyaksgm}
		converges faster than gradient descent.
	}
	\label{fig:polyak-vs-gd-theory}
\end{figure}

\subsubsection{Robustness to stepsize misspecification}
\label{sec:maxiter-comparison}
When deploying optimization methods, it is common to try a range of stepsizes and
use the solution achieving the lowest loss. To that end, we conduct an experiment
comparing the robustness of~\cref{alg:polyaksgm} and~\eqref{eq:gd-baseline} to different
stepsizes $\eta$. In particular, we report the number of iterations elapsed
by each method to find a solution $\widehat{x}$ with distance $\norm{\widehat{x} - \sol} \leq
	\varepsilon$, using $\varepsilon = 10^{-5}$ and a maximal budget of $10^{4}$ iterations (we terminate
any run that has not achieved the target suboptimality within the iteration limit).
We fix $d = 256$, $m = 4d$ and $\norm{\sol} = 1$ and initialize both methods at $x_{0} = \bm{0}$.
We solve 10 independently generated instances for each configuration and report the
median number of iterations $\pm$ 3 standard deviations; the results are shown
in~\cref{fig:maxiter}. While the best configuration yields only slightly faster
convergence for \texttt{PolyakSGM} compared to gradient descent, the latter method
exhibits higher variance. We also observe that the discrepancy between best-performing
and theoretically prescribed stepsizes is much starker for gradient
descent compared to \texttt{PolyakSGM}, suggesting that the convergence analysis of
gradient descent may be substantially improved.

\begin{figure}[h]
	\centering
	\begin{subfigure}[b]{0.475\linewidth}
		\includegraphics[width=\textwidth]{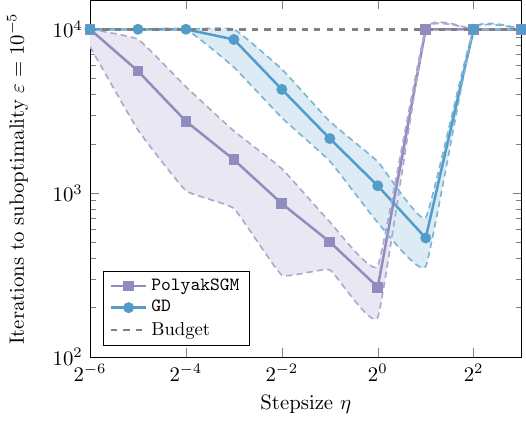}
		\caption{Gaussian measurements}
		\label{fig:maxiter:gaussian}
	\end{subfigure}%
	\hfill
	\begin{subfigure}[b]{0.475\linewidth}
		\includegraphics[width=\textwidth]{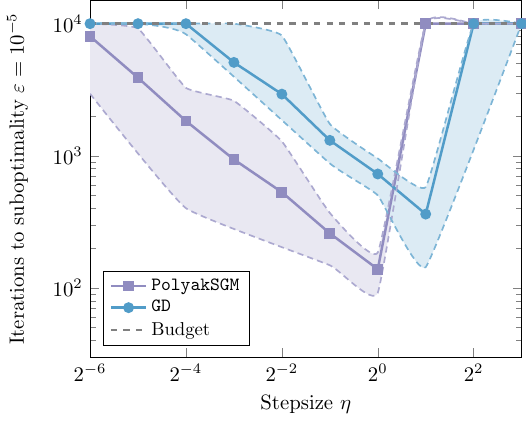}
		\caption{RWHT experiments}
		\label{fig:maxiter:rfwht}
	\end{subfigure}\\[1em]
	\caption{Number of iterations to achieve suboptimality of $\varepsilon = 10^{-5}$
		for~\cref{alg:polyaksgm} and gradient descent for different stepsizes $\eta$.
		Reporting median over 10 independent runs (solid line) $\pm$ 3 standard
		deviations (shaded area); see~\cref{sec:maxiter-comparison} for description.}
	\label{fig:maxiter}
\end{figure}

\subsection{Comparison of $\texttt{AdPolyakSGM}$ and $\texttt{PolyakSGM}$}
\label{sec:experiments:adaptive-nonadaptive}
We compare the empirical performance of \texttt{AdPolyakSGM} (\cref{alg:adpolyak}) and \texttt{PolyakSGM} (\cref{alg:polyaksgm})
on synthetic problems with measurements generated by Gaussian and RWHT ensembles. Recall that~\cref{prop:adpolyak-termination}
implies that the iteration complexity of~\cref{alg:adpolyak} is at most a
constant factor worse than that of the  ``optimal'' version. Our experiments,
shown in~\cref{fig:adaptive-vs-nonadaptive}, provide the following insights:
\begin{enumerate}
	\item The standard Polyak stepsize (corresponding to $\eta = 1$) already
	      yields good performance.
	\item The number of subgradient evaluations of~\cref{alg:adpolyak} is within a small constant
	      factor of the ``optimal'' number, as predicted by our theory.
\end{enumerate}
In particular, the first point above indicates that the standard Polyak step size
might be sufficient to guarantee convergence for the CT problem. Although beyond the scope of this
paper, a theoretical investigation of this phenomenon is an exciting direction for future work.

\begin{figure}[h]
	\centering
	\begin{subfigure}[t]{0.95\textwidth}
		\includegraphics[width=\textwidth]{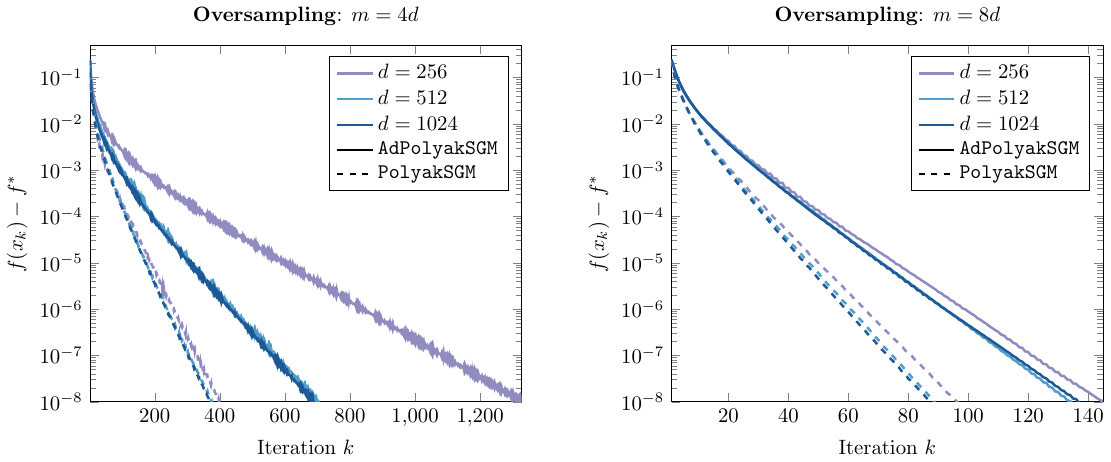}
		\caption{Performance of~\cref{alg:polyaksgm} with $\eta = 1$ against~\cref{alg:adpolyak}
			for measurement vectors $a_i$, $i = 1, \dots, m$ generated from a Gaussian ensemble.}
		\label{fig:adaptive-vs-nonadaptive-gaussian}
	\end{subfigure}\\[1em]
	\vspace{8pt}
	\begin{subfigure}[t]{0.95\textwidth}
		\includegraphics[width=\textwidth]{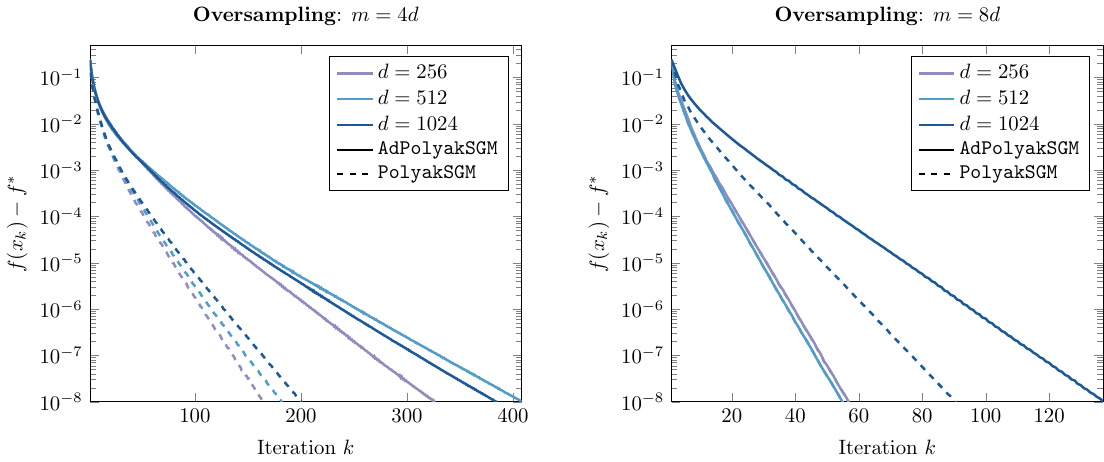}
		\caption{Performance of~\cref{alg:polyaksgm} with $\eta = 1$ against~\cref{alg:adpolyak}
			for measurement vectors $a_i$, $i = 1, \dots, m$ generated from a randomized Walsh-Hadamard (RWHT)
			ensemble.}
		\label{fig:adaptive-vs-nonadaptive-rfwht}
	\end{subfigure}
	\caption{%
		Comparison between~\cref{alg:polyaksgm} and~\cref{alg:adpolyak} for synthetic problems
		with Gaussian and randomized Walsh-Hadamard measurements.
	}
	\label{fig:adaptive-vs-nonadaptive}
\end{figure}

\subsection{Effect of measurement noise}
\label{sec:experiments-noisy}
This section examines the performance of the proposed method in the presence of
measurement noise. We start by discussing necessary algorithmic modifications:
first, note that the presence of noise implies that the optimal value of~\eqref{eq:nonsmooth-loss}
is no longer $f_{\star} = 0$.
While it is possible in theory to apply~\cref{alg:polyaksgm} to
problems with arbitrary optimal value $f_{\star}$, it is rarely the case that $f_{\star}$ is known or
easy to estimate a-priori. To remedy this,
one can replace the Polyak step with a geometrically decaying stepsize,
as suggested by~\citet{Polyak69,Goffin77,DDMP18}, or use the inner-outer loop
structure proposed by~\citet{HK19} for problems
where a lower bound $\widetilde{f} \leq f_{\star}$ is known. Since
$\widetilde{f} = 0$ is always a valid lower bound in our setting,
we opt for the latter approach; see~\cref{sec:appendix:polyaksgm-noopt}
for a detailed description of~\cref{alg:polyaksgm-noopt}, which
we use for these experiments.

Our experiments follow the Poisson noise model.
This model is relevant for \emph{photon-counting CT},
where instead of the energy of the beam incident to the detector at angle
$\theta_i$ we measure the raw number of particles reaching the detector. Herein,
measurements satisfy
\[
	y_{i} \sim \mathrm{Poisson}\left(
	S_{i} \cdot \expfun{-\ip{a_i, \sol}_+}
	\right), \quad i = 1, \dots, m,
\]
where $S_{i} > 0$ is determined by the sensitivity of the detector and
the intensity of the X-ray beam. For large $S_i$, we can use the following Gaussian
approximation:
\begin{equation}
	y_i \approx
	S_i \expfun{-\ip{a_i, \sol}_+} +
	\sqrt{S_i \expfun{-\ip{a_i, \sol}_+}} \cdot \xi_i, \;\;
	\xi_i \sim \cN(0, 1).
	\label{eq:gaussian-approx-to-poisson}
\end{equation}
Our Poisson simulations use the model~\eqref{eq:gaussian-approx-to-poisson}
and fix $S_{i} \equiv S = 10^{5}$ for all detectors.

In what follows, we run a synthetic experiment with $d = 128$ (Gaussian ensemble), $d = 128 \times 128$ (\texttt{RWHT} ensemble),
and $m = 8 \cdot d$, while varying the norm of the unknown signal $\norm{\sol} \in \set{2, 3, 4, 5}$. We then run~\cref{alg:polyaksgm-noopt}
with $T_{\mathsf{outer}} = 10$, $T_{\mathsf{inner}} = 1000$, $x_{0} = \bm{0}$
and $\eta = 1$, for a total budget of $T_{\mathsf{outer}} \times T_{\mathsf{inner}} = 10^{4}$
subgradient evaluations, and compare against gradient descent with a budget of
$10^{4}$ iterations starting from the same initial point $x_0$. For each run of
gradient descent, we optimize over a logarithmically-spaced grid $\set{2^{j} \mid j = -3, -2, \dots, 2, 3}$
to pick the step-size $\eta_k$. The results of the experiment are shown in~\cref{fig:noisy},
illustrating significant performance gains for~\cref{alg:polyaksgm-noopt} compared
to gradient descent across all configurations.

\begin{figure}[h]
	\centering
	\includegraphics[width=.95\linewidth]{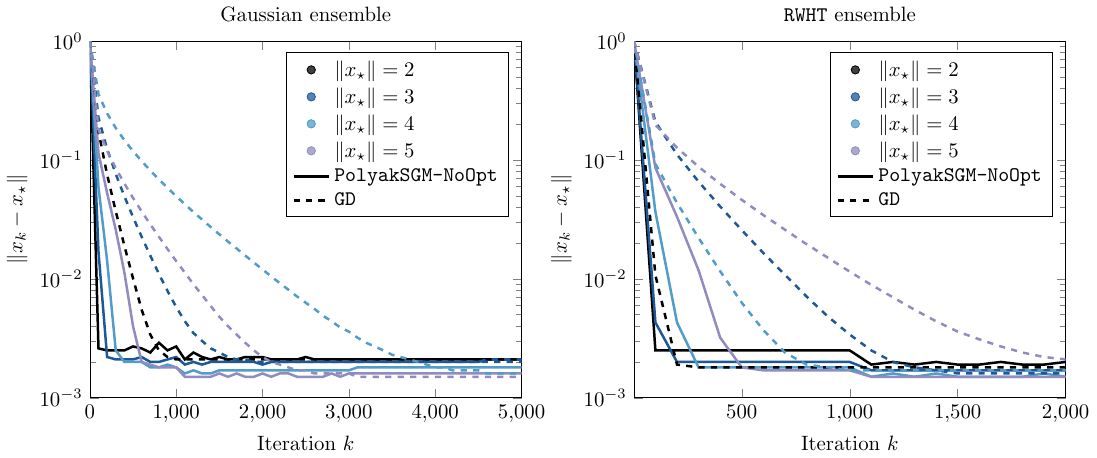}
	\caption{Convergence of Alg.~\ref{alg:polyaksgm-noopt} and
		and gradient descent (\texttt{GD}) for problems with Poisson measurement noise~\eqref{eq:gaussian-approx-to-poisson}.
		For a fixed budget of (sub)gradient evaluations, the two methods achieve similar estimation errors.
		However,~\cref{alg:polyaksgm-noopt} typically
		converges to the minimal distance an order of magnitude faster than gradient descent.}
	\label{fig:noisy}
\end{figure}

\subsection{Experiments on CT imaging}
We now turn to a more realistic model of X-ray CT imaging, wherein the
unknown signal $\sol$ is a vectorized version of an image $I_{\star} \in [0, 1]^{n \times n}$
and $a_{i}$ is the $i^{\text{th}}$ row of the discrete Radon transform applied
to $\sol$ at angle $\theta_i$.
In a typical imaging setup, the number of measurements $m \ll n^2$ and the
resulting problem is ill-posed, necessitating the use of regularization
in the form of constraints or penalties. In this section, we focus on
total-variation (TV) regularization:\footnote{
	We focus on the constrained instead of the penalty version of TV regularization,
	since the Polyak stepsize cannot be used as-is in the presence of regularization
	penalties.
}
\begin{align}
	\min_{x} & \set{\frac{1}{m} \sum_{i = 1}^m \abs{y_i - h_i(x)} \mid \tvnorm{\mat(x)} \leq \lambda},
	\label{eq:tv-regularization}
\end{align}
where $\mat(x): \Rbb^{n^2} \to \Rbb^{n \times n}$ reshapes a vector to a square
matrix of compatible shape and $\tvnorm{x}$ is the total variation norm
proposed by~\cite{ROF92}. Given $x \in \Rbb^{n^2}$ and $X \in \Rbb^{n \times n}$,
\begin{align}
	\mat(x)    & = \begin{bmatrix}
		               x_{1:n}      &
		               x_{(n+1):2n} &
		               \dots        &
		               x_{(n^2 - n + 1):n^2}
	               \end{bmatrix},
	\label{eq:mat-operator}             \\
	\tvnorm{X} & := \sum_{i, j} \sqrt{
		(X_{i + 1, j} - X_{i, j})^2 + (X_{i, j+1} - X_{i, j})^2
	}.
	\label{eq:tvnorm}
\end{align}
To solve~\eqref{eq:tv-regularization}, we use the projected subgradient method;
the steps of~\cref{alg:polyaksgm} become
\begin{equation}
	x_{k+1} := \proj_{\cX}\parens[\bigg]{x_{k} - \eta \frac{f(x_k)}{\norm{v_k}^2} v_k},
	\;\;
	\text{where}
	\;\;
	v_k \in \partial f(x_k), \;\;
	\cX := \set{x \mid \tvnorm{\mat(x)} \leq \lambda}.
	\label{eq:proj-gradient}
\end{equation}
The bottleneck in implementing~\eqref{eq:proj-gradient} is the computation of
the projection operator; we describe a simple iterative approach based on the Douglas-Rachford (DR)
splitting method for computing the projection in~\cref{appendix:sec:tv-norm-ball-projection}.
For our experiments, we use the same iterative method to endow the gradient
descent step in~\eqref{eq:gd-baseline} with a projection onto
$\cX$.

We now turn to our experiments, where we compare
the reconstruction quality achieved by~\cref{alg:polyaksgm} and gradient descent~\eqref{eq:gd-baseline}
for recovering a $128 \times 128$ Shepp-Logan phantom from CT measurements. We
follow the experimental setup from~\citet{FVW+23}; namely, we enlarge the radius
of one of the center ellipsoids of the Shepp-Logan phantom by a factor of $2$
and adjust the corresponding pixel intensities in the range $\set{0.5, 1.0, 2.0}$ to simulate the
presence of different materials, and scale down all pixel intensity values by $4$. We then run
both methods from the same initialization $x_0 = 0$ for $10000$ iterations and
visualize the reconstructions at $k = 1000$, $k = 5000$
and $k = 10000$ iterations.
We also report the peak signal-to-noise ratio (\texttt{PSNR}) achieved by each reconstruction method; given
a ground truth image $I_{\star} \in [0, 1]^{n_1\times n_2}$ and a reconstruction $\widehat{I} \in [0, 1]^{n_1 \times n_2}$,
the \texttt{PSNR} of $\widehat{I}$ is
\[
	\texttt{PSNR}(\widehat{I}; I) := -10 \log_{10}\left(
	\frac{1}{n_1 \cdot n_2}
	\left( \frac{\frobnorm{\widehat{I} - I}}{\max_{i, j} \abs{I_{ij}}} \right)^2
	\right).
\]
The reconstruction results are shown in~\Cref{fig:shepp-logan-low} for the center ellipsoid with low intensity
and~\Cref{fig:shepp-logan-low-high} for the center ellipsoid with high intensity.\footnote{
	To better illustrate the differences in reconstruction, we plot the square root of pixel intensity values.
}
Qualitatively, the image found by gradient descent is much blurrier than that of~\cref{alg:polyaksgm}
and is unable to resolve the different ellipsoids in the earlier stages of reconstruction (\cref{fig:shepp-logan-100,fig:shepp-logan-500,fig:shepp-logan-high-1000,fig:shepp-logan-high-5000}).
Additionally, the gradient descent reconstruction exhibits streak artifacts even for low pixel intensity values in the
central ellipsoid, while~\cref{alg:polyaksgm} only exhibits
limited streak artifacts in the high-intensity setup.
Quantitatively, the reconstruction \texttt{PSNR} achieved by~\cref{alg:polyaksgm} is
consistently much higher than that of gradient descent, with a $20\%$ relative improvement at the final stage.

\begin{figure}
	\begin{subfigure}[t]{\textwidth}
		\centering
		\begin{subfigure}[t]{0.3\textwidth}
			\includegraphics[width=\linewidth]{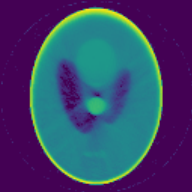}
			\caption{\texttt{GD}; $\texttt{PSNR} = 24.273$}
		\end{subfigure}~
		\begin{subfigure}[t]{0.3\textwidth}
			\includegraphics[width=\linewidth]{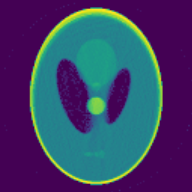}
			\caption{Alg.~\ref{alg:polyaksgm}; $\texttt{PSNR} = 27.470$}
		\end{subfigure}~
		\begin{subfigure}[t]{0.3\textwidth}
			\includegraphics[width=\linewidth]{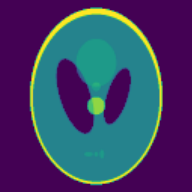}
			\caption{Ground truth}
		\end{subfigure}
		\addtocounter{subfigure}{-3}
		\renewcommand{\thesubfigure}{\roman{subfigure}}
		\caption{Reconstructions at $k = 1000$ iterations}
		\label{fig:shepp-logan-100}
	\end{subfigure}

	\vspace{12pt}
	\addtocounter{subfigure}{-1}

	\begin{subfigure}[t]{\textwidth}
		\centering
		\begin{subfigure}[t]{0.3\textwidth}
			\includegraphics[width=\linewidth]{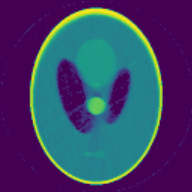}
			\caption{\texttt{GD}; $\texttt{PSNR} = 29.045$}
		\end{subfigure}~
		\begin{subfigure}[t]{0.3\textwidth}
			\includegraphics[width=\linewidth]{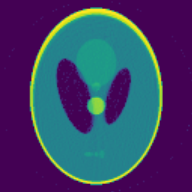}
			\caption{Alg.~\ref{alg:polyaksgm}; $\texttt{PSNR} = 33.655$}
		\end{subfigure}~
		\begin{subfigure}[t]{0.3\textwidth}
			\includegraphics[width=\linewidth]{figures/shepp-logan-img-low/phantom_orig.pdf}
			\caption{Ground truth}
		\end{subfigure}
		\addtocounter{subfigure}{-2}
		\renewcommand{\thesubfigure}{\roman{subfigure}}
		\caption{Reconstructions at $k = 5000$ iterations}
		\label{fig:shepp-logan-500}
	\end{subfigure}

	\vspace{12pt}
	\addtocounter{subfigure}{-2}

	\begin{subfigure}[t]{\textwidth}
		\centering
		\begin{subfigure}[t]{0.3\textwidth}
			\includegraphics[width=\linewidth]{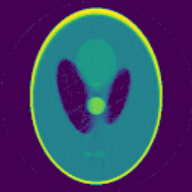}
			\caption{\texttt{GD}; $\texttt{PSNR} = 31.773$}
		\end{subfigure}~
		\begin{subfigure}[t]{0.3\textwidth}
			\includegraphics[width=\linewidth]{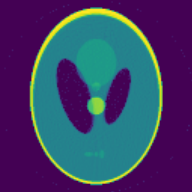}
			\caption{Alg.~\ref{alg:polyaksgm}; $\texttt{PSNR} = 38.115$}
		\end{subfigure}~
		\begin{subfigure}[t]{0.3\textwidth}
			\includegraphics[width=\linewidth]{figures/shepp-logan-img-low/phantom_orig.pdf}
			\caption{Ground truth}
		\end{subfigure}
		\addtocounter{subfigure}{-1}
		\renewcommand{\thesubfigure}{\roman{subfigure}}
		\caption{Final reconstructions ($k = 10000$ iterations)}
		\label{fig:shepp-logan-2000}
	\end{subfigure}
	\vspace{6pt}
	\caption{Comparison of reconstructions produced by gradient descent~\eqref{eq:gd-baseline}, labeled \texttt{GD}, and~\cref{alg:polyaksgm}
		for the modified Shepp-Logan phantom with low pixel intensity ($0.5)$ in the center ellipsoid.
		\Cref{alg:polyaksgm} is run with $\eta = 1$, while \texttt{GD} is
		run with $\eta_0$ as specified in~\eqref{eq:gd-baseline} and $\eta_{k} \equiv \eta_{\mathsf{best}}$ for $k \geq 1$, where $\eta_{\mathsf{best}}$
		is determined by optimizing over a logarithmically-spaced grid of values $\set{2^{j} \mid -3 \leq j  \leq 3}$. Both algorithms
		were initialized at $x_0 = 0$.
	}
	\label{fig:shepp-logan-low}
\end{figure}

\begin{figure}
	\begin{subfigure}[t]{\textwidth}
		\centering
		\begin{subfigure}[t]{0.3\textwidth}
			\includegraphics[width=\linewidth]{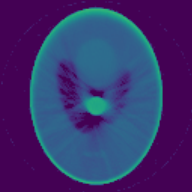}
			\caption{\texttt{GD}; $\texttt{PSNR} = 24.998$}
		\end{subfigure}~
		\begin{subfigure}[t]{0.3\textwidth}
			\includegraphics[width=\linewidth]{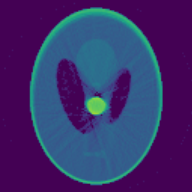}
			\caption{Alg.~\ref{alg:polyaksgm}; $\texttt{PSNR} = 29.055$}
		\end{subfigure}~
		\begin{subfigure}[t]{0.3\textwidth}
			\includegraphics[width=\linewidth]{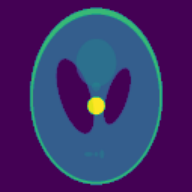}
			\caption{Ground truth}
		\end{subfigure}
		\addtocounter{subfigure}{-3}
		\renewcommand{\thesubfigure}{\roman{subfigure}}
		\caption{Reconstructions at $k = 1000$ iterations}
		\label{fig:shepp-logan-high-1000}
	\end{subfigure}

	\vspace{12pt}
	\addtocounter{subfigure}{-1}

	\begin{subfigure}[t]{\textwidth}
		\centering
		\begin{subfigure}[t]{0.3\textwidth}
			\includegraphics[width=\linewidth]{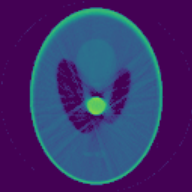}
			\caption{\texttt{GD}; $\texttt{PSNR} = 28.652$}
		\end{subfigure}~
		\begin{subfigure}[t]{0.3\textwidth}
			\includegraphics[width=\linewidth]{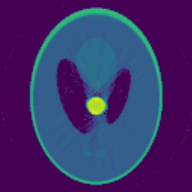}
			\caption{Alg.~\ref{alg:polyaksgm}; $\texttt{PSNR} = 34.906$}
		\end{subfigure}~
		\begin{subfigure}[t]{0.3\textwidth}
			\includegraphics[width=\linewidth]{figures/shepp-logan-img-high/phantom_orig.pdf}
			\caption{Ground truth}
		\end{subfigure}
		\addtocounter{subfigure}{-2}
		\renewcommand{\thesubfigure}{\roman{subfigure}}
		\caption{Reconstructions at $k = 5000$ iterations}
		\label{fig:shepp-logan-high-5000}
	\end{subfigure}

	\vspace{12pt}
	\addtocounter{subfigure}{-2}

	\begin{subfigure}[t]{\textwidth}
		\centering
		\begin{subfigure}[t]{0.3\textwidth}
			\includegraphics[width=\linewidth]{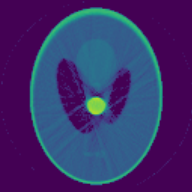}
			\caption{\texttt{GD}; $\texttt{PSNR} = 30.825$}
		\end{subfigure}~
		\begin{subfigure}[t]{0.3\textwidth}
			\includegraphics[width=\linewidth]{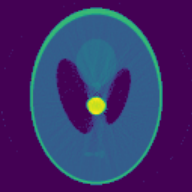}
			\caption{Alg.~\ref{alg:polyaksgm}; $\texttt{PSNR} = 37.894$}
		\end{subfigure}~
		\begin{subfigure}[t]{0.3\textwidth}
			\includegraphics[width=\linewidth]{figures/shepp-logan-img-high/phantom_orig.pdf}
			\caption{Ground truth}
		\end{subfigure}
		\addtocounter{subfigure}{-1}
		\renewcommand{\thesubfigure}{\roman{subfigure}}
		\caption{Final reconstructions ($k = 10000$ iterations)}
		\label{fig:shepp-logan-high-10000}
	\end{subfigure}
	\vspace{6pt}
	\caption{Comparison of reconstructions produced by gradient descent~\eqref{eq:gd-baseline}, labeled \texttt{GD}, and~\cref{alg:polyaksgm}
		for the modified Shepp-Logan phantom with high pixel intensity ($2.0$) in the center ellipsoid.
		\Cref{alg:polyaksgm} is run with $\eta = 1$, while \texttt{GD} is
		run with $\eta_0$ as specified in~\eqref{eq:gd-baseline} and $\eta_{k} \equiv \eta_{\mathsf{best}}$ for $k \geq 1$, where $\eta_{\mathsf{best}}$ is determined
		by optimizing over a logarithmically-spaced grid of values $\set{2^{j} \mid -3 \leq j  \leq 3}$. Both algorithms
		were initialized at $x_0 = 0$.
	}
	\label{fig:shepp-logan-low-high}
\end{figure}

\clearpage

\section{Discussion}
\label{sec:discussion}
There are several open questions and potential extensions to our work that we now discuss.

\begin{enumerate}
	\item Our analysis assumes Gaussian observations, but design vectors corresponding
	      to real CT measurements are typically sparse, nonnegative and highly structured;
	      obtaining efficiency estimates in this setting is likely challenging. Nevertheless,
	      it may be interesting to consider structured designs with limited randomness,
	      such as random illumination angles.
	\item Our theoretical results do not consider noisy measurements, such as those
	      in the experiments of~\cref{sec:experiments-noisy}.
	      The Poisson noise model, wherein the
	      observed energy satisfies
	      \[
		      y_i \sim \mathrm{Poisson}(S \exp(-\ip{a_i, \sol}_+)),
	      \]
	      is of most relevance to photon-counting CT problems.
	      We leave a theoretical investigation of the effects of different types of noise
	      to future work.
	\item Our current model does not take into account the
	      polychromatic nature of the X-ray beam, which is typically distributed across
	      a spectrum of energies~\cite{EF03,BSSP16}. This is an important modeling step, since attenuation
	      coefficients of materials used in CT imaging typically vary across the energy
	      spectrum. To address this, one could consider measurements of the form
	      \[
		      y_{i} = \sum_{j = 1}^{n_{E}} S_{j} \expfun{-\zeta_{j} \ip{a_i, \sol}_+}, \;\; i = 1, \dots, m,
	      \]
	      where $S_{j}$ is the intensity of the beam at energy level $j$, $n_{E}$ is the
	      number of distinct energy levels considered -- resulting from a discretization of the
	      energy spectrum -- and $\zeta_{j}$ models a different known attenuation coefficient per energy level.
	\item Empirically, we observe that the Polyak stepsize without any modification
	      (i.e.,~\cref{alg:polyaksgm} with $\eta = 1$)
	      performs best across all instances. A potential explanation is that the optimization
	      problem~\eqref{eq:nonsmooth-loss} possesses ``benign landscape''
	      around $x_{0} = \bm{0}$, so that a few Polyak steps guide iterates towards
	      a local basin of attraction, wherein the conditioning of the loss function
	      is much better than our estimate. While it is
	      unrealistic to expect the condition number to be independent of $\norm{\sol}$,
	      obtaining the ``optimal'' dependence is an interesting direction for future work.
	\item Finally, another potential direction for future research is expanding the
	      algorithmic toolkit applied to the problem~\eqref{eq:nonsmooth-loss}.
	      For example, it may be desirable to solve~\eqref{eq:nonsmooth-loss} with
	      with stochastic methods and/or using better local models for the objective
	      function~\cite{AD19,DDC23}.
\end{enumerate}

\subsection*{Acknowledgements}
We thank Damek Davis and Emil Sidky for helpful discussions.
VC and RW gratefully acknowledge the support of AFOSR FA9550-18-1-0166,
NSF DMS-2023109, DOE DE-SC0022232 and the Margot and Tom Pritzker Foundation.

\bibliography{references}

\clearpage

\appendix

\section{Omitted proofs}

\label{appendix:sec:missing-proofs}

\subsection{Proofs from~\cref{sec:regularity-properties}}

\subsubsection{Proof of~\cref{lemma:loss-concentration-nosimpl}}
\begin{proof}
	Note that when $x = \bm{0}$, $y_i - h_{i}(0) = 1 - \expfun{-\ip{a_i, \sol}_+}$.
	As a result,
	\begin{align}
		f(0) & = \frac{1}{m} \sum_{i = 1}^m 1 - \expfun{-\ip{a_i, \sol}_+} \notag \\
		     & = 1 - \frac{1}{m} \sum_{i = 1}^m \expfun{-\norm{\sol}
			\ip{a_i, \frac{\sol}{\norm{\sol}}}_+
		}                                                                  \notag \\
		     & \overset{\mathclap{(d)}}{=}
		1 - \frac{1}{m} \sum_{i=1}^m \expfun{-(\beta_i)_+ \norm{\sol}}, \qquad
		\beta_i \sim \cN(0, 1).
		\label{eq:f0-sum-distribution}
	\end{align}
	We now argue the sum in~\eqref{eq:f0-sum-distribution}  concentrates. To that end, we first calculate its expectation:
	\begin{align*}
		\mathbb{E}[f(0)] & = 1
		- \mathbb{E}[e^{-\beta_+ \norm{\sol}}] \\
		                 & =
		1 - \frac{1}{2} \left(
		1 + \expfun{\frac{\norm{\sol}^2}{2}} \erfc\left(\frac{\norm{\sol}}{\sqrt{2}}\right)
		\right)                                \\
		                 & =
		\frac{1}{2} \left(1 - \expfun{\frac{\norm{\sol}^2}{2}} \erfc\left(\frac{\norm{\sol}}{\sqrt{2}}\right) \right).
	\end{align*}
	To show the sum concentrates, we use the Gaussian Lipschitz inequality. We have
	\begin{align*}
		 & \frac{1}{m} \sum_{i = 1}^m (1 - \exp(-\ip{a_i, \sol}_+)) -
		\frac{1}{m} \sum_{i = 1}^m (1 - \exp(-\ip{\widetilde{a}_i, \sol}_+)) \\
		 & =
		\frac{1}{m} \sum_{i = 1}^m
		\exp(-\ip{\widetilde{a}_i, \sol}_+) - \exp(-\ip{a_i, \sol}_+)        \\
		 & \leq
		\frac{1}{m} \sum_{i = 1}^m \abs{\ip{a_i - \widetilde{a}_i, \sol}}    \\
		 & \leq
		\frac{\norm{\sol}}{\sqrt{m}} \frobnorm{A - \widetilde{A}},
		\qquad
		\text{where} \quad
		A = \begin{bmatrix}
			    a_1 & \dots & a_{m}
		    \end{bmatrix}^{\T}, \;
		\widetilde{A} := \begin{bmatrix}
			                 \widetilde{a}_1 & \dots & \widetilde{a}_m
		                 \end{bmatrix}^{\T},
	\end{align*}
	using the fact that the function $x \mapsto \exp(-x_+)$ is $1$-Lipschitz.
	We deduce that $f(0)$ is Lipschitz with modulus $\frac{\norm{\sol}}{\sqrt{m}}$
	with respect to the random vectors $\set{a_i}_{i=1,\dots,m}$. Invoking the Gaussian Lipschitz
	concentration inequality~\cite[Theorem 5.2.2]{Vershynin18} yields
	\begin{equation}
		\prob{\abs{f(0) - \expec{f(0)}} \geq t} \leq
		2 \expfun{-\frac{mt^2}{2\norm{\sol}^2}}
		\label{eq:f0-gaussian-lipschitz}
	\end{equation}
	Substituting $t = \norm{\sol} \sqrt{\frac{d}{m}}$ into~\eqref{eq:f0-gaussian-lipschitz}
	finishes the proof.
\end{proof}

\subsubsection{Proof of~\cref{corollary:loss-concentration-simpl}}
\begin{proof}
	From~\cref{lemma:erfcx-increasing}, we know that $\exp(\frac{u^2}{2}) \erfc(\frac{u}{\sqrt{2}})$
	is monotone decreasing on $u \in (0, \infty)$. As a result, we can lower bound the expectation by
	\begin{align*}
		\expec{f(0)} & =
		\frac{1}{2} \left(1 - \expfun{\frac{\norm{\sol}^2}{2}} \erfc\left(\frac{\norm{\sol}}{\sqrt{2}}\right)\right) \\
		             & \geq
		\frac{1}{2} \left(1 - \sqrt{\mathrm{e}} \cdot \erfc\left(\frac{1}{ \sqrt{2} }\right)\right)                  \\
		             & \geq
		\frac{1}{2} (1 - 0.53)                                                                                       \\
		             & = 0.235,
	\end{align*}
	after numerically evaluating $\sqrt{\mathrm{e}} \cdot \erfc(1 / \sqrt{2}) \leq 0.53$.
	The rest follows from~\cref{lemma:loss-concentration-nosimpl}.
\end{proof}

\subsubsection{Proof of~\cref{lemma:subgradient-correlation-at-0}}
\begin{proof}
	We first argue that $v_0$ is indeed a subgradient at $0$. To that end, note that
	\begin{align*}
		\partial f(x) & = \frac{1}{m} \sum_{i=1}^m \sign(y_i - h_i(x)) \cdot (-\partial h_{i}(x))                              \\
		              & = \frac{1}{m} \sum_{i = 1}^m \sign(y_i - h_i(x)) (-e^{-\ip{a_i, x}_+}) a_i \1\set{\ip{a_i, x} \geq 0}.
	\end{align*}
	Note $y_i - h_i(0) = 1 - \expfun{-\ip{a_i, \sol}_+} \geq 0$. Using
	$\sign(0) = 0$ and $\1\set{x \geq 0} = 1$ for $x \geq 0$, we have
	the following expression for $v_0$:
	\begin{align*}
		v_0 = \frac{1}{m} \sum_{i = 1}^m -a_i \1\set{\ip{a_i, \sol} > 0}
		\Rightarrow -\ip{v_0, \sol} & =
		\frac{1}{m} \sum_{i = 1}^m \ip{a_i, \sol}_+
		\overset{\mathclap{(d)}}{=}
		\frac{\norm{\sol}}{m} \sum_{i = 1}^m (\beta_i)_+,
	\end{align*}
	where $\beta_i \iid \cN(0, 1)$. This sum has expected value
	\begin{equation}
		\expec{-\ip{v_0, \sol}} =
		\norm{\sol} \expec[X \sim \cN(0, 1)]{X_+} =
		\norm{\sol} \sqrt{\frac{1}{2 \pi}}.
		\label{eq:v0-sol-expec}
	\end{equation}
	Additionally, it is $(\norm{\sol} / \sqrt{m})$ Lipschitz as a function of $A = (a_1, \dots, a_m)$, since
	\begin{align*}
		\abs{\frac{1}{m} \sum_{i = 1}^m \ip{a_i, \sol}_+ - \ip{\widetilde{a}_i, \sol}_+}
		 & \leq \frac{1}{m} \sum_{i = 1}^m \abs{\ip{a_i - \widetilde{a}_i, \sol}} \\
		 & \leq
		\frac{\norm{\sol}}{m} \sum_{i=1}^m \norm{a_i - \widetilde{a}_i}           \\
		 & \leq
		\frac{\norm{\sol}}{\sqrt{m}} \frobnorm{A - \widetilde{A}},
	\end{align*}
	where the first inequality follows from the fact that $x \mapsto [x]_+$ is $1$-Lipschitz,
	the second inequality follows from Cauchy-Schwarz and the last inequality follows
	from norm equivalence. Applying the Gaussian concentration inequality~\cite[Theorem 5.2.2]{Vershynin18},
	we obtain
	\begin{equation*}
		\prob{\abs{-\ip{v_0, \sol} - \norm{\sol} \frac{1}{\sqrt{2\pi}}} \geq t}
		\leq 2\expfun{-\frac{mt^2}{2 \norm{\sol}^2}}
	\end{equation*}
	Setting $t := \norm{\sol} \sqrt{\frac{2d}{m}}$ yields~\eqref{eq:v0-correlation}.
	For~\eqref{eq:v0-x-norm}, we write
	\begin{align*}
		\norm{v_0} & = \sup_{u \in \Sbb^{d-1}}
		\frac{1}{m} \sum_{i = 1}^m \ip{a_i, u} \1\set{\ip{a_i, \sol} \geq 0} \\
		           & \leq
		\sup_{u \in \Sbb^{d-1}}
		\frac{1}{m} \sum_{i = 1}^m \abs{\ip{a_i, u}}                         \\
		           & \leq
		\frac{1}{\sqrt{m}} \opnorm{A},
	\end{align*}
	which is bounded from above by $1 + 2 \sqrt{\frac{d}{m}}$ with probability
	at least $1 - \exp(-d)$ by~\cite[Corollary 7.3.3]{Vershynin18}. This
	completes the proof of~\eqref{eq:v0-x-norm}. Finally, for~\eqref{eq:initial-distance}, we write
	\begin{align*}
		\norm{x_1 - \sol}^2 & =
		\norm{\sol}^2 + \frac{\eta^2 f^2(0)}{\norm{v_0}^2}
		- 2 \eta \frac{f(0)}{\norm{v_0}^2} \ip{v_0, 0 - \sol}                 \\
		                    & =
		\norm{\sol}^2 - \frac{\eta f(0)}{\norm{v_0}^2} \left(
		-2 \ip{v_0, \sol} - \eta f(0)
		\right)                                                               \\
		                    & \leq
		\norm{\sol}^2 - \frac{\eta f(0)}{\norm{v_0}^2} \left(
		\frac{2}{\sqrt{\pi}} \norm{\sol} - \eta
		\right)                                                               \\
		                    & \leq
		\norm{\sol}^2 - \frac{\eta f(0)}{\norm{v_0}^2 \sqrt{\pi}} \norm{\sol} \\
		                    & \leq
		\norm{\sol}^2 \left(
		1 - \frac{\eta}{10 \sqrt{\pi} \norm{\sol}}
		\right),
	\end{align*}
	where the first inequality follows from~\eqref{eq:v0-correlation} and $f(0) \leq 1$,
	the second inequality follows from the assumptions $\eta \leq \frac{1}{2}$ and
	$\norm{\sol} \geq 1$, and the last inequality follows from~\cref{corollary:loss-concentration-simpl}
	and~\eqref{eq:v0-x-norm}.
	This completes the proof of~\eqref{eq:initial-distance}.
\end{proof}

\subsubsection{Proof of~\cref{prop:lipschitz}}
\begin{proof}
	Recall that the absolute value function and $x \mapsto \exp(-x_+)$ are $1$-Lipschitz. It follows that,
	for any pair $x, \bar{x} \in \Rbb^d$, we have
	\begin{align*}
		f(x) - f(\bar{x}) & =
		\frac{1}{m} \sum_{i = 1}^m \abs{y_i - h_i(x)} - \abs{y_i - h_i(\bar{x})}                \\
		                  & \leq
		\frac{1}{m} \sum_{i = 1}^m \abs{h_i(x) - h_i(\bar{x})}                                  \\
		                  & =
		\frac{1}{m} \sum_{i = 1}^m \abs{e^{-\ip{a_i, x}_+} - e^{-\ip{a_i, \bar{x}}_+}}          \\
		                  & \leq
		\frac{1}{m} \sum_{i = 1}^m \abs{\ip{a_i, x - \bar{x}}}                                  \\
		                  & \leq
		\norm{x - \bar{x}} \cdot \sup_{u \in \Sbb^{d-1}} \frac{1}{m} \sum_{i = 1}^m \norm{Au}_1 \\
		                  & \leq
		\norm{x - \bar{x}} \cdot \sup_{u \in \mathbb{S}^{d-1}} \frac{1}{\sqrt{m}} \norm{Au}     \\
		                  & =
		\norm{x - \bar{x}} \cdot \frac{1}{\sqrt{m}} \opnorm{A}.
	\end{align*}
	A completely symmetric argument yields $f(\bar{x}) - f(x) \leq \norm{x - \bar{x}} \cdot \frac{\opnorm{A}}{\sqrt{m}}$.
	From~\cite[Corollary 7.3.3]{Vershynin18}, it follows that $\opnorm{A} \leq \sqrt{m} + 2 \sqrt{d}$ with
	probability at least $1 - \expfun{-d}$. Since the latter event does not depend on the
	choice of $x, \bar{x}$, we conclude that
	\begin{align*}
		\abs{f(x) - f(\bar{x})} \leq \norm{x - \bar{x}} \cdot \left(1 + 2 \sqrt{\frac{d}{m}}\right),
		\quad \text{for all $x, \bar{x} \in \mathbb{R}^d$},
	\end{align*}
	with probability at least $1 - \expfun{-d}$.
\end{proof}

\subsubsection{Proof of~\cref{lemma:sharpness-from-aiming}}
\begin{proof}
	The main stepping stone to~\eqref{eq:sharpness-from-aiming} is the
	following ``decrease principle'' (see~\cite[Theorem 3.2.8]{CLSW08} for a version
	using the so-called \emph{proximal} subdifferential):
	\begin{claim}[Decrease principle]
		\label{claim:decrease-principle}
		Let $f$ be locally Lipschitz and fix $\rho, \mu > 0$. Suppose that
		\begin{equation}
			z \in \cB(x; \rho), \;\; \zeta \in \partial f(z) \implies \norm{\zeta} \geq \mu,
			\label{eq:decrease-principle-condition}
		\end{equation}
		Then the following inequality holds:
		\begin{equation}
			\inf_{z \in \cB(x; \rho)} f(z) \leq f(x) - \rho \mu.
			\label{eq:decrease-principle}
		\end{equation}
	\end{claim}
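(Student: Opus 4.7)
The plan is to proceed by contradiction using Ekeland's variational principle. Assume toward contradiction that $\inf_{z \in \cB(x;\rho)} f(z) > f(x) - \rho\mu$, and set $\epsilon := f(x) - \inf_{z \in \cB(x;\rho)} f$, so that $\epsilon < \rho\mu$; this quantity is well-defined and finite since $f$ is locally Lipschitz and hence bounded below on the closed ball. The strict inequality $\epsilon/\rho < \mu$ lets us pick a scale $\lambda$ with $\epsilon/\lambda < \mu$ and $\lambda < \rho$ simultaneously; any $\lambda \in (\epsilon/\mu,\, \rho)$ works.

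Ekeland's variational principle, applied to $f$ on the complete metric space $\cB(x;\rho)$ with $x$ as an $\epsilon$-minimizer and scale $\lambda$, then produces a point $\bar z \in \cB(x;\rho)$ satisfying $f(\bar z) \leq f(x)$, $\|\bar z - x\| \leq \lambda$, and the global bound $f(w) + (\epsilon/\lambda)\|w - \bar z\| \geq f(\bar z)$ for every $w \in \cB(x;\rho)$. The choice $\lambda < \rho$ forces $\bar z$ strictly into the interior of the ball, so $\bar z$ is an \emph{unconstrained} local minimizer of the perturbed objective $w \mapsto f(w) + (\epsilon/\lambda)\|w - \bar z\|$ in a neighborhood of itself.

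Fermat's rule applied to this perturbed objective at $\bar z$, together with the Clarke sum rule (valid here because $w \mapsto (\epsilon/\lambda)\|w - \bar z\|$ is convex and hence Clarke-regular, while both summands are locally Lipschitz), yields $0 \in \partial f(\bar z) + (\epsilon/\lambda)\,\cB(0;1)$, where we used that the Clarke subdifferential of $\|\cdot\|$ at the origin is the closed unit ball. Consequently, there exists $\zeta \in \partial f(\bar z)$ with $\|\zeta\| \leq \epsilon/\lambda < \mu$, which directly contradicts~\eqref{eq:decrease-principle-condition} applied at the interior point $\bar z$.

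The main obstacle, and the reason a direct argument via approximate minimizers on the ball does not close, is that such a minimizer may lie on the boundary, where first-order conditions involve normal-cone multipliers that need not be small. Ekeland's principle is the precise tool that converts an arbitrary $\epsilon$-minimizer into a genuine unconstrained minimizer of a gently perturbed problem; carefully balancing the perturbation strength $\epsilon/\lambda$ against the subgradient lower bound $\mu$ and the radius $\rho$ is the technical crux.
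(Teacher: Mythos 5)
Your proof is correct, but it follows a genuinely different route from the paper's. The paper does not argue from scratch: it points to the decrease principle for the proximal subdifferential in Clarke--Ledyaev--Stern--Wolenski (Theorem 3.2.8) and observes that the same proof goes through for the Clarke subdifferential once the proximal mean value inequality is replaced by the Clarke--Ledyaev multidirectional mean value inequality (Theorem 4.1 of CL94). You instead give a self-contained argument: Ekeland's variational principle on the closed ball with parameters $\epsilon < \rho\mu$ and $\lambda \in (\epsilon/\mu,\rho)$, forcing the Ekeland point $\bar z$ into the interior, then Fermat's rule plus the Clarke sum rule (the inclusion $\partial(f+g) \subseteq \partial f + \partial g$ holds for any locally Lipschitz summands, so the regularity remark is not even needed) to produce $\zeta \in \partial f(\bar z)$ with $\norm{\zeta} \leq \epsilon/\lambda < \mu$, contradicting the hypothesis. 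This buys elementarity and transparency --- no multidirectional mean value machinery, only Ekeland and basic Clarke calculus --- at the cost of being longer on the page than the paper's citation, which inherits the statement verbatim from the reference. Two trivial points you should still patch: (i) your $\epsilon := f(x) - \inf_{\cB(x;\rho)} f$ may equal zero, in which case either note that $x$ is then an interior minimizer so $0 \in \partial f(x)$, an immediate contradiction, or run the same argument with any $\epsilon' \in (0, \rho\mu)$; (ii) if $\cB(x;\rho)$ is read as the open ball (the paper's notation is ambiguous), it is not complete, so apply your argument on closed balls of radius $\rho - \delta$ and let $\delta \downarrow 0$ in the conclusion. Neither affects the substance of the argument.
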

	Before proving~\cref{claim:decrease-principle}, we show how
	it implies the conclusion in~\eqref{eq:sharpness-from-aiming}. Indeed,
	suppose the conclusion were false; then, for some $x \in \cB(\sol; \norm{\sol})$, there is
	$\rho > 0$ such that
	\[
		\min\left(\norm{x - \sol}, \norm{x}\right) > \rho > \frac{f(x) - f_{\star}}{\mu}.
	\]
	From~\cref{lemma:minimum-irrelevant} and $\norm{x} \leq 2 \norm{\sol}$ it follows that
	\begin{equation}
		\min\left(\norm{x - \sol}, \norm{x}, \dist(x, \cB^c(0; 3 \norm{\sol})\right) = \min(\norm{x - \sol}, \norm{x}) > \rho > \frac{f(x) - f_{\star}}{\mu}.
		\label{eq:solvability-contradiction}
	\end{equation}
	At the same time, since $\min(\cdot)$ is associative, we have that
	\[
		\min(\norm{x}, \dist(x, \cB^c(0; 3 \norm{\sol}))) = \dist\left(x, \left(\cB(0; 3 \norm{\sol}) \setD \set{0}\right)^c\right).
	\]
	Consequently, it follows that
	\[
		\cB(x; \rho) \subset \cB(0; 3 \norm{\sol}) \setD \set{0}, \;\;
		\norm{x - \sol} > \rho.
	\]
	The second conclusion in the display above implies that $x \neq \sol$. Therefore,
	\[
		x \in \cB(0; 3 \norm{\sol}) \setD \set{0, \sol}
		\overset{\eqref{eq:aiming-abstract}}{\implies}
		\min_{v \in \partial f(x)} \norm{v} \geq \mu.
	\]
	As a result, invoking~\cref{claim:decrease-principle}, we obtain
	\begin{equation}
		0 \leq \inf_{\bar{x} \in \cB(x; \rho)} f(\bar{x}) - f_{\star} \leq (f(x) - f_{\star}) - \rho \mu
		\overset{\eqref{eq:solvability-contradiction}}{<} 0,
	\end{equation}
	which is a contradiction with our assumption that~\eqref{eq:sharpness-from-aiming} fails;
	therefore, \eqref{eq:sharpness-from-aiming} must hold.

	\begin{proof}[Proof of~\cref{claim:decrease-principle}]
		It remains to prove~\cref{claim:decrease-principle}. Since
		this is essentially the same as~\cite[Theorem 3.2.8]{CLSW08}, with the proximal subdifferential
		replaced by the Clarke subdifferential, it suffices to repeat its proof with a single
		modification: instead of applying the version of the mean-value inequality from~\cite[Theorem 3.2.6]{CLSW08},
		we invoke~\cite[Theorem 4.1]{CL94}, which is valid for the Clarke subdifferential.
	\end{proof}
	This completes the proof of the Lemma.
\end{proof}

\subsubsection{Proof of~\cref{lemma:aiming-inner-product}}
\begin{proof}
	Recall that $\sign(x) = 1$ for $x > 0$, $-1$ for $x < 0$, and $\sign(0) = [-1, 1]$.
	In turn,
	\begin{align}
		 & \sign(y_i - h_i(x)) \notag                            \\
		 & =
		\sign(e^{-\ip{a_i, x}_+} - e^{-\ip{a_i, \sol}_+}) \notag \\
		 & =
		\sign(e^{-\ip{a_i, x}_+} - e^{-\ip{a_i, \sol}_+}) \cdot
		\left(
		\1\set{\ip{a_i, x - \sol} \ge 0} +
		\1\set{\ip{a_i, x - \sol} < 0}
		\right)
		\label{eq:sign-indicator-decomp}
	\end{align}
	Note that it suffices to consider $\1\set{\ip{a_i, x - \sol} > 0}$ in the first term of~\eqref{eq:sign-indicator-decomp}, since
	\begin{align*}
		\frac{1}{m} \sum_{i = 1}^m \sign(y_i - h_i(x))e^{-\ip{a_i, x}} \ip{a_i, x - \sol} \1\set{\ip{a_i, x} \geq 0} \1\set{\ip{a_i, x - \sol} = 0} = 0.
	\end{align*}
	We now proceed on a case-by-case basis.

	\paragraph{Case 1: $\ip{a_i, x - \sol} < 0$.}
	Since all nonzero terms have $\ip{a_i, x} \geq 0$, this means
	\[
		0 \leq \ip{a_i, x} < \ip{a_i, \sol} \implies
		\ip{a_i, \sol}_+ > \ip{a_i, x}_+ \implies
		\sign(e^{-\ip{a_i, x}_+} - e^{-\ip{a_i, \sol}_+}) = 1,
	\]
	via monotonicity of the exponential. This yields the partial sum
	\begin{align}
		 & \frac{1}{m} \sum_{i = 1}^m \sign(y_i - h_i(x))(-e^{-\ip{a_i, x}}) \ip{a_i, x - \sol} \1\set{\ip{a_i, x} \geq 0} \1\set{\ip{a_i, x - \sol} < 0} \notag \\
		 & =
		\frac{1}{m} \sum_{i = 1}^m e^{-\ip{a_i, x}} \ip{a_i, \sol - x} \1\set{\ip{a_i, x} \geq 0} \1\set{\ip{a_i, \sol - x} > 0} \notag                          \\
		 & = \frac{1}{m} \sum_{i = 1}^m e^{-\ip{a_i, x}} \ip{a_i, \sol - x}_+ \1\set{\ip{a_i, x} \geq 0}.
		\label{eq:partial-sum-I}
	\end{align}

	\paragraph{Case 2(i): $\ip{a_i, x - \sol} > 0$ and $\ip{a_i, x} > 0$.}
	Note that we have the following possibilities:
	\begin{itemize}
		\item If $\ip{a_i, \sol} < 0$, then $\ip{a_i, \sol}_+ = 0$. Therefore, $\ip{a_i, x} > 0$ clearly implies
		      \[
			      \ip{a_i, x}_+ > \ip{a_i, \sol}_+, \quad \text{ and thus } \quad
			      \sign(e^{-\ip{a_i, x}_+} - e^{-\ip{a_i, \sol}_+}) = -1.
		      \]
		\item If $\ip{a_i, \sol} \geq 0$, then $\ip{a_i, \sol}_+ = \ip{a_i, \sol}$; therefore,
		      \[
			      \ip{a_i, x}_+ = \ip{a_i, x} > \ip{a_i, \sol} = \ip{a_i, \sol}_+
			      \implies \sign(e^{-\ip{a_i, x}_+} - e^{-\ip{a_i, \sol}_+}) = -1.
		      \]
	\end{itemize}
	In either instance, we obtain the partial sum
	\begin{align}
		 & \frac{1}{m} \sum_{i = 1}^m \sign(y_i - h_i(x))(-e^{-\ip{a_i, x}}) \ip{a_i, x - \sol} \1\set{\ip{a_i, x} > 0} \1\set{\ip{a_i, x - \sol} > 0} \notag \\
		 & =
		\frac{1}{m} \sum_{i = 1}^m e^{-\ip{a_i, x}} \ip{a_i, x - \sol} \1\set{\ip{a_i, x} > 0} \1\set{\ip{a_i, x - \sol} > 0} \notag                          \\
		 & = \frac{1}{m} \sum_{i = 1}^m e^{-\ip{a_i, x}} \ip{a_i, x - \sol}_+ \1\set{\ip{a_i, x} > 0}.
		\label{eq:partial-sum-II-i}
	\end{align}

	\paragraph{Case 2(ii): $\ip{a_i, x - \sol} > 0$ and $\ip{a_i, x} = 0$.}
	In this case, we have
	\[
		0 = \ip{a_i, x} > \ip{a_i, \sol} \implies
		\ip{a_i, x}_+ = \ip{a_i, \sol}_+ = 0.
	\]
	As a result, $\sign(y_i - h_i(x))$ can be any value between $[-1, 1]$. We lower bound
	\begin{align}
		 & \frac{1}{m} \sum_{i = 1}^m \sign(y_i - h_i(x))(-e^{-\ip{a_i, x}}) \ip{a_i, x - \sol} \1\set{\ip{a_i, x} = 0} \1\set{\ip{a_i, x - \sol} > 0} \notag \\
		 & =
		\frac{1}{m} \sum_{i = 1}^m \sign(0) \ip{a_i, \sol} \1\set{\ip{a_i, x} = 0} \1\set{\ip{a_i, x - \sol} > 0} \notag                                      \\
		 & =
		\frac{1}{m} \sum_{i = 1}^m \sign(0) \cdot \ip{a_i, \sol}_{-} \1\set{\ip{a_i, x} = 0},
		\label{eq:partial-sum-II-ii}
	\end{align}
	using $\ip{a_i, \sol} \1\set{\ip{a_i, \sol} < 0} = \ip{a_i, \sol}_{-}$ in~\eqref{eq:partial-sum-II-ii}.

	\vspace{11pt}

	\noindent These are all the possible cases to consider. Combining~\cref{eq:partial-sum-I,eq:partial-sum-II-i,eq:partial-sum-II-ii} yields
	\begin{align*}
		\ip{\partial f(x), x - \sol} & = \frac{1}{m} \sum_{i = 1}^m \sign(y_i - h_i(x)) \cdot (-e^{-\ip{a_i, x}}) \cdot \ip{a_i, x - \sol} \1\set{\ip{a_i, x} \geq 0}                                              \\
		                             & = \begin{aligned}[t]
			                                  & \frac{1}{m} \sum_{i = 1}^m e^{-\ip{a_i, x}} \ip{a_i, x - \sol} \1\set{\ip{a_i, x} > 0} \1\set{\ip{a_i, x - \sol} > 0}      \\
			                                  & - \frac{1}{m} \sum_{i = 1}^m e^{-\ip{a_i, x}} \ip{a_i, x - \sol} \1\set{\ip{a_i, x} \geq 0} \1\set{\ip{a_i, x - \sol} < 0} \\
			                                  & + \frac{1}{m} \sum_{i=1}^m \sign(0) \ip{a_i, \sol}_{-} \1\set{\ip{a_i, x} = 0}
		                                 \end{aligned}                \\
		                             & = \begin{aligned}[t]
			                                  & \frac{1}{m} \sum_{i = 1}^m e^{-\ip{a_i, x}} \ip{a_i, x - \sol} \1\set{\ip{a_i, x} > 0} \1\set{\ip{a_i, x - \sol} \geq 0}   \\
			                                  & + \frac{1}{m} \sum_{i = 1}^m e^{-\ip{a_i, x}} \ip{a_i, \sol - x} \1\set{\ip{a_i, x} \geq 0} \1\set{\ip{a_i, \sol - x} > 0} \\
			                                  & + \frac{1}{m} \sum_{i=1}^m \sign(0) \ip{a_i, \sol}_{-} \1\set{\ip{a_i, x} = 0}
		                                 \end{aligned}                \\
		                             & = \begin{aligned}[t]
			                                  & \frac{1}{m} \sum_{i = 1}^m e^{-\ip{a_i, x}} \ip{a_i, x - \sol}_+ \1\set{\ip{a_i, x} > 0}      \\
			                                  & + \frac{1}{m} \sum_{i = 1}^m e^{-\ip{a_i, x}} \ip{a_i, \sol - x}_+ \1\set{\ip{a_i, x} \geq 0} \\
			                                  & + \frac{1}{m} \sum_{i=1}^m \sign(0) \ip{a_i, \sol}_{-} \1\set{\ip{a_i, x} = 0}
		                                 \end{aligned}                                                  \\
		                             & = \begin{aligned}[t]
			                                  & \frac{1}{m} \sum_{i = 1}^m e^{-\ip{a_i, x}} \1\set{\ip{a_i, x} > 0} \left(\ip{a_i, x - \sol}_+ + \ip{a_i, \sol - x}_+ \right) \\
			                                  & + \frac{1}{m} \sum_{i = 1}^m \1\set{\ip{a_i, x} = 0} \left(\ip{a_i, \sol}_+ + \sign(0) \ip{a_i, \sol}_{-}\right)
		                                 \end{aligned} \\
		                             & =
		\begin{aligned}[t]
			 & \frac{1}{m} \sum_{i = 1}^m e^{-\ip{a_i, x}} \1\set{\ip{a_i, x} > 0} \cdot \abs{\ip{a_i, x - \sol}}   \\
			 & + \frac{1}{m} \sum_{i=1}^m \1\set{\ip{a_i, x} = 0} (\ip{a_i, \sol}_+ + \sign(0) \ip{a_i, \sol}_{-}),
		\end{aligned}
	\end{align*}
	where the last equality follows from the identity $[x]_+ + [-x]_+ = \abs{x}$.
\end{proof}

\subsubsection{Proof of~\cref{lemma:aiming-lb}}
\begin{proof}
	We first argue that we can effectively ignore the second term in the expression furnished by~\cref{lemma:aiming-inner-product},
	since it corresponds to a zero-measure event (for any $x \neq 0$). To show this formally,
	we first note that since $\sign(0) = [-1, 1]$ and $\1\set{\ip{a_i, x} = 0} = [0, 1]$ the contribution of the second term is always at least
	\begin{align*}
		 & \frac{1}{m} \sum_{i = 1}^m \1\set{\ip{a_i, x} = 0} \left(\ip{a_i, \sol}_+ + \sign(0) \ip{a_i, \sol}_{-}\right) \geq \frac{1}{m} \sum_{i = 1}^m \strictind\set{\ip{a_i, x} = 0} \ip{a_i, \sol}_{-},
	\end{align*}
	where $\strictind\set{\cE} = 1$ when $\cE$ happens and $0$ otherwise. We now write
	\begin{align*}
		\strictind\set{\ip{a_i, x} = 0} \ip{a_i, \sol}_{-} & = \strictind\set{\ip{a_i, u} = 0}\left(\cancelto{0}{\ip{a_i, u}} \ip{u, \sol} +
		\ip{P_{u^{\perp}} a_i, P_{u^{\perp}} \sol}\right)_{-}                                                                                \\
		                                                   & \overset{(d)}{=}
		\strictind\set{\ip{a_i, u} = 0} \cdot \ip{\widetilde{a}_i, P_{u^{\perp}} \sol}_{-}, \qquad
		\widetilde{a}_i \sim \cN(0, I_d) \indep a_i,
	\end{align*}
	where we write $u = \sfrac{x}{\norm{x}}$, $P_{u^{\perp}} = (I - uu^{\T})$, and use
	the fact that the variables $\ip{a_i, u}$ and $P_{u^{\perp}} a_i$ are uncorrelated (thus independent) Gaussians
	to replace $P_{u^{\perp}} a_i$ with an independent copy $P_{u^{\perp}} \widetilde{a}_i$. From independence,
	it follows that
	\begin{align}
		\expec{\ip{a_i, \sol}_{-} \1\set{\ip{a_i, x} = 0}} & \geq
		\expec{\strictind\set{\ip{a_i, x} = 0} \cdot \ip{a_i, \sol}_{-}}                                                                               \notag   \\
		                                                   & =
		\expec{\strictind\set{\ip{a_i, u} = 0}} \expec{\ip{\widetilde{a}_i, P_{u^{\perp}}\sol}_{-}}                                                    \notag   \\
		                                                   & = \cancel{\prob{\ip{a_i, u} = 0}} \cdot \expec{\ip{\widetilde{a}_i, P_{u^{\perp}}\sol}_{-}} \notag \\
		                                                   & = 0,
		\label{eq:aiming-expec-ignore}
	\end{align}
	where $\prob{\ip{a_i, u} = 0}$ since $a_i$ is standard Gaussian and $u \neq 0$.
	In light of~\eqref{eq:aiming-expec-ignore}, we may ignore the second term in~\cref{lemma:aiming-inner-product}
	in lower-bounding the expectation, since that term has a nonnegative contribution.
	Continuing from the expression furnished by~\cref{lemma:aiming-inner-product}, we obtain
	\begin{align*}
		\ip{\partial f(x), x - \sol} & =
		\frac{1}{m} \sum_{i = 1}^m \exp(-\ip{a_i, x}) \abs{\ip{a_i, \sol - x}} \1\set{\ip{a_i, x} \geq 0} \\
		                             & =
		\frac{\norm{x - \sol}}{m} \sum_{i = 1}^m \exp(-\ip{a_i, u} \norm{x})
		\abs{\ip{a_i, v}} \1\set{\ip{a_i, u} \geq 0}                                                      \\
		                             & \geq
		\frac{\norm{x - \sol}}{m} \sum_{i = 1}^m \exp(-3\ip{a_i, u} \norm{\sol})
		\abs{\ip{a_i, v}} \1\set{\ip{a_i, u} \geq 0}                                                      \\
		                             & \overset{\mathclap{(d)}}{=}
		\frac{\norm{x - \sol}}{m}
		\sum_{i = 1}^m \expfun{-3\beta_i \norm{\sol}}
		\abs{\beta_i \ip{u, v} + \beta_i^{\perp} \norm{P_{u^{\perp}} v}}
		\1\set{\beta_i \geq 0},
	\end{align*}
	writing $u := \frac{x}{\norm{x}}$ and $v := \frac{\sol - x}{\norm{\sol - x}}$. Here,
	$\beta_i, \beta_i^{\perp} \iid \cN(0, 1)$ arise from the decomposition
	\begin{align*}
		\ip{a_i, v} & = \ip{uu^{\T} a_i, v} + \ip{(I - uu^{\T}) a_i, v}                                                                                              \\
		            & = \ip{a_i, u} \ip{u, v} + \ip{(I - uu^{\T}) a_i, v}                                                                                            \\
		            & \overset{\mathclap{(d)}}{=} \ip{a_i, u} \ip{u, v} + \ip{(I - uu^{\T}) \widetilde{a}_i, v} \qquad (a_i \indep \widetilde{a}_i \sim \cN(0, I_d)) \\
		            & = \ip{a_i, u} \ip{u, v} + \ip{\widetilde{a}_i, (I - uu^{\T}) v}                                                                                \\
		            & \overset{\mathclap{(d)}}{=} \beta_i \ip{u, v} + \beta_i^{\perp} \bignorm{(I - uu^{\T}) v},
	\end{align*}
	writing $\beta_i = \ip{a_i, u} \sim \cN(0, 1)$ and using the identity $\ip{\widetilde{a}_i, z} \sim \cN(0, \norm{z}^2)$;
	we also recognize $(I - uu^{\T}) = P_{u^{\perp}}$. We now consider two cases for the correlation $\ip{u, v}$:

	\paragraph{Case 1: $\ip{u, v} \geq 0$.}
	In this case, we may lower bound the sum by the following expression:
	\begin{align}
		 & \ip{\partial f(x), x - \sol}                                                                         \notag         \\ & \geq
		\frac{\norm{x - \sol}}{m} \sum_{ i = 1 }^m
		\exp(-3 \beta_i \norm{\sol})
		\abs{\beta_i \ip{u, v} + \beta_i^{\perp} \norm{P_{u^{\perp}} v}} \1\set{\beta_i \geq 0, \beta_i^{\perp} \geq 0} \notag \\
		 & =
		\frac{\norm{x - \sol}}{m}
		\sum_{i = 1}^m \exp(-3 \beta_i \norm{\sol})
		\left(\beta_i \abs{\ip{u, v}} + \beta_i^{\perp} \norm{P_{u^{\perp}} v}\right) \1\set{\beta_i \geq 0, \beta_i^{\perp} \geq 0}.
		\label{eq:aiming-nonnegative-correlation}
	\end{align}

	\paragraph{Case 2: $\ip{u, v} < 0$.}
	In this case, we may lower bound the sum by the following expression:
	\begin{align}
		 & \ip{\partial f(x), x - \sol}                                                                                   \notag         \\ & \geq
		\frac{\norm{x - \sol}}{m} \sum_{ i = 1 }^m
		\exp(-3 \beta_i \norm{\sol})
		\abs{\beta_i \ip{u, v} + \beta_i^{\perp} \norm{P_{u^{\perp}} v}} \1\set{\beta_i \geq 0, \beta_i^{\perp} \leq 0}           \notag \\
		 & =
		\frac{\norm{x - \sol}}{m}
		\sum_{i = 1}^m \exp(-3 \beta_i \norm{\sol})
		\left(\beta_i (-\ip{u, v}) - \beta_i^{\perp} \norm{P_{u^{\perp}} v}\right) \1\set{\beta_i \geq 0, \beta_i^{\perp} \leq 0} \notag \\
		 & \overset{\mathclap{(d)}}{=}
		\frac{\norm{x - \sol}}{m}
		\sum_{i = 1}^m \exp(-3 \beta_i \norm{\sol})
		\left( \beta_i \abs{\ip{u, v}} + \widetilde{\beta}^{\perp}_i \norm{P_{u^{\perp}} v} \right)
		\1\set{\beta_i, \widetilde{\beta}^{\perp}_i \geq 0},
		\label{eq:aiming-nonpositive-correlation}
	\end{align}
	where $\widetilde{\beta}_i \sim \cN(0, 1) \indep \beta_i$,
	using the fact that the expression inside the absolute value is nonpositive.
	Since the sum in~\eqref{eq:aiming-nonpositive-correlation} is distributionally identical
	to~\eqref{eq:aiming-nonnegative-correlation}, it suffices to study
	\begin{equation}
		(\natural) := \frac{1}{m}
		\sum_{i = 1}^m \exp(-3\beta_i \norm{\sol})
		\left(\beta_i \abs{\ip{u, v}} + \beta_i^{\perp} \norm{P_{u^{\perp}} v}\right)
		\1\set{\beta_i, \beta_i^{\perp} \geq 0}.
		\label{eq:aiming-joint-lb}
	\end{equation}
	Taking expectations with respect to $\beta$ and $\beta^{\perp}$ and writing $\gamma := 3 \norm{\sol}$ for brevity, we obtain
	\begin{align*}
		 & \mathbb{E}_{\beta, \beta^{\perp}}\left[
			\exp(-\gamma \beta) \left( \beta \abs{\ip{u, v}} + \beta^{\perp} \norm{P_{u^{\perp}} v} \right) 1\set{\beta, \beta^{\perp} \geq 0}
		\right]                                                                                  \\
		 & =
		\abs{\ip{u, v}} \cdot \mathbb{E}_{\beta}\left[
			\exp(-\gamma \beta) \beta_+
			\right] \cdot \prob{\beta^{\perp} \geq 0} +
		\norm{P_{u^{\perp}} v} \mathbb{E}_{\beta^{\perp}}[\beta^{\perp}_+]
		\cdot \mathbb{E}_{\beta}\left[\exp(-\gamma \beta) \1\set{\beta \geq 0}\right]            \\
		 & =
		\frac{\abs{\ip{u, v}}}{4}
		\left(
		\sqrt{\frac{2}{\pi}} - \gamma \exp\left(\frac{\gamma^2}{2}\right) \erfc\left(
		\frac{\gamma}{\sqrt{2}}
		\right)
		\right)
		+ \frac{\norm{P_{u^{\perp}} v}}{4}
		\sqrt{\frac{2}{\pi}} \exp\left(\frac{\gamma^2}{2}\right) \erfc\left(
		\frac{\gamma}{\sqrt{2}}
		\right)                                                                                  \\
		 & \geq
		\frac{\abs{\ip{u, v}}}{4(1 + \pi \gamma^2)} \cdot \sqrt{\frac{2}{\pi}} +
		\frac{\norm{P_{u^{\perp}} v}}{4} \sqrt{\frac{2}{\pi}} \exp\left(\frac{\gamma^2}{2}\right)
		\erfc\left(\frac{\gamma}{\sqrt{2}}\right)                                                \\
		 & =
		\frac{\abs{\ip{u, v}}}{4(1 + \pi \gamma^2)} \cdot \sqrt{\frac{2}{\pi}} +
		\frac{\norm{P_{u^{\perp}} v}}{4\gamma} \sqrt{\frac{2}{\pi}} \cdot \gamma \exp\left(\frac{\gamma^2}{2}\right)
		\erfc\left(\frac{\gamma}{\sqrt{2}}\right)                                                \\
		 & \geq
		\sqrt{\frac{1}{8 \pi}} \cdot \min\set{\frac{1}{1 + \pi \gamma^2}, \frac{1}{2 \gamma}}
		\left(
		\abs{\ip{u, v}} + \norm{P_{u^{\perp}} v}
		\right)                                                                                  \\
		 & \geq
		\sqrt{\frac{1}{8 \pi}} \cdot \frac{1}{1 + \pi \gamma^2} \norm{P_{u} v + P_{u^{\perp}} v} \\
		 & = \sqrt{\frac{1}{8 \pi}} \frac{1}{1 + \pi \gamma^2},
	\end{align*}
	using~\cref{lemma:exponential-gaussian-integrals} in the second equality, \cref{lemma:xerfcx-ub} in the first inequality, \cref{lemma:xerfcx-increasing}
	and the bound $\gamma \geq 1$ in the second inequality, the fact that $\abs{\ip{u, v}} = \norm{P_{u} v}$
	and the triangle inequality in the last inequality, and $v \in \mathbb{S}^{d-1}$ in the last step.
	As a result, we have
	\begin{equation*}
		\mathbb{E}[\ip{\partial f(x), x - \sol}] \geq
		\frac{\norm{x - \sol}}{\sqrt{8 \pi}\left(1 + 9 \pi \norm{\sol}^2\right)},
		\quad \text{for any $x \in \cB(\bm{0}; 3 \norm{\sol})$.}
	\end{equation*}
	This is precisely the claimed lower bound, and thus completes the proof.
\end{proof}

\subsubsection{Proof of~\cref{prop:aiming-unif-lb}}
\begin{proof}
	Our proof relies on the generic chaining technique due to Talagrand~\cite{Talagrand14}, using
	the refinements of~\cite{Dirksen2015}. Setting the stage, consider a random process indexed
	by a set $\cT$, $(Z_t)_{t \in \cT}$. We say that $(Z_t)$ has \emph{mixed tails} with respect
	to metrics $(d_1, d_2)$ if
	\begin{equation}
		\prob{\abs{Z_t - Z_{s}} \geq u d_1(s, t) + \sqrt{u} d_2(s, t)} \leq e^{-u}.
		\label{eq:mixed-tail}
	\end{equation}
	Under this condition, \cite[Theorem 3.5]{Dirksen2015} furnishes the following uniform bound:
	for any $t_0 \in \cT$, it holds that
	\begin{equation}
		\prob{\sup_{t \in \cT} \abs{Z_t - Z_{t_0}} \geq C (\gamma_1(\cT, d_1) + \gamma_2(\cT, d_2))
			+ u \, \diam_{d_1}(\cT) + \sqrt{u} \, \diam_{d_2}(\cT)} \leq e^{-u},
		\label{eq:uniform-mixed-tail}
	\end{equation}
	where $\diam_{d}(\cT) := \sup_{s, t \in \cT} d(s, t)$ and $\gamma_{\alpha}(\cT, d)$
	is the Talagrand's \emph{$\gamma$-functional}:
	\[
		\gamma_{\alpha}(\cT, d) = \inf_{(\cT_k)} \sup_{t \in \cT} \sum_{k = 0}^{\infty} 2^{\sfrac{k}{\alpha}} d(t, \cT_k), \qquad
		\cT_k \in \set{\cH \subset \cT \mid \abs{\cH} \leq 2^{2^k}}.
	\]
	The first step in our proof is to verify condition~\eqref{eq:mixed-tail} for the random process
	at hand.
	\begin{claim}[Mixed tail condition]
		\label{claim:mixed-tail}
		For any $(u, v) \in \Sbb^{d-1} \times \Sbb^{d-1}$ and $i \in [m]$, define the process
		\begin{equation}
			Z_{i}(u, v) := \expfun{-3\ip{a_i, u} \norm{\sol}} \abs{\ip{a_i, v}} \1\set{\ip{a_i, u} > 0}
			+ \strictind\set{\ip{a_i, u} = 0} \ip{a_i, \sol}_{-}.
		\end{equation}
		Then the process $Z(u, v) := \frac{1}{m} \sum_{i = 1}^m Z_{i}(u, v) - \expec{Z_i(u, v)}$ satisfies~\eqref{eq:mixed-tail} with
		\[
			d_1 = \frac{(1 + 3 \norm{\sol}) \deuc}{m}, \quad d_2 = \frac{(1 + 3 \norm{\sol}) \deuc}{\sqrt{m}},
		\]
		where $\deuc$ denotes the Euclidean metric on $\Rbb^{d} \times \Rbb^{d}$.
	\end{claim}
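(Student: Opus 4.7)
The plan is to establish the mixed-tail condition~\eqref{eq:mixed-tail} via a Bernstein-type deviation inequality applied to the centered average of i.i.d.\ increments. The key step is to bound each per-sample increment $Z_i(u,v) - Z_i(s,t)$ in an Orlicz norm whose size is proportional to $(1 + 3\norm{\sol}) \deuc((u,v), (s,t))$; once this is achieved, standard Bernstein-type bounds for sums of centered subexponential random variables furnish the mixed-tail bound with the claimed $d_1, d_2$. Before proceeding, observe that the second summand in the definition of $Z_i$ vanishes almost surely, since $\prob{\ip{a_i, u} = 0} = 0$ for any $u \neq \mathbf{0}$; this term can therefore be dropped in the analysis.

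I will split the increment along its two coordinates: $Z_i(u,v) - Z_i(s,t) = [Z_i(u,v) - Z_i(u,t)] + [Z_i(u,t) - Z_i(s,t)]$. The first summand is dominated in absolute value by $\abs{\ip{a_i, v - t}}$, a centered Gaussian with $\psi_2$-norm $\lesssim \norm{v - t}$. The second summand factors as $\abs{\ip{a_i, t}}$ times a bracket comparing the exponential-indicator products at $u$ and $s$. Using the identity
\begin{equation*}
    \expfun{-3z\norm{\sol}}\1\set{z > 0} = \expfun{-3z_+\norm{\sol}} - \1\set{z \leq 0},
\end{equation*}
I split the bracket into a smooth Lipschitz part and an indicator-jump part. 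The smooth part exploits that $z \mapsto \expfun{-3z_+\norm{\sol}}$ is $3\norm{\sol}$-Lipschitz, yielding a contribution bounded by $3\norm{\sol} \abs{\ip{a_i, t}} \cdot \abs{\ip{a_i, u - s}}$; as a product of two subgaussians, this is subexponential with $\psi_1$-norm $\lesssim \norm{\sol}\norm{u - s}$.

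The main obstacle is the indicator-jump contribution $\abs{\ip{a_i, t}} \cdot \abs{\1\set{\ip{a_i, u} \leq 0} - \1\set{\ip{a_i, s} \leq 0}}$, supported on the event $E_{us}$ that $\ip{a_i, u}$ and $\ip{a_i, s}$ have opposite signs. I will exploit the explicit Gaussian geometry: for unit vectors $u, s$, the standard bivariate Gaussian probability formula gives $\prob{E_{us}} = \arccos(\ip{u, s})/\pi \lesssim \norm{u - s}$. Moreover, on $E_{us}$ both $\abs{\ip{a_i, u}}$ and $\abs{\ip{a_i, s}}$ are dominated by $\abs{\ip{a_i, u - s}}$, which, via the conditional-Gaussian decomposition of $\ip{a_i, t}$ along the span of $(u, s)$ and its orthogonal complement, controls the contribution of the $(u,s)$-correlated part of $\ip{a_i, t}$ on this event. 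A careful moment calculation combining this geometric estimate with Gaussian tail bounds then yields $\norm{\abs{\ip{a_i, t}} \1\set{E_{us}}}_{\psi_1} \lesssim \norm{u - s}$. The restriction of the index set to pairs of unit vectors, consistent with the reparametrization $u = x/\norm{x}$, $v = (\sol - x)/\norm{\sol - x}$ used in the proof of~\cref{lemma:aiming-lb}, is essential for this step.

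Combining the three bounds yields $\norm{Z_i(u,v) - Z_i(s,t)}_{\psi_1} \lesssim (1 + 3\norm{\sol}) \deuc((u,v), (s,t)) =: K$. Applying Bernstein's inequality for sums of independent centered subexponential random variables (e.g.,~\cite[Theorem 2.8.1]{Vershynin18}) to $Z(u,v) - Z(s,t) = \frac{1}{m}\sum_{i=1}^m [\text{centered per-sample increment}]$ produces
\begin{equation*}
    \prob{\abs{Z(u,v) - Z(s,t)} \geq r} \leq 2 \expfun{-c \min\left(\frac{mr^2}{K^2}, \frac{mr}{K}\right)}.
\end{equation*}
Setting $r = u K/m + \sqrt{u} K/\sqrt{m}$ and simplifying recovers the mixed-tail form~\eqref{eq:mixed-tail} with $d_1 = K/m$ and $d_2 = K/\sqrt{m}$, completing the proof of the claim.
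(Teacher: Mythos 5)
Your proposal follows the same skeleton as the paper's proof: split the per-sample increment coordinate-wise, bound it in $\psi_1$ by a multiple of $(1+3\norm{\sol})\deuc((u,v),(\bar u,\bar v))$, apply Bernstein's inequality to the centered average, and substitute $r = t\, d_1 + \sqrt{t}\, d_2$ to obtain the mixed-tail form. Dropping the almost-surely-vanishing term $\strictind\set{\ip{a_i,u}=0}\ip{a_i,\sol}_{-}$ and bounding the $v$-increment by $\abs{\ip{a_i, v-\bar v}}$ also match the paper. Where you diverge is the $u$-increment: the paper bounds $\bigl(e^{-3\norm{\sol}\ip{a_i,u}}\1\set{\ip{a_i,u}\ge 0} - e^{-3\norm{\sol}\ip{a_i,\bar u}}\1\set{\ip{a_i,\bar u}\ge 0}\bigr)\abs{\ip{a_i,\bar v}}$ directly through a Lipschitz estimate on $z\mapsto e^{-3\norm{\sol}z}\1\set{z\ge 0}$ and never forms a jump term, whereas you isolate the discontinuity at $z=0$ and produce the extra contribution $\abs{\ip{a_i,t}}\,\1\set{E_{us}}$ supported on the sign-disagreement event.

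The genuine gap is your key estimate $\norm{\abs{\ip{a_i,t}}\,\1\set{E_{us}}}_{\psi_1}\lesssim \norm{u-s}$: it is false. Orlicz norms do not scale linearly with the probability of the supporting event; truncation to a rare event improves them only logarithmically. Take $t$ orthogonal to $\mathrm{span}(u,s)$, so that $g:=\ip{a_i,t}\sim\cN(0,1)$ is independent of $E_{us}$, and set $p := \prob{E_{us}} = \arccos(\ip{u,s})/\pi \asymp \norm{u-s}$. For any $c>0$ one has $\expec{\exp(\abs{g}\1\set{E_{us}}/c)} \ge (1-p) + p\,e^{1/(2c^2)}$, and forcing this to be at most $2$ requires $c \ge 1/\sqrt{2\log(1+1/p)}$; equivalently, $(\expec{\abs{g}^k \1\set{E_{us}}})^{1/k} = p^{1/k}(\expec{\abs{g}^k})^{1/k}$ gives $\norm{\abs{g}\1\set{E_{us}}}_{\psi_1} \asymp 1/\sqrt{\log(1/p)}$, which is far larger than $\norm{u-s}$ when $\norm{u-s}$ is small. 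Your geometric observations (the domination $\abs{\ip{a_i,u}},\abs{\ip{a_i,s}}\le\abs{\ip{a_i,u-s}}$ on $E_{us}$ and the orthant-probability bound) control only the components of $a_i$ in $\mathrm{span}(u,s)$; the orthogonal component of $\ip{a_i,t}$ is independent of $E_{us}$ and is exactly where the claimed bound breaks, so no ``careful moment calculation'' can produce a $\psi_1$ bound proportional to $\norm{u-s}$ for this term. Consequently the Bernstein step no longer yields the mixed-tail condition with $d_1, d_2$ proportional to $\deuc$, and the proposal as written does not establish the claim; to proceed along your route the jump term would have to be handled by a mechanism other than a $\deuc$-proportional increment bound.
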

	\begin{proof}[Proof of~\cref{claim:mixed-tail}]
		Let $(u, v)$ and $(\bar{u}, \bar{v}) \in \Sbb^{d-1} \times \Sbb^{d-1}$ and
		consider the difference $Z_{i}(u,v) - Z_{i}(\bar{u}, \bar{v})$. Writing $\gamma := 3 \norm{\sol}$, we obtain
		\begin{align*}
			\norm{Z_{i}(u, v) - Z_{i}(\bar{u}, \bar{v})}_{\psi_1} & \leq
			\begin{aligned}[t]
				 & \norm{(\abs{\ip{a_i, v}} - \abs{\ip{a_i, \bar{v}}}) e^{-\ip{a_i, u} \gamma} \1\set{\ip{a_i, u} \geq 0}}_{\psi_1}                                                \\
				 & + \norm{(e^{-\ip{a_i, u} \gamma} \1\set{\ip{a_i, u} \geq 0} - e^{-\ip{a_i, \bar{u}} \gamma} \1\set{\ip{a_i, \bar{u}} \geq 0}) \abs{\ip{a_i, \bar{v}}}}_{\psi_1} \\
				 & + \norm{\ip{a_i, \sol}_{-} \left( \strictind\set{\ip{a_i, u} = 0 - \strictind\set{\ip{a_i, \bar{u}} = 0}} \right)}_{\psi_1}
			\end{aligned} \\
			                                                      & \leq
			\norm{\abs{\ip{a_i, v - \bar{v}}}}_{\psi_1} + \gamma \norm{\abs{\ip{a_i, u - \bar{u}}} \abs{\ip{a, \bar{v}}}}_{\psi_1}                                                         \\
			                                                      & \leq
			\norm{v - \bar{v}} + \norm{u - \bar{u}} \cdot \gamma                                                                                                                           \\
			                                                      & \lesssim
			(1 + \gamma) \norm{\bmx{u                                                                                                                                                      \\ v} - \bmx{\bar{u} \\ \bar{v}}},
		\end{align*}
		using the fact that the mapping $x \mapsto e^{-\gamma x} \1\set{x \geq 0}$ is $\gamma$-Lipschitz on $x \geq 0$, as well as the property
		$\norm{X Y}_{\psi_1} \leq \norm{X}_{\psi_2} \norm{Y}_{\psi_2}$~\cite[Lemma 2.7.7]{Vershynin18}; in particular,
		using the latter property implies that the last term in the decomposition satisfies
		\begin{align*}
			\norm{\ip{a_i, \sol}_{-} \left(\strictind\set{\ip{a_i, u} = 0} - \strictind\set{\ip{a_i, \bar{u}} = 0}\right)}_{\psi_1} & \lesssim
			\norm{\strictind\set{\ip{a_i, u} = 0}}_{\psi_2} +
			\norm{\strictind\set{\ip{a_i, \bar{u}} = 0}}_{\psi_2}
			\\
			                                                                                                                        & = 0,
		\end{align*}
		where the last line follows from~\cite[Proposition 2.5.2]{Vershynin18} and the identity
		\begin{align*}
			\norm{\strictind\set{\ip{a_i, u} = 0}}_{L^p} & =
			\left( \expec{(\strictind\set{\ip{a_i, u} = 0})^p} \right)^{\frac{1}{p}} \\
			                                             & =
			\left( \expec{\strictind \set{\ip{a_i, u} = 0}} \right)^{\frac{1}{p}}    \\
			                                             & = 0.
		\end{align*}
		From the Bernstein inequality~\cite[Corollary 2.8.3]{Vershynin18}, it follows that
		\begin{align}
			\prob{\abs{Z(u, v) - Z(\bar{u}, \bar{v})} \geq t} & \leq 2e^{-c \min\set{\frac{mt^2}{(1 + \gamma)^2 \deuc^2((u, v), (\bar{u}, \bar{v}))}, \frac{mt}{(1 + \gamma)\deuc((u, v), (\bar{u}, \bar{v}))}}}, \label{eq:berstein} \\
			\text{where} \quad Z(u, v)                        & := \frac{1}{m} \sum_{i = 1}^m Z_{i}(u, v) - \mathbb{E}[Z_{i}(u, v)].
			\notag
		\end{align}
		To argue that $Z(u, v)$ has mixed tails, we let
		\[
			d_1 = \frac{(1 + \gamma) \deuc}{m}, \;\; d_2 = \frac{(1 + \gamma) \deuc}{\sqrt{m}}, \;\;
			s = t \cdot d_1((u, v), (\bar{u}, \bar{v})) + \sqrt{t} \cdot d_2((u, v), (\bar{u}, \bar{v})).
		\]
		Substituting in~\eqref{eq:berstein}, and noting $s \geq t d_1((u, v), (\bar{u}, \bar{v}))$ and
		$s^2 \geq t d_2^2((u, v), (\bar{u}, \bar{v}))$, we obtain
		\begin{equation*}
			\prob{\abs{Z(u, v) - Z(\bar{u}, \bar{v})} \geq s} \leq 2\expfun{
				-c \min\set{\frac{s^2}{d_2^2}, \frac{s}{d_1}}
			} \leq 2e^{-c t}.
		\end{equation*}
		Relabeling and adjusting constants yields the mixed tail condition.
	\end{proof}
	With~\cref{claim:mixed-tail} at hand, we can invoke~\eqref{eq:uniform-mixed-tail} for
	$\cT := (\Sbb^{d-1} \times \Sbb^{d-1}) \cup \set{\bm{0}}$:
	\begin{equation}
		\prob{
			\sup_{u, v} \abs{Z(u, v)} \geq
			C (\gamma_1(\cT, d_1) + \gamma_2(\cT, d_2)) +
			t \, \diam_{d_1}(\cT) + \sqrt{t}\, \diam_{d_2}(\cT)
		} \leq 2e^{-t}.
		\label{eq:uniform-mixed-tail-ii}
	\end{equation}
	To simplify~\eqref{eq:uniform-mixed-tail-ii}, we bound the $\gamma$-functionals using
	Dudley's entropy integral method.

	\begin{claim}[Dudley bounds I]
		\label{claim:bound-gamma-1}
		Let $\cT := (\Sbb^{d-1} \times \Sbb^{d-1}) \cup \set{\bm{0}}$. We have
		\begin{equation}
			\diam_{d_1}(\cT) \lesssim \frac{(1 + 3 \norm{\sol})}{m}, \;\;
			\gamma_1(\cT, d_1) \lesssim \frac{(1 + 3 \norm{\sol}) d}{m}.
		\end{equation}
	\end{claim}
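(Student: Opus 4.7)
The plan is to handle the two bounds separately, with the diameter estimate serving as a warm-up for the covering-number calculation behind the $\gamma_1$ bound.

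For the diameter, I would first bound the Euclidean diameter of $\cT$: any two points in $\Sbb^{d-1} \times \Sbb^{d-1}$ are within Euclidean distance $\sqrt{\opnorm{u-\bar u}^2 + \opnorm{v-\bar v}^2} \leq \sqrt{4+4} = 2\sqrt{2}$ of each other, and the origin is within $\sqrt{2}$ of every point on the product sphere. Hence $\diam_{\deuc}(\cT) \leq 2\sqrt{2}$, and multiplying by the Lipschitz factor linking $d_1$ to $\deuc$ gives $\diam_{d_1}(\cT) \lesssim (1 + 3\norm{\sol})/m$.

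For $\gamma_1(\cT, d_1)$, I would invoke the Dudley-type majorization
\[
    \gamma_1(\cT, d) \lesssim \int_{0}^{\infty} \log N(\cT, d, \varepsilon)\, d\varepsilon,
\]
together with the standard volumetric estimate $N(\Sbb^{d-1}, \deuc, \varepsilon) \leq (3/\varepsilon)^{d}$ for $\varepsilon \in (0, 2]$. Tensorizing, $N(\Sbb^{d-1} \times \Sbb^{d-1}, \deuc, \varepsilon) \leq (C/\varepsilon)^{2d}$ for $\varepsilon \in (0, 2\sqrt{2}]$; adjoining the origin does not affect this bound up to constants. Because $d_1 = \frac{1 + 3\norm{\sol}}{m} \deuc$, covering numbers rescale as $N(\cT, d_1, \varepsilon) = N(\cT, \deuc, \frac{m\varepsilon}{1+3\norm{\sol}})$. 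Substituting and changing variables $u = m\varepsilon/(1+3\norm{\sol})$ yields
\[
    \gamma_1(\cT, d_1) \lesssim \frac{1 + 3\norm{\sol}}{m} \int_{0}^{C} 2d \log(C/u)\, du \lesssim \frac{(1+3\norm{\sol}) d}{m},
\]
which is precisely the claimed bound.

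I do not expect a real obstacle here: the calculation is a standard application of Dudley's inequality, and the only place where one must be careful is the rescaling of the covering number from $\deuc$ to $d_1$ and keeping track of the dependence on $\norm{\sol}$ through the Lipschitz factor. The appearance of the origin inside $\cT$ is harmless, since it only inflates the covering number by an additive constant that is absorbed into the $(C/\varepsilon)^{2d}$ estimate.
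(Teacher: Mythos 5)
Your proposal is correct and follows essentially the same route as the paper: both bound the diameter by rescaling the Euclidean diameter $2\sqrt{2}$ of the product of spheres by the factor $(1+3\norm{\sol})/m$, and both control $\gamma_1$ via the homogeneity of the metric together with Dudley's entropy-integral bound and the standard $(C/\varepsilon)^{2d}$ covering estimate for $\Sbb^{d-1}\times\Sbb^{d-1}$ (with the adjoined origin absorbed into constants). The only cosmetic difference is that you rescale inside the covering number by a change of variables while the paper pulls the scale factor out of $\gamma_1$ directly; these are equivalent.
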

	\begin{proof}[Proof of~\cref{claim:bound-gamma-1}]
		Using Dudley's entropy integral method yields
		\begin{align*}
			\gamma_1(\cT, d_1) & = \frac{1 + \gamma}{m} \gamma_1(\cT, \norm{\cdot})                                       \\
			                   & \lesssim \frac{1 + \gamma}{m} \int_{0}^{\infty} \log_+ \cN\left(\cT, u\right) \dd{u}     \\
			                   & = \frac{1 + \gamma}{m} \int_0^{\infty} \log_+ \cN(\cT, u) \dd{u}                         \\
			                   & = \frac{1 + \gamma}{m} \int_0^{1} \log \cN(\cT, u) \dd{u}                                \\
			                   & \lesssim \frac{(1 + \gamma) d}{m} \int_0^{1} \log \frac{1}{\varepsilon} \dd{\varepsilon} \\
			                   & = \frac{(1 + 3 \norm{\sol}) d}{m},
		\end{align*}
		using the fact that the $\epsilon$-covering number of the unit sphere is $(1 / \epsilon)^d$,
		thus the covering number of their Cartesian product is $(1 / \epsilon)^{2d}$. At the same time,
		\begin{align*}
			\diam_{d_1}(\cT) & = \sup_{s, t \in \cT} d_1(s, t)                               \\
			                 & = \frac{1 + 3 \norm{\sol}}{m} \sup_{s, t \in \cT} \deuc(s, t) \\
			                 & \leq \frac{1 + 3 \norm{\sol}}{m} \cdot
			\sqrt{
				\sup_{u, \bar{u} \in \Sbb^{d-1}} \norm{u - \bar{u}}^2 +
				\sup_{v, \bar{v} \in \Sbb^{d-1}} \norm{v - \bar{v}}^2
			}                                                                                \\
			                 & \leq
			\frac{2 \sqrt{2}(1 + 3 \norm{\sol})}{m},
		\end{align*}
		as expected. This completes the proof.
	\end{proof}

	\begin{claim}[Dudley bounds II]
		\label{claim:bound-gamma-2}
		Let $\cT := (\Sbb^{d-1} \times \Sbb^{d-1}) \cup \set{\bm{0}}$. We have
		\begin{equation}
			\diam_{d_2}(\cT) \lesssim \frac{(1 + 3 \norm{\sol})}{\sqrt{m}}, \;\;
			\gamma_2(\cT, d_2) \lesssim (1 + 3 \norm{\sol}) \sqrt{\frac{d}{m}}.
		\end{equation}
	\end{claim}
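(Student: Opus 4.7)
The plan is to mirror the argument used in Claim: Dudley bounds I, with the only structural change being that the $\gamma_2$-functional is controlled by Dudley's entropy integral with a square root instead of bare logarithm. Concretely, since $d_2 = \frac{(1 + 3\norm{\sol})}{\sqrt{m}} \deuc$, both quantities factor cleanly: $\diam_{d_2}(\cT) = \frac{1+3\norm{\sol}}{\sqrt{m}} \diam_{\deuc}(\cT)$ and $\gamma_2(\cT, d_2) = \frac{1+3\norm{\sol}}{\sqrt{m}} \gamma_2(\cT, \deuc)$, so I reduce to bounding the two quantities with respect to the unscaled Euclidean metric.

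The diameter bound is immediate: for any two points $(u,v), (\bar u, \bar v) \in \Sbb^{d-1} \times \Sbb^{d-1}$ one has $\deuc((u,v), (\bar u,\bar v)) = \sqrt{\norm{u - \bar u}^2 + \norm{v - \bar v}^2} \leq 2\sqrt{2}$, and appending the origin only changes the diameter by a constant factor (since each point $(u,v)$ on the product of spheres satisfies $\norm{(u,v)} = \sqrt{2}$). Hence $\diam_{d_2}(\cT) \lesssim \frac{1+3\norm{\sol}}{\sqrt{m}}$, as claimed.

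For the $\gamma_2$-functional, I invoke the Dudley-type majorizing measure bound
\[
\gamma_2(\cT, \deuc) \lesssim \int_0^{\infty} \sqrt{\log \cN(\cT, \deuc, u)}\, \mathrm{d}u,
\]
and use the standard volumetric covering estimate $\cN(\Sbb^{d-1}, u) \lesssim (3/u)^d$ for $u \in (0, 1]$, which via tensorization yields $\cN(\cT, \deuc, u) \lesssim (C/u)^{2d}$ for $u$ in the relevant range and $\cN(\cT, \deuc, u) = 1$ once $u$ exceeds the diameter. The integral then reduces to $\sqrt{2d} \int_0^C \sqrt{\log(C/u)}\, \mathrm{d}u$, which evaluates to an absolute constant times $\sqrt{d}$ after the substitution $t = \log(C/u)$, giving $\gamma_2(\cT, \deuc) \lesssim \sqrt{d}$.

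Combining the scaling with the two bounds yields the advertised estimates. No step is truly an obstacle here; the only thing to keep an eye on is the square-root entropy integral in the definition of $\gamma_2$ (as opposed to the plain logarithm appearing in $\gamma_1$), which is why the dimensional dependence improves from $d$ to $\sqrt{d}$ relative to Claim: Dudley bounds I.
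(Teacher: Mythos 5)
Your proposal is correct and follows essentially the same route as the paper: factor the scaling $(1+3\norm{\sol})/\sqrt{m}$ out of $d_2$, bound $\gamma_2(\cT,\deuc)$ by Dudley's entropy integral with the square-root of the log covering number of the product of spheres (giving $O(\sqrt{d})$), and bound the diameter directly by a constant. Your extra remark about appending the origin only perturbing the diameter by a constant is a harmless refinement of the same argument.
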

	\begin{proof}[Proof of~\cref{claim:bound-gamma-2}]
		Proceeding as in the proof of~\cref{claim:bound-gamma-1}, we obtain
		\begin{align*}
			\gamma_2(\cT, d_2) & = \frac{1 + \gamma}{\sqrt{m}} \gamma_2(\cT, \norm{\cdot})                                             \\
			                   & \lesssim \frac{1 + \gamma}{\sqrt{m}} \int_{0}^{\infty} \sqrt{\log \cN\left(\cT, u\right)} \dd{u}      \\
			                   & = \frac{1 + \gamma}{\sqrt{m}} \int_0^{\mathrm{diam}(\cT)} \sqrt{\log\cN(\cT, u)} \dd{u}               \\
			                   & \lesssim \frac{(1 + \gamma)\sqrt{d}}{m} \int_0^{1} \sqrt{\log \frac{1}{\varepsilon}} \dd{\varepsilon} \\
			                   & = (1 + 3 \norm{\sol}) \sqrt{\frac{d}{m}}.
		\end{align*}
		Similarly, to control the diameter under $d_2$ we obtain
		\begin{align*}
			\diam_{d_2}(\cT) & = \sup_{s, t \in \cT} d_2(s, t)                                      \\
			                 & = \frac{1 + 3 \norm{\sol}}{\sqrt{m}} \sup_{s, t \in \cT} \deuc(s, t) \\
			                 & \leq \frac{1 + 3 \norm{\sol}}{\sqrt{m}} \cdot
			\sqrt{
				\sup_{u, \bar{u} \in \Sbb^{d-1}} \norm{u - \bar{u}}^2 +
				\sup_{v, \bar{v} \in \Sbb^{d-1}} \norm{v - \bar{v}}^2
			}                                                                                       \\
			                 & \leq
			\frac{2 \sqrt{2}(1 + 3 \norm{\sol})}{\sqrt{m}},
		\end{align*}
		as expected. This completes the proof.
	\end{proof}
	Combining~\cref{eq:uniform-mixed-tail-ii,claim:bound-gamma-1,claim:bound-gamma-2}, we obtain
	\begin{align*}
		 & \prob{\sup_{u, v} \abs{Z(u, v)} \geq C(1 + 3 \norm{\sol}) \left(\sqrt{\frac{d}{m}} + \frac{d}{m}\right)}                                               \\
		 & \leq
		\prob{\sup_{u, v} \abs{Z(u, v)} \geq C \left(\gamma_1(\cT, d_1) + \gamma_2(\cT, d_2)\right) + d \cdot \diam_{d_1}(\cT) + \sqrt{d} \cdot \diam_{d_2}(\cT)} \\
		 & \leq
		2e^{-d}.
	\end{align*}
	By definition of $Z(u, v)$ and~\cref{lemma:aiming-lb}, it follows that
	\[
		\frac{1}{m} \sum_{i = 1}^m \exp(-3 \ip{a_i, u} \norm{\sol}) \abs{\ip{a_i, v}} \1\set{\ip{a_i, u} \geq 0}
		\geq \frac{1}{\sqrt{8 \pi}\left(1 + 9 \pi \norm{\sol}^2\right)} -
		C \left( \sqrt{\frac{d}{m}} + \frac{d}{m} \right),
	\]
	uniformly over $u, v \in \Sbb^{d-1}$, with probability $1 - 2e^{-d}$. Therefore, we conclude that
	\[
		\min_{v \in \partial f(x)} \ip{v, x - \sol} \geq
		\frac{\norm{x - \sol}}{4 \sqrt{\pi}\left(1 + 9 \pi \norm{\sol}^2\right)}, \;\;
		\forall x \in \cB(\bm{0}; 3 \norm{\sol}) \setD \set{\bm{0}},
	\]
	with probability at least $1 - 2e^{-d}$, as long as $m \gtrsim d \cdot (1 + 9 \pi \norm{\sol}^2)^2 \gtrsim d \norm{\sol}^4$.
\end{proof}

\subsection{Proofs from~\cref{sec:convergence}}

\subsubsection{Proof of~\cref{lemma:Aslow-implies-Bslow}}
\begin{proof}
	We have the following chain of inequalities:
	\begin{align*}
		\norm{x_{t+1} - \sol}^2 & \leq (1 - \rhoslow) \norm{x_t - \sol}^2     \\
		                        & \leq (1 - \rhoslow)^{t} \norm{x_1 - \sol}^2 \\
		                        & \leq (1 - \rhoslow)^t \norm{\sol}^2,
	\end{align*}
	using the definition of $\cA_{\slow}(j)$, \eqref{eq:x1-distance-reminder}, and $x_{0} = \bm{0}$. By the reverse triangle inequality,
	\[
		\norm{x_{t+1}} - \norm{\sol} \leq \norm{x_{t+1} - \sol} \leq (1 - \rhoslow)^{t/2} \norm{\sol}
		\implies
		\norm{x_{t+1}} \leq \norm{\sol}(1 + (1 - \rhoslow)^{t/2}) < 2 \norm{\sol},
	\]
	which proves the upper bound in $\cB_{\slow}(t+1)$. For the lower bound, we have
	\begin{align*}
		\norm{x_t} & = \sup_{u \in \Sbb^{d-1}} \ip{u, x_t}                                                                                         \\
		           & \geq \frac{1}{\norm{\sol}} \ip{\sol, x_t}                                                                                     \\
		           & = \frac{1}{\norm{\sol}} \left( \ip{\sol, \sol} + \ip{\sol, x_t - \sol} \right)                                                \\
		           & \geq \frac{1}{\norm{\sol}} \left(\norm{\sol}^2 - \norm{\sol} \norm{x_t - \sol} \right)                                        \\
		           & \geq \frac{1}{\norm{\sol}} \left(\norm{\sol}^2 - \norm{\sol} \norm{x_1 - \sol}\right)                                         \\
		           & \geq \frac{1}{\norm{\sol}} \norm{\sol}^2 \left(1 - \left(1 - \frac{1}{20 \sqrt{\pi} \norm{\sol}}\right)^{\sfrac{1}{2}}\right) \\
		           & = \frac{1}{40 \sqrt{\pi}},
	\end{align*}
	where the second inequality follows from Cauchy-Schwarz, the third inequality follows
	from $\norm{x_t - \sol} \leq \norm{x_1 - \sol}$, which is implied by the events $\set{\cA_{\slow}(j)}_{j \leq t}$,
	the penultimate inequality follows from~\eqref{eq:x1-distance-reminder}, and the last
	inequality follows from the identity $\sqrt{1 - x} \leq 1 - \frac{x}{2}$.
\end{proof}

\subsubsection{Proof of~\cref{lemma:allslow-implies-Aslow}}
\begin{proof}
	Recall that on the event $\cB_{\slow}(t)$,~\cref{prop:aiming-unif-lb,lemma:sharpness-from-aiming} imply that
	\[
		\begin{aligned}
			\ip{v_t, x_t - \sol} & \geq \mu \norm{x_t - \sol}, \;\; v_t \in \partial f(x_t), \\
			f(x_t)               & \geq \mu \min\set{\norm{x_{t} - \sol}, \norm{x_t}},
		\end{aligned}
	\]
	where $\mu := \frac{1}{4 \sqrt{\pi}(1 + 9 \pi \norm{\sol}^2)}$. At the same time,
	$\set{\cA_{\slow}(j)}_{j < t}$ and~\eqref{eq:x1-distance-reminder} yield
	\[
		\norm{x_{t} - \sol}^2 \leq (1 - \rhoslow)^{t-1} \norm{x_1 - \sol}^2
		\leq \norm{\sol}^2 \left(1 - \frac{1}{20 \sqrt{\pi} \norm{\sol}}\right).
	\]
	As a result,~\cref{lemma:minimum-irrelevant} implies the following lower bound:
	\begin{align}
		f(x_t) \geq \mu \min(\norm{x_t - \sol}, \norm{x_t}) \geq \mu \cdot \min\set{1, \frac{1}{40 \sqrt{\pi} \norm{\sol}}}
		\norm{x_t - \sol}.
		\label{eq:modified-sharpness-fval-only}
	\end{align}
	Proceeding with the convergence analysis, we obtain
	\begin{align*}
		\norm{x_{t+1} - \sol}^2 & = \norm{x_{t} - \eta \frac{f(x_t)}{\norm{v_t}^2} v_t - \sol}^2                             \\
		                        & =
		\norm{x_t - \sol}^2 + \frac{\eta^2 f(x_t)^2}{\norm{v_t}^2} - 2 \frac{\eta f(x_t)}{\norm{v_t}^2} \ip{v_t, x_t - \sol} \\
		                        & =
		\norm{x_t - \sol}^2 -
		\frac{\eta f(x_t)}{\norm{v_t}^2} \left(
		2\ip{v_t, x_t - \sol} - \eta f(x_t)
		\right)                                                                                                              \\
		                        & \leq
		\norm{x_t - \sol}^2 - \frac{\eta f(x_t)}{\norm{v_t}^2}
		\left(
		2 \mu \norm{x_t - \sol} - \mathsf{L} \eta \norm{x_t - \sol}
		\right)                                                                                                              \\
		                        & \leq
		\norm{x_t - \sol}^2 - \frac{\eta f(x_t) \mu}{\lip^2} \norm{x_t - \sol}                                               \\
		                        & \leq
		\norm{x_t - \sol}^2 \left(
		1 - \eta \left( \frac{\mu}{\lip} \right)^2
		\min\set{1, \frac{1}{40 \sqrt{\pi} \norm{\sol}}}
		\right)
		\\
		                        & =
		\norm{x_t - \sol}^2 \left(
		1 - \rhoslow
		\right),
	\end{align*}
	where the first inequality follows from the aiming inequality,
	the penultimate inequality follows from the requirement $\eta \leq \frac{\mu}{\mathsf{L}}$,
	and the last inequality follows from~\eqref{eq:modified-sharpness-fval-only}.
\end{proof}

\subsubsection{Proof of~\cref{lemma:Aslow-until-T0-implies-Bfast}}
\begin{proof}
	Iterating the inequality from the definition of $\cA_{\slow}(j)$, we obtain
	\begin{align*}
		\norm{x_{T_0} - \sol}^2 \leq
		\left(1 - \rhoslow\right)^{T_0} \norm{\sol}^2 <
		\exp(-T_0 \rhoslow) \norm{\sol}^2 \leq \frac{1}{4} \norm{\sol}^2,
	\end{align*}
	where the last step follows from the inequality $T_{0} \rhoslow \geq \log(4)$.
	Consequently,
	\begin{align*}
		\norm{x_{T_0}} & \geq
		\frac{1}{\norm{\sol}} \ip{\sol, x_{T_0}}                                             \\
		               & \geq
		\frac{1}{\norm{\sol}} \left(\norm{\sol}^2 - \norm{\sol} \norm{x_{T_0} - \sol}\right) \\
		               & >
		\frac{1}{\norm{\sol}} \norm{\sol}^2 \left(1 - \frac{1}{2}\right)                     \\
		               & =
		\frac{1}{2} \norm{\sol}.
	\end{align*}
	This proves the lower bound in $\cB_{\fast}(T_0)$; the upper bound follows
	from~\cref{lemma:Aslow-implies-Bslow}.
\end{proof}

\subsubsection{Proof of~\cref{lemma:Allfast-implies-Afast}}
\begin{proof}
	Note that by iterating the definition of $\cA_{\fast}(j)$, we have
	\begin{align*}
		\norm{x_{t} - \sol}^2 & \leq (1 - \rhofast)^{t - T_0} \norm{x_{T_0} - \sol}^2   \\
		                      & \leq (1 - \rhofast)^{t - T_0} \frac{1}{4} \norm{\sol}^2 \\
		                      & < \frac{1}{4} \norm{\sol}^2.
	\end{align*}
	Moreover, $\cB_{\fast}(t)$ implies $\norm{x_t} > \frac{1}{2} \norm{\sol} \geq \norm{x_t - \sol}$.
	Consequently, \cref{lemma:sharpness-from-aiming} yields
	\begin{equation}
		f(x_t) \geq \mu \min(\norm{x_t - \sol}, \norm{x_t}) = \mu \norm{x_t - \sol}.
		\label{eq:sharpness-fval}
	\end{equation}
	Repeating the convergence analysis from the proof of~\cref{lemma:allslow-implies-Aslow}, we obtain
	\begin{align*}
		\norm{x_{t+1} - \sol}^2 & \leq
		\norm{x_t - \sol}^2 - \frac{\eta f(x_t) \mu}{\lip^2} \norm{x_t - \sol} \\
		                        & \leq
		\norm{x_t - \sol}^2 \left(
		1 - \eta \left( \frac{\mu}{\lip} \right)^2
		\right)                                                                \\
		                        & =
		\norm{x_t - \sol}^2 (1 - \rhofast),
	\end{align*}
	where we invoke~\eqref{eq:sharpness-fval} in the last inequality.
\end{proof}

\subsubsection{Proof of~\cref{lemma:Afast-implies-Bfast}}
\begin{proof}
	The analysis mirrors that of~\cref{lemma:Aslow-implies-Bslow}. First, note that
	\begin{align*}
		\norm{x_{t+1} - \sol}^2 & \leq (1 - \rho) \norm{x_t - \sol}^2                   \\
		                        & \leq (1 - \rho)^{t - T_0 + 1} \norm{x_{T_0} - \sol}^2 \\
		                        & < \frac{1}{4} \norm{\sol}^2,
	\end{align*}
	which implies that $\norm{x_{t+1}} > \frac{1}{2} \norm{\sol}$ by a similar argument
	to that used in~\cref{lemma:Aslow-until-T0-implies-Bfast} and
	$\norm{x_{t+1}} < 2 \norm{\sol}$ by repeating the argument from~\cref{lemma:Aslow-implies-Bslow}
	and using $\norm{x_{T_0} - \sol} < \frac{1}{2} \norm{\sol}$.
\end{proof}

\subsubsection{Proof of~\cref{prop:adpolyak-termination}}
\begin{proof}
	Suppose that $\condnum$ is a power of $2$ for simplicity.
	For any $i \geq i_{\star} := \log_{2} (\condnum)$, we have
	\[
		\eta_i := {\condest_i^{-1}} = 2^{-i} \leq \condnum^{-1}.
	\]
	By~\cref{theorem:main}, any call
	to~\cref{alg:polyaksgm} with step-size
	$\eta_i = \condest^{-1}$ running for $T_i$ iterations, where $T_i$ is defined in~\cref{op:iterations} of~\cref{alg:adpolyak},
	will ``succeed'', i.e., return a point $x_i$ satisfying
	\[
		f(x_i) \leq \kappa_i \left(1 - \frac{1}{\kappa_{i}}\right)^{\frac{T_i}{2}} f(x_0)
		\leq \varepsilon f(x_0).
	\]
	Moreover, it is clear that~\cref{alg:adpolyak} will terminate
	as soon as a call to~\cref{alg:polyaksgm} succeeds.
	Therefore, the total number of outer loops in~\cref{alg:adpolyak} is $i_{\star} = \log_{2}(\condnum)$. \\

	To account for the total number of subgradient evaluations, we calculate
	\begin{align*}
		\sum_{j = 0}^{i_{\star}} T_{j} & =
		\sum_{j = 0}^{\log_{2}(\condnum)} 2 \condest_{j}^3 \log \left(\frac{\condest_{j}}{\varepsilon}\right) + 7 \condest_i^{\frac{7}{2}} \\
		                               & =
		2 \cdot \left[ \sum_{j = 0}^{\log_{2}(\condnum)} 2^{3j} \log\left(\frac{2^{j}}{\varepsilon}\right) + 2^{\frac{7j}{2}} \right]      \\
		                               & \leq
		2 \log\left(\frac{\condnum}{\varepsilon}\right) \cdot
		\sum_{j = 0}^{\log_{2}(\condnum)} 8^{j} + 7 \sum_{j = 0}^{\log_2(\condnum)} \left( 2^{\frac{7}{2}} \right)^j                       \\
		                               & =
		2 \log\left(\frac{\condnum}{\varepsilon}\right) \cdot \frac{1}{7} \left(8 \cdot \condnum^3 - 1\right)
		+ 7 \cdot \frac{\left( 2^{\frac{7}{2}} \right)^{\log_2(2 \condnum)} - 1}{2^{\frac{7}{2}} - 1}                                      \\
		                               & \leq
		3 \condnum^3 \log\left(\frac{\condnum}{\varepsilon}\right) + 8 \condnum^{\frac{7}{2}},
	\end{align*}
	using the summation identity for finite geometric series in the penultimate step.
\end{proof}

\section{Technical results}

\begin{lemma}
	\label{lemma:exponential-gaussian-integrals}
	For $X \sim \cN(0, 1)$, we have that
	\begin{subequations}
		\begin{align}
			\expec{e^{-cX} \1\set{X \geq 0}} & =
			\frac{1}{2} \expfun{\frac{c^2}{2}}
			\erfc\left(\frac{c}{\sqrt{2}}\right),
			\label{eq:exp-pos-integral}          \\
			\expec{e^{-cX} X_+}              & =
			\frac{1}{2} \left(
			\sqrt{\frac{2}{\pi}} -
			c \expfun{-\frac{c^2}{2}} \erfc\left(\frac{c}{\sqrt{2}}\right)
			\right).
			\label{eq:exp-max-integral}
		\end{align}
	\end{subequations}
\end{lemma}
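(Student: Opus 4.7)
The plan is to establish both identities by direct Gaussian integration; the key maneuver is to complete the square in the exponent so that the integrands become shifted Gaussian densities, which can then be related to the complementary error function $\erfc$ via~\eqref{eq:erfc-defn}.

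For~\eqref{eq:exp-pos-integral}, I would begin by writing the expectation as
\[
\expec{e^{-cX} \1\set{X \geq 0}} = \frac{1}{\sqrt{2\pi}} \int_0^{\infty} \expfun{-cx - \frac{x^2}{2}} \dd{x},
\]
complete the square via $-cx - \frac{x^2}{2} = -\frac{1}{2}(x+c)^2 + \frac{c^2}{2}$ to pull $\expfun{c^2/2}$ outside the integral, and substitute $u = (x+c)/\sqrt{2}$. This reduces the remaining integral to $\sqrt{2}\int_{c/\sqrt{2}}^{\infty} e^{-u^2}\dd{u}$, which by~\eqref{eq:erfc-defn} equals $\frac{\sqrt{\pi}}{2} \erfc(c/\sqrt{2})$. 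Collecting constants yields the claim.

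For~\eqref{eq:exp-max-integral}, the same completion of the square gives
\[
\expec{e^{-cX} X_+} = \frac{\expfun{c^2/2}}{\sqrt{2\pi}} \int_0^{\infty} x \expfun{-\frac{(x+c)^2}{2}} \dd{x}.
\]
After the shift $u = x + c$, writing $x = u - c$ decomposes the integral into two elementary pieces: the first, $\int_c^{\infty} u \expfun{-u^2/2} \dd{u}$, integrates directly to $\expfun{-c^2/2}$ via the antiderivative $-\expfun{-u^2/2}$, while the second, $\int_c^{\infty} \expfun{-u^2/2} \dd{u}$, reduces to $\sqrt{\pi/2} \cdot \erfc(c/\sqrt{2})$ by the same change of variables used above. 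Combining the two pieces, after multiplying by the prefactor $\expfun{c^2/2}/\sqrt{2\pi}$ and simplifying, yields the desired identity.

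Both identities follow from entirely routine calculus, so there is no real obstacle in the argument; the only thing to track carefully is the normalization between Gaussian tail integrals and $\erfc$ as specified in~\eqref{eq:erfc-defn}, along with the sign conventions produced by completing the square.
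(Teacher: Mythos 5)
Your proposal is correct and follows essentially the same route as the paper: complete the square, shift variables, and handle the second expectation by splitting off the $-c$ piece, which reduces to the first identity. One caveat: carrying out your final simplification gives $\tfrac{1}{2}\bigl(\sqrt{2/\pi} - c\, e^{+c^2/2}\, \erfc(c/\sqrt{2})\bigr)$, i.e.\ with exponent $+c^2/2$ rather than the $-c^2/2$ printed in~\eqref{eq:exp-max-integral}; the paper's own derivation (and the way the lemma is invoked in the proof of~\cref{lemma:aiming-lb}) confirm that $+c^2/2$ is the correct exponent, so the displayed statement contains a sign typo and your computation proves the corrected identity --- you should flag this rather than assert that the algebra ``yields the desired identity'' exactly as written.
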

\begin{proof}
	The first expectation is the integral
	\begin{align*}
		\expec{e^{-cX} \1\set{X \geq 0}} & =
		\int_{0}^{\infty} \frac{1}{\sqrt{2 \pi}} \expfun{-\frac{x^2}{2} - cx} \dd{x}                                                     \\
		                                 & =
		\int_{0}^{\infty} \frac{1}{\sqrt{2 \pi}} \expfun{-\left(\frac{x^2}{2} + cx + \frac{c^2}{2}\right)} \expfun{\frac{c^2}{2}} \dd{x} \\
		                                 & =
		\expfun{\frac{c^2}{2}} \int_{0}^{\infty} \frac{1}{\sqrt{2 \pi}} \expfun{-\frac{(x + c)^2}{2}} \dd{x}                             \\
		                                 & =
		\expfun{\frac{c^2}{2}} \int_{\frac{c}{\sqrt{2}}}^{\infty} \frac{1}{\sqrt{\pi}} \expfun{-z^2} \dd{z}
		\qquad (z \gets \frac{x + c}{\sqrt{2}})                                                                                          \\
		                                 & =
		\expfun{\frac{c^2}{2}} \frac{1}{2} \cdot \frac{2}{\sqrt{\pi}} \int_{\frac{c}{\sqrt{2}}}^{\infty} \expfun{-z^2} \dd{z}            \\
		                                 & =
		\frac{1}{2} \expfun{\frac{c^2}{2}} \erfc\left(\frac{c}{\sqrt{2}}\right),
	\end{align*}
	using~\eqref{eq:erfc-defn} in the last equality; this proves~\eqref{eq:exp-pos-integral}. The second expectation is
	\begin{align*}
		\expec{e^{-cX} X_+} & =
		\int_{0}^{\infty} \frac{x}{\sqrt{2 \pi}} \expfun{-\frac{x^2}{2} - cx} \dd{x}           \\
		                    & =
		\int_{0}^{\infty} \frac{(x + c)}{\sqrt{2 \pi}} \expfun{-\frac{x^2}{2} - cx} \dd{x}
		- c \cdot \int_{0}^{\infty} \frac{1}{\sqrt{2 \pi}} \expfun{-\frac{x^2}{2} - cx} \dd{x} \\
		                    & =
		\expfun{\frac{c^2}{2}}
		\int_{\frac{c}{\sqrt{2}}}^{\infty} z \cdot \sqrt{\frac{2}{\pi}} \expfun{-z^2} \dd{z}
		- \frac{c}{2} \expfun{\frac{c^2}{2}} \erfc\left(\frac{c}{\sqrt{2}}\right),
	\end{align*}
	recognizing the integral from~\eqref{eq:exp-pos-integral} in the second term. Note that
	\[
		\frac{\dd{}}{\dd{z}} e^{-z^2} =
		-2ze^{-z^2} \implies
		\int_{0}^{\infty} z \cdot \sqrt{\frac{2}{\pi}} e^{-z^2} \dd{z} =
		-\sqrt{\frac{2}{\pi}} \frac{1}{2} \left\{
		e^{-z^2}
		\right\}_{\frac{c}{\sqrt{2}}}^{\infty} =
		\frac{1}{2} \expfun{-\frac{c^2}{2}} \sqrt{\frac{2}{\pi}},
	\]
	cancelling out the leading $\exp(c^2 / 2)$; this proves~\eqref{eq:exp-max-integral}.
\end{proof}

\begin{lemma}
	\label{lemma:erfcx-increasing}
	The function $f(x) := \exp(x^2) \cdot \erfc(x)$ is monotone decreasing for $x \geq 0$.
\end{lemma}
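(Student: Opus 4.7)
The plan is to differentiate $f(x) = e^{x^2} \erfc(x)$ and show $f'(x) \leq 0$ for all $x \geq 0$.
Using the definition of $\erfc$ in~\eqref{eq:erfc-defn}, we have $\erfc'(x) = -\frac{2}{\sqrt{\pi}} e^{-x^2}$, so
\[
    f'(x) = 2x\, e^{x^2} \erfc(x) - \frac{2}{\sqrt{\pi}}.
\]
Thus the claim reduces to the inequality
\[
    x\, e^{x^2} \erfc(x) \leq \frac{1}{\sqrt{\pi}}, \quad \text{for all } x \geq 0,
\]
or equivalently $\erfc(x) \leq \frac{e^{-x^2}}{x \sqrt{\pi}}$ for $x > 0$ (with the inequality trivial at $x = 0$).

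To establish this bound, I would use the standard substitution $u = x + t$ inside the integral defining $\erfc$:
\[
    \int_x^\infty e^{-u^2}\, du = e^{-x^2} \int_0^\infty e^{-2xt - t^2}\, dt
    \leq e^{-x^2} \int_0^\infty e^{-2xt}\, dt = \frac{e^{-x^2}}{2x},
\]
where the inequality drops the nonpositive term $-t^2$ in the exponent. Multiplying by $\tfrac{2}{\sqrt{\pi}}$ gives the desired bound on $\erfc(x)$, and substituting back yields $f'(x) \leq 0$ for $x > 0$; at $x = 0$ we directly compute $f'(0) = -\tfrac{2}{\sqrt{\pi}} < 0$. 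Hence $f$ is monotone decreasing on $[0, \infty)$.

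There is no real obstacle here: the only content is the classical Mills-ratio-type tail bound for the Gaussian, which follows from the one-line completion-of-square argument above.
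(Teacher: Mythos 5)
Your proof is correct and follows the same top-level strategy as the paper: differentiate to get $f'(x) = 2x\,e^{x^2}\erfc(x) - \frac{2}{\sqrt{\pi}}$ and then bound $\erfc$ from above. The difference lies in the bound used. The paper invokes the sharper standard estimate $\erfc(x) \leq \frac{2e^{-x^2}}{\sqrt{\pi}\left(x + \sqrt{x^2 + 4/\pi}\right)}$ (stated without proof), which is nontrivial uniformly on $x \geq 0$ and yields $f'(x) \leq 0$ in one stroke; you instead derive the cruder Mills-ratio bound $\erfc(x) \leq \frac{e^{-x^2}}{x\sqrt{\pi}}$ from scratch via the substitution $u = x+t$ and the estimate $e^{-t^2} \leq 1$. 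Your bound degenerates as $x \to 0$, which is why you (correctly) handle $x = 0$ separately by computing $f'(0) = -\frac{2}{\sqrt{\pi}} < 0$; with that done, your argument is complete and has the advantage of being fully self-contained, whereas the paper's is shorter but leans on an uncited inequality. One small remark: you conclude only $f'(x) \leq 0$, i.e.\ $f$ is nonincreasing; since dropping $-t^2$ in the exponent is a strict loss for $x > 0$, your estimate is in fact strict there, so strict decrease follows as well, matching the paper's strict inequality (and in any case nonincreasing suffices for how the lemma is used in \cref{corollary:loss-concentration-simpl}).
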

\begin{proof}
	The first derivative of $f$ is equal to
	\[
		f'(x) = 2x \cdot \exp(x^2) \erfc(x) - \frac{2}{\sqrt{\pi}}.
	\]
	We now use the inequality $\erfc(x) \leq \frac{2 e^{-x^2}}{\sqrt{\pi}(x + \sqrt{x^2 + 4/\pi})}$,
	which yields
	\begin{align*}
		f'(x) & \leq 2x \cdot \frac{2}{\sqrt{\pi}} \cdot \frac{1}{x + \sqrt{x^2 + 4/\pi}} - \frac{2}{\sqrt{\pi}} \\
		      & <
		\frac{4x}{\sqrt{\pi}} \cdot \frac{1}{2 x} - \frac{2}{\sqrt{\pi}}                                         \\
		      & \leq 0,
	\end{align*}
	which completes the proof of the claim.
\end{proof}

\begin{lemma}
	\label{lemma:xerfcx-increasing}
	The function $\varphi(x) := x \exp(x^2) \cdot \erfc(x)$ is monotone increasing for $x \geq 0$.
\end{lemma}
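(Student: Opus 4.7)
The plan is to compute $\varphi'(x)$ explicitly. Using $\frac{\dd}{\dd x}\erfc(x)=-\frac{2}{\sqrt\pi}e^{-x^2}$ together with the product rule, the derivative collapses to
\begin{equation*}
\varphi'(x) \;=\; (1+2x^2)\,e^{x^2}\erfc(x) \;-\; \frac{2x}{\sqrt\pi}.
\end{equation*}
At $x=0$ this equals $1>0$, so the entire task is to show the inequality $(1+2x^2)e^{x^2}\erfc(x)\geq \tfrac{2x}{\sqrt\pi}$ for all $x>0$.

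The slickest route avoids any case analysis by using the integral representation
\begin{equation*}
e^{x^2}\erfc(x) \;=\; \frac{2}{\sqrt\pi}\int_0^\infty e^{-t^2-2tx}\,\dd t,
\end{equation*}
obtained from the substitution $u=x+t$ in~\eqref{eq:erfc-defn}. Multiplying by $x$ gives $\varphi(x)=\tfrac{1}{\sqrt\pi}\int_0^\infty 2x\,e^{-t^2-2tx}\,\dd t$. Now I would use the elementary identity $2x\,e^{-t^2-2tx} = -\partial_t(e^{-t^2-2tx}) - 2t\,e^{-t^2-2tx}$, integrate the first term (which telescopes to $1$), and obtain
\begin{equation*}
\varphi(x) \;=\; \frac{1}{\sqrt\pi}\left(1 \;-\; 2\int_0^\infty t\,e^{-t^2-2tx}\,\dd t\right).
\end{equation*}

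Differentiating under the integral sign (which is justified by dominated convergence because the integrand and its $x$-derivative are bounded, uniformly on compact sets of $x\geq 0$, by an integrable function of $t$) then yields
\begin{equation*}
\varphi'(x) \;=\; \frac{4}{\sqrt\pi}\int_0^\infty t^2 e^{-t^2-2tx}\,\dd t \;\geq\; 0,
\end{equation*}
with strict inequality for every finite $x\geq 0$, so $\varphi$ is monotone increasing. The only potential stumbling block is justifying the interchange of differentiation and integration, but this is routine because $t^2e^{-t^2-2tx}\leq t^2 e^{-t^2}$ uniformly in $x\geq 0$ and the latter is integrable. As a sanity check, $\varphi(0)=0$ and $\varphi(x)\to \tfrac{1}{\sqrt\pi}$ as $x\to\infty$ via the standard asymptotic $\erfc(x)\sim e^{-x^2}/(\sqrt\pi x)$, consistent with monotone increase.
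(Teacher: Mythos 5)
Your proof is correct, but it takes a genuinely different route from the paper. The paper also starts from $\varphi'(x) = (2x^2+1)e^{x^2}\erfc(x) - \tfrac{2x}{\sqrt{\pi}}$, but then invokes the classical lower bound $e^{x^2}\erfc(x) \geq \tfrac{2}{\sqrt{\pi}\left(x + \sqrt{x^2+2}\right)}$ together with the elementary estimate $\sqrt{1+y} < 1 + \tfrac{y}{2}$ to force the bracketed quantity above $\tfrac{2x}{\sqrt{\pi}(2x^2+1)}$, treating $x = 0$ separately. You instead use the representation $e^{x^2}\erfc(x) = \tfrac{2}{\sqrt{\pi}}\int_0^\infty e^{-t^2 - 2tx}\,\dd t$, rewrite $\varphi(x) = \tfrac{1}{\sqrt{\pi}}\bigl(1 - 2\int_0^\infty t\,e^{-t^2-2tx}\,\dd t\bigr)$ via an exact integration of the total $t$-derivative, and differentiate under the integral sign to get $\varphi'(x) = \tfrac{4}{\sqrt{\pi}}\int_0^\infty t^2 e^{-t^2-2tx}\,\dd t > 0$; all steps check out (the boundary term evaluates to $1$, the dominating function $2t^2 e^{-t^2}$ is integrable and uniform in $x \geq 0$, and your limiting values $\varphi(0)=0$, $\varphi(x)\to\tfrac{1}{\sqrt{\pi}}$ are consistent). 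Your route buys a manifestly positive closed-form for $\varphi'$, strict monotonicity with no case split at $x=0$, and no reliance on the tabulated $\erfc$ inequality; the paper's route is purely algebraic (no interchange of differentiation and integration to justify) and reuses the same $\erfc$ bound that appears in the adjacent \cref{lemma:erfcx-increasing} and \cref{lemma:xerfcx-ub}, which keeps the appendix's toolkit uniform.
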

\begin{proof}
	The first derivative of $\varphi$ is
	\begin{align*}
		\varphi'(x) & = \exp(x^2) (2x^2 + 1) \erfc(x) - \frac{2x}{\sqrt{\pi}}
	\end{align*}
	Clearly, $\varphi'(0) > 0$. Now for any $x > 0$, we have
	\begin{align*}
		e^{x^2} \erfc(x) & \geq \frac{2}{\sqrt{\pi}(x + \sqrt{x^2 + 2})}          \\
		                 & =
		\frac{2}{\sqrt{\pi}} \frac{1}{x\left(1 + \sqrt{1 + \frac{2}{x^2}}\right)} \\
		                 & \overset{(\sharp)}{>}
		\frac{2}{x \sqrt{\pi}} \frac{1}{2 + \frac{1}{x^2}}                        \\
		                 & =
		\frac{2}{\sqrt{\pi}} \frac{1}{2x + \frac{1}{x}}                           \\
		                 & =
		\frac{2x}{\sqrt{\pi}} \frac{1}{2x^2 + 1},
	\end{align*}
	where $(\sharp)$ follows from $\sqrt{1 + x} < 1 + \frac{x}{2}$, valid
	for any $x > 0$. Finally, multiplying with $(2x^2 + 1)$
	and subtracting $2x / \sqrt{\pi}$ yields $\varphi'(x) > 0$ for any $x > 0$,
	completing the proof.
\end{proof}

\begin{lemma}
	\label{lemma:xerfcx-ub}
	For any $x \geq 1$, the following bound holds:
	\begin{equation}
		x \expfun{\frac{x^2}{2}} \erfc\left(\frac{x}{\sqrt{2}}\right) \leq
		\sqrt{\frac{2}{\pi}} \left(
		1 - \frac{1}{1 + \pi x^2}
		\right).
		\label{eq:xerfcx-ub}
	\end{equation}
\end{lemma}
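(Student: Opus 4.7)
The plan is to reduce the inequality to a standard Mills-ratio-type upper bound on $\erfc$ and then verify an elementary algebraic inequality. Specifically, I would invoke the classical Komatsu/Birnbaum upper tail bound on the Gaussian $Q$-function, which (translated to $\erfc$ via $\erfc(x/\sqrt{2}) = 2Q(x)$) reads
\[
\erfc\!\left(\frac{x}{\sqrt{2}}\right) \leq \frac{2\sqrt{2}}{\sqrt{\pi}} \cdot \frac{e^{-x^2/2}}{x + \sqrt{x^2 + 8/\pi}}, \qquad x \geq 0.
\]
The constant $8/\pi$ inside the square root is crucial: it is the choice that matches the second-order asymptotic $\erfc(x/\sqrt{2}) \sim \sqrt{2/\pi}\, e^{-x^2/2}(1/x - 1/x^3 + \dots)$ at infinity, which is exactly the regime where~\eqref{eq:xerfcx-ub} becomes tight. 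A weaker bound (e.g. with $4$ or $2$ replacing $8/\pi$) would fall short for large $x$.

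Multiplying both sides of the Komatsu bound by $x \exp(x^2/2)$ and factoring $\sqrt{2/\pi}$ shows that~\eqref{eq:xerfcx-ub} will follow as soon as
\[
\frac{2x}{x + \sqrt{x^2 + 8/\pi}} \leq \frac{\pi x^2}{1 + \pi x^2}.
\]
Cross-multiplying (all quantities are positive), isolating the square root, and squaring turns this into
\[
(2 + \pi x^2)^2 \leq \pi^2 x^4 + 8 \pi x^2,
\]
which upon expansion collapses to $4 \leq 4\pi x^2$, i.e. $x^2 \geq 1/\pi$. Since $x \geq 1 > 1/\sqrt{\pi}$, this holds and the reduction is complete.

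The main (and essentially only) obstacle is selecting a tail bound on $\erfc$ with the correct second-order behavior; once the Komatsu bound is in hand, the remaining steps are routine algebra. It is worth noting that the hypothesis $x \geq 1$ is slightly stronger than needed — the argument in fact gives~\eqref{eq:xerfcx-ub} for every $x \geq 1/\sqrt{\pi}$ — but this is irrelevant for the application in the proof of~\cref{lemma:aiming-lb}, where the lemma is invoked with $\gamma = 3\norm{\sol} \geq 3 \geq 1$.
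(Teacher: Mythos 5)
Your proof is correct and follows essentially the same route as the paper: both start from the Komatsu-type bound $\erfc(t) \leq \frac{2e^{-t^2}}{\sqrt{\pi}\left(t + \sqrt{t^2 + 4/\pi}\right)}$ evaluated at $t = x/\sqrt{2}$ (equivalent to your $8/\pi$ form), and then reduce to elementary algebra, with the only difference being that you cross-multiply and square directly while the paper invokes $\sqrt{1+y^2} \geq 1 + y^2/4$; both reductions boil down to the same condition $\pi x^2 \geq 1$, so your observation that $x \geq 1/\sqrt{\pi}$ suffices matches what the paper's argument actually uses as well.
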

\begin{proof}
	Starting from the known inequality
	\[
		\erfc(x) \leq \frac{2 e^{-x^2}}{\sqrt{\pi}(x + \sqrt{x^2 + \frac{4}{\pi}})},
	\]
	we successively obtain
	\begin{align*}
		x \exp(x^2 / 2) \erfc(x / \sqrt{2}) & \leq
		\frac{2x}{\sqrt{\pi}} \frac{1}{\frac{x}{\sqrt{2}} + \sqrt{\frac{x^2}{2} + \frac{4}{\pi}}} \\
		                                    & =
		\frac{2 \sqrt{2}}{\sqrt{\pi}} \frac{1}{1 + \sqrt{1 + \frac{8}{\pi x^2}}}                  \\
		                                    & \leq
		\frac{2 \sqrt{2}}{\sqrt{\pi}} \frac{1}{2 + \frac{2}{\pi x^2}}                             \\
		                                    & =
		\sqrt{\frac{2}{\pi}} \frac{1}{1 + \frac{1}{\pi x^2}},
	\end{align*}
	where the last inequality is due to the fact that
	\[
		\sqrt{1 + y^2} \geq 1 + \frac{y^2}{4}, \quad \forall y \in [0, 1].
	\]
	Finally, we rewrite the last fraction as
	\[
		\frac{1}{1 + \frac{1}{\pi x^2}} =
		\frac{1 + \frac{1}{\pi x^2}}{1 + \frac{1}{\pi x^2}} -
		\frac{\frac{1}{\pi x^2}}{1 + \frac{1}{\pi x^2}}
		= 1 - \frac{1}{\pi x^2 + 1}.
	\]
	This completes the proof of~\eqref{eq:xerfcx-ub}.
\end{proof}

\begin{lemma}
	\label{lemma:minimum-irrelevant}
	Suppose that $x$ satisfies $\norm{x - \sol}^2 < (1 - \delta) \norm{\sol}^2$. Then
	\begin{align*}
		\min\left(\norm{x - \sol}, \norm{x}, \dist(x, \cB^c(\bm{0}; 3 \norm{\sol}))\right)
		\geq \min\set{1, \frac{\delta}{2 \sqrt{1 - \delta}}} \cdot \norm{x - \sol}.
	\end{align*}
\end{lemma}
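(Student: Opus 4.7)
The plan is to bound each of the three quantities appearing in the minimum separately, all in terms of $\|x-\sol\|$, and then take the worst of the three constants that emerge. Throughout, write $r := \|x-\sol\|$ for brevity and note that the hypothesis $r^2 < (1-\delta)\|\sol\|^2$ is equivalent to $\|\sol\| > r/\sqrt{1-\delta}$.

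First I would handle the term $\|x-\sol\|$ itself: this contributes the factor $1$ to the minimum for free. Next, for $\|x\|$, I would apply the reverse triangle inequality to get $\|x\| \geq \|\sol\| - \|x-\sol\| > r/\sqrt{1-\delta} - r = r\,(1-\sqrt{1-\delta})/\sqrt{1-\delta}$. The only small manipulation needed is the identity $1-\sqrt{1-\delta} = \delta/(1+\sqrt{1-\delta}) \geq \delta/2$, which yields $\|x\| \geq r \cdot \delta/(2\sqrt{1-\delta})$. This is exactly the nontrivial constant appearing in the statement.

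For the third quantity, $\dist(x, \cB^c(\bm{0}; 3\|\sol\|))$, the hypothesis together with the triangle inequality gives $\|x\| \leq \|\sol\| + r < (1+\sqrt{1-\delta})\|\sol\| \leq 2\|\sol\|$, so $x$ lies strictly inside the ball of radius $3\|\sol\|$ and the distance to its complement is $3\|\sol\| - \|x\| \geq \|\sol\|$. Then $\|\sol\| > r/\sqrt{1-\delta} \geq r$, so this third term is at least $r$, i.e., it also clears the bound $\min\{1, \delta/(2\sqrt{1-\delta})\} \cdot r$ comfortably.

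Combining the three lower bounds gives exactly the stated inequality. I do not anticipate an obstacle here — the only subtlety is the elementary estimate $1-\sqrt{1-\delta} \geq \delta/2$, and verifying that the distance-to-complement is automatically the largest of the three, so it is the middle term $\|x\|$ that governs the final constant.
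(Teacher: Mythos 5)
Your proof is correct, and while it follows the same overall plan as the paper---bounding each of the three terms in the minimum by a multiple of $\norm{x-\sol}$---both of your sub-arguments take a more direct elementary route. For the term $\norm{x}$, the paper expands $\norm{x-\sol}^2$ to extract $\ip{x,\sol} \geq \frac{\delta}{2}\norm{\sol}^2$ and then uses $\norm{x} \geq \ip{x, \sol/\norm{\sol}}$, whereas you use only the reverse triangle inequality $\norm{x} \geq \norm{\sol} - \norm{x-\sol}$ together with the identity $1-\sqrt{1-\delta} = \delta/(1+\sqrt{1-\delta}) \geq \delta/2$; both routes land on the same constant $\delta/(2\sqrt{1-\delta})$, and under the hypothesis your intermediate bound on $\norm{x}$ is in fact no weaker than the paper's. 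For the distance term, the paper runs a chain of triangle inequalities through the projection onto $\cB^c(\bm{0}; 3\norm{\sol})$, while you simply observe $\norm{x} \leq \norm{\sol} + \norm{x-\sol} < 2\norm{\sol}$ and use the closed-form $\dist(x, \cB^c(\bm{0}; 3\norm{\sol})) = 3\norm{\sol} - \norm{x} \geq \norm{\sol} > \norm{x-\sol}$, which is shorter and equally rigorous. One cosmetic caveat: your closing remark that the distance-to-complement is ``automatically the largest of the three'' is neither needed nor true in general (when $\norm{x}$ is near $2\norm{\sol}$ the middle term exceeds it); what your argument actually establishes---and all that is required---is that the distance term is at least $\norm{x-\sol}$, so it never determines the final constant.
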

\begin{proof}
	We have the following sequence of inequalities:
	\begin{align*}
		\dist(x, \cB^c(\bm{0}; 3 \norm{\sol})) & =
		\norm{x - \proj_{\cB^c(\bm{0}; 3 \norm{\sol})}(x)}               \\
		                                       & =
		\norm{x - \sol + \sol - \proj_{\cB^c(\bm{0}; 3 \norm{\sol})}(x)} \\
		                                       & \geq
		\norm{\sol - \proj_{\cB^c(\bm{0}; 3 \norm{\sol})}(x)}
		- \norm{x - \sol}                                                \\
		                                       & \geq
		\norm{\sol - \proj_{\cB^c(\bm{0}; 3 \norm{\sol})}(\sol)}
		- \norm{x - \sol}                                                \\
		                                       & >
		\norm{\sol - \proj_{\cB^c(\bm{0}; 3 \norm{\sol})}(\sol)}
		- \norm{\sol}                                                    \\
		                                       & \geq
		2 \norm{\sol} - \norm{\sol}                                      \\
		                                       & >
		\norm{x - \sol},
	\end{align*}
	which follow from $\norm{x - \proj_{C}(y)} \geq \norm{x - \proj_{C}(x)}$, our
	assumptions on $\norm{x - \sol}$, and
	\[
		\norm{\sol - \proj_{\cB^c(0; 3 \norm{\sol})}(\sol)}
		\geq
		\norm{\proj_{\cB^c(0; 3 \norm{\sol})}(\sol)} - \norm{\sol} \geq
		3 \norm{\sol} - \norm{\sol} = 2 \norm{\sol}.
	\]
	This shows that $\norm{x - \sol} < \dist(x, \cB^c(\bm{0}; 3 \norm{\sol}))$. At the same time,
	\begin{align*}
		(1 - \delta) \norm{\sol}^2       & \geq \norm{x - \sol}^2                        \\
		                                 & = \norm{x}^2 + \norm{\sol}^2 - 2 \ip{x, \sol} \\
		                                 & \geq \norm{\sol}^2 - 2 \ip{x, \sol}           \\
		\Leftrightarrow
		\ip{x, \frac{\sol}{\norm{\sol}}} & \geq \frac{\delta}{2} \norm{\sol}.
	\end{align*}
	From the previous display and our assumption $\norm{x - \sol} \leq \sqrt{1 - \delta} \norm{\sol}$, we deduce
	\begin{align*}
		\norm{x} & = \sup_{u \in \Sbb^{d-1}} \ip{x, u}                                          \\
		         & \geq \ip{x, \frac{\sol}{\norm{\sol}}}                                        \\
		         & \geq \frac{\delta}{2} \norm{\sol}                                            \\
		         & = \frac{\delta}{2} \frac{\norm{\sol}}{\norm{x - \sol}} \cdot \norm{x - \sol} \\
		         & \geq \frac{\delta}{2 \sqrt{1 - \delta}} \norm{x - \sol},
	\end{align*}
	which completes the proof of the claim.
\end{proof}

\section{Projecting to the TV norm ball}
\label{appendix:sec:tv-norm-ball-projection}
In this section, we describe an iterative approach based on
the Douglas-Rachford splitting method \citep{DR56} to compute
the orthogonal projection onto the TV ball. First, we rewrite
\begin{align*}
	\proj_{\cX}(y) & = \argmin_{x} \set{\frac{1}{2} \norm{x - y}^2 \mid \tvnorm{\mat(x)} \leq \lambda} \\
	               & = \argmin_{x}
	\set{\frac{1}{2} \norm{x - y}^2 \mid \norm{z}_{n^2, 2} \leq \lambda, \ J_{\mathsf{TV}}(x) = z}     \\
	               & = \argmin_{x, z}
	\set{\frac{1}{2} \norm{x - y}^2 + \delta_{\set{\norm{\cdot}_{n^2, 2} \leq \lambda}}(z)
	\mid J_{\mathsf{TV}}(x) = z}                                                                       \\
	               & =
	\argmin_{x, z}
	\set{
		\frac{1}{2} \norm{x - z}^2 + \delta_{(\Rbb^{n^2}) \times \set{\norm{\cdot}_{n^2, 2} \leq \lambda}}(x, z)
		+ \delta_{\gph(J_{\mathsf{TV}})}(x, z)
	}
\end{align*}
where $\delta_{\cC}(x) = 0$ for $x \in \cC$ and $+\infty$ otherwise. The operator $J_{\mathsf{TV}}: \Rbb^{n^2} \to \Rbb^{2n^2}$
is given by
\begin{subequations}
	\begin{align}
		[J_{\mathsf{TV}}(x)]_{(i - 1) \cdot n + j}       & = \mathbf{M}(x)_{i+1, j} - \mathbf{M}(x)_{i, j}, \quad i, j = 1, \dots, n-1
		\label{eq:dtv-def-1}                                                                                                           \\
		[J_{\mathsf{TV}}(x)]_{n^2 + (i - 1) \cdot n + j} & =
		\mathbf{M}(x)_{i, j+1} - \mathbf{M}(x)_{i, j}, \quad i, j = 1, \dots, n-1
		\label{eq:dtv-def-2}
	\end{align}
\end{subequations}
and $\norm{z}_{k, 2}: \Rbb^{2k} \to \Rbb^{k}$ is the following group norm:
\begin{equation}
	\norm{z}_{k, 2} =
	\sum_{i = 1}^{k} \norm{(Z_{i}, Z_{k + i})}.
	\label{eq:partial-norm}
\end{equation}
With this in hand, we can now write
\begin{subequations}
	\begin{align}
		\proj_{\cX}(y) & = \argmin_{x, z} \set{
			h_1(x, z) + h_2(x, z)
		},                                                                                                                                   \label{eq:pre-splitting} \\
		h_{1}(x, z)    & := \frac{1}{2} \norm{x - y}^2 + \delta_{\Rbb^{n^2} \times \set{\norm{\cdot}_{n^2, 2} \leq \lambda}}(x, z) \label{eq:pre-splitting-h1}        \\
		h_{2}(x, z)    & := \delta_{\gph(J_{\mathsf{TV}})}(x, z). \label{eq:pre-splitting-h2}
	\end{align}
\end{subequations}
The formulation in~\eqref{eq:pre-splitting} lends itself to various splitting
algorithms; our implementation uses the Douglas-Rachford splitting method. The
latter maintains an auxiliary sequence $(\bar{x}, \bar{z})$, as well as the
primal iterates $(x, z)$, and alternates between the following steps:
\begin{align*}
	(x_{k+1}, z_{k+1})             & := \prox_{h_1}(\bar{x}_k, \bar{z}_k);                                                                              \\
	(\bar{x}_{k+1}, \bar{z}_{k+1}) & := (\bar{x}_k, \bar{z}_k) + \prox_{h_2}(2 \cdot (x_{k+1}, z_{k+1}) - (\bar{x}_k, \bar{z}_k)) - (x_{k+1}, z_{k+1}).
\end{align*}
We now, we provide low-complexity formulas for the proximal operators
involved when using the Douglas-Rachford splitting method to compute a projection
onto the norm ball $\tvnorm{\mat(x)} \leq \lambda$. Throughout, we fix
\begin{subequations}
	\begin{align}
		h_{1}: \Rbb^{n^2 \times {2n^2}} \to \Rbb, \quad & h_{1}(x, z) :=
		\frac{1}{2} \norm{x - y}^2 + \delta_{\Rbb^{n^2} \times \set{\norm{\cdot}_{n^2, 2} \leq \lambda}}(x, z), \\
		h_2: \Rbb^{n^2 \times 2n^2} \to \Rbb, \quad     & h_{2}(x, z) :=
		\delta_{\gph(J_{\mathsf{TV}})}(x, z),
	\end{align}
\end{subequations}
where $\norm{z}_{k, 2}: \Rbb^{k} \to \Rbb$ is the following group norm:
\[
	\norm{z}_{k, 2} := \sum_{i = 1}^{k} \norm{\pmx{z_{i} & z_{k + i}}}.
\]

\begin{lemma}
	We have the following formulas:
	\begin{subequations}
		\begin{align}
			\prox_{h_1}(x, z) & = \begin{bmatrix}
				                      \frac{x + y}{2} \\
				                      \proj_{\norm{\cdot}_{n^2, 2} \leq \lambda}(z)
			                      \end{bmatrix}, \label{eq:prox-h1} \\
			\prox_{h_2}(x, z) & = \proj_{\gph(J_{\mathsf{TV}})}(x, z).
			\label{eq:prox-h2}
		\end{align}
	\end{subequations}
\end{lemma}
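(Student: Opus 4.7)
The plan is to verify each formula by direct calculation from the definition of the proximal operator,
\[
\prox_{h}(v) := \argmin_{u} \set{h(u) + \tfrac{1}{2}\norm{u - v}^2},
\]
and to exploit the fact that $h_1$ separates cleanly over the two blocks $(x, z)$, while $h_2$ is the indicator of a closed set, whose prox is by definition a metric projection.

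For $\prox_{h_1}(x, z)$, I would first observe that the indicator $\delta_{\Rbb^{n^2} \times \set{\norm{\cdot}_{n^2,2} \leq \lambda}}$ decomposes as a sum of an $x$-only indicator (which is identically zero, since the first factor is all of $\Rbb^{n^2}$) and a $z$-only indicator. Combined with the fact that $\frac{1}{2}\norm{\cdot - y}^2$ depends only on $x$, this means $h_1(u, w) = g_1(u) + g_2(w)$ with $g_1(u) = \frac{1}{2}\norm{u - y}^2$ and $g_2(w) = \delta_{\set{\norm{\cdot}_{n^2,2} \leq \lambda}}(w)$. By separability of the proximal map, $\prox_{h_1}(x, z) = (\prox_{g_1}(x),\, \prox_{g_2}(z))$. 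For the first block, first-order optimality in the strongly convex problem $\argmin_u \frac{1}{2}\norm{u - y}^2 + \frac{1}{2}\norm{u - x}^2$ gives $(u - y) + (u - x) = 0$, i.e., $u = \tfrac{x+y}{2}$. For the second block, $\prox_{g_2} = \prox_{\delta_{\set{\norm{\cdot}_{n^2,2} \leq \lambda}}}$ is, by definition, the Euclidean projection onto the sublevel set $\set{\norm{\cdot}_{n^2,2} \leq \lambda}$, which is exactly $\proj_{\norm{\cdot}_{n^2,2}\leq\lambda}(z)$. Assembling the two blocks yields~\eqref{eq:prox-h1}.

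For $\prox_{h_2}(x, z)$, the calculation is immediate: since $h_2 = \delta_{\gph(J_{\mathsf{TV}})}$, we have
\[
\prox_{h_2}(x, z) = \argmin_{(u,w) \in \gph(J_{\mathsf{TV}})} \tfrac{1}{2}\bignorm{(u,w) - (x,z)}^2,
\]
which is by definition the Euclidean projection onto the set $\gph(J_{\mathsf{TV}}) \subset \Rbb^{n^2} \times \Rbb^{2n^2}$, giving~\eqref{eq:prox-h2}. (The projection is well defined since $\gph(J_{\mathsf{TV}})$ is a closed linear subspace, being the graph of a linear operator.)

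There is no substantive obstacle here: the result is a consequence of two standard facts from proximal calculus, namely (i) separability of $\prox$ across independent coordinate blocks when the function is a sum of block-separable terms, and (ii) the identification $\prox_{\delta_C} = \proj_C$ for any closed set $C$. The only minor care is in checking that the indicator in $h_1$ really does decouple across $x$ and $z$ and that the quadratic does not couple them, both of which are immediate from the definitions in~\eqref{eq:pre-splitting-h1}--\eqref{eq:pre-splitting-h2}.
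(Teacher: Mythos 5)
Your proof is correct and follows essentially the same route as the paper: both arguments reduce to observing that $h_1$ decouples across the $(x,z)$ blocks --- so the quadratic in $x$ gives the midpoint $\frac{x+y}{2}$ and the indicator in $z$ gives the projection onto the $\norm{\cdot}_{n^2,2}$-ball --- and that $\prox_{h_2}=\proj_{\gph(J_{\mathsf{TV}})}$ since $h_2$ is an indicator. The paper merely phrases the first computation via the subdifferential optimality condition and the product rule for normal cones rather than via separability of the prox, which is a cosmetic difference.
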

\begin{proof}
	We start from the following characterization:
	\[
		(x_+, z_+) = \prox_{h_1}(x, z) \Leftrightarrow
		(x - x_+, z - z_+) \in \partial h_1(x_+, z_+).
	\]
	To compute $\partial h_{1}$, we use the following properties (see, e.g.,~\cite{Rusz06}):
	\begin{itemize}
		\item $\partial \delta_{\cC}(\bar{x}) = \cN_{\cC}(\bar{x})$, where $\cN_{\cC}(\bar{x})$ is the normal cone of $\cC$ at $\bar{x}$;
		\item $\cN_{\cC_1 \times \cC_2}(\bar{x}, \bar{z}) = \cN_{\cC_1}(\bar{x}) \times \cN_{\cC_2}(\bar{z})$ for any convex $\cC_1, \cC_2$.
	\end{itemize}
	Indeed, from the above it follows that
	\begin{align*}
		\partial h_{1}(x_+, z_+) & = (x_+ - y, \bm{0}) + \cN_{\Rbb^{n^2} \times \set{\norm{\cdot}_{n^2, 2} \leq \lambda}}(x_+, z_+)       \\
		                         & = (x_+ - y, \bm{0}) + \cN_{\Rbb^{n^2}}(x_+) \times \cN_{\set{\norm{\cdot}_{n^2, 2} \leq \lambda}}(z_+) \\
		                         & = (x_+ - y, \bm{0}) + \set{\bm{0}_{n^2}} \times \cN_{\set{\norm{\cdot}_{n^2, 2} \leq \lambda}}(z_+)
	\end{align*}
	In particular, since this system is separable, we obtain
	\begin{align*}
		x - x_+ & = (x_+ - y) \implies x_+ = \frac{x + y}{2},                      \\
		z - z_+ & \in \cN_{\set{\norm{\cdot}_{n^2, 2} \leq \lambda}}(z_+) \implies
		z_+ = \proj_{\set{\norm{\cdot}_{n^2, 2} \leq \lambda}}(z).
	\end{align*}
	We defer the derivation of a formula for the projection
	to~\cref{lemma:proj-tvnorm-ball}. Finally, we note
	\begin{align*}
		(x_+, z_+)         = \prox_{h_2}(x, z)
		\Leftrightarrow
		(x_+, z_+)         = \proj_{\gph(J_{\mathsf{TV}})}(x, z).
	\end{align*}
	Since $J_{\mathsf{TV}}$ is a very sparse linear operator, the graph
	projection can be computed very efficiently using a cached sparse Cholesky or
	a sparse \texttt{LDLt} factorization, allowing for repeated linear system
	solves. We defer the details to~\cref{lemma:proj-Jtv-graph}.
\end{proof}

\begin{lemma}
	\label{lemma:proj-tvnorm-ball}
	Let $M \in \Rbb^{m \times n}$ and write $\norm{M}_{1, 2} = \sum_{i = 1}^m \norm{M_{i, :}}$.
	Then we have that
	\[
		\proj_{\set{M \mid \norm{M}_{1, 2} \leq \lambda}}(X) =
		\prox_{\mu_{\star} \norm{M}_{1, 2}}(X),
	\]
	where $\mu_{\star}$ is the unique root of the one-dimensional equation
	\[
		\bignorm{\prox_{\mu \norm{M}_{1, 2}}(X)}_{1,2} - \lambda = 0.
	\]
	In particular, $\mu_{\star}$ can be found in $O(m \log m + mn)$ time.
\end{lemma}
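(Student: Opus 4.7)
The plan is to recognize this as a standard Lagrangian-duality/KKT result relating projection onto a norm ball to the proximal operator of the norm itself, and then to identify the proximal operator of $\|\cdot\|_{1,2}$ as row-wise group soft-thresholding so that the root-finding step reduces to a well-known one-dimensional problem.

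First I would set up the constrained optimization
\[
\proj_{\set{M \mid \norm{M}_{1,2} \leq \lambda}}(X) = \argmin_Y\set{\tfrac{1}{2}\norm{Y - X}^2 \mid \norm{Y}_{1,2} \leq \lambda}.
\]
If $\norm{X}_{1,2} \leq \lambda$, the optimum is $Y = X$, corresponding to $\mu_\star = 0$, and the claim is immediate. Otherwise, the constraint qualifications are trivially satisfied (Slater's condition holds for any $\lambda > 0$ since $\bm{0}$ is strictly feasible), so by KKT there exists a multiplier $\mu_\star > 0$ such that the optimum $Y_\star$ solves the unconstrained Lagrangian
\[
Y_\star = \argmin_Y\set{\tfrac{1}{2}\norm{Y - X}^2 + \mu_\star \norm{Y}_{1,2}} = \prox_{\mu_\star \norm{\cdot}_{1,2}}(X),
\]
and complementary slackness yields $\norm{Y_\star}_{1,2} = \lambda$, which is exactly the one-dimensional equation in the statement.

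Next I would establish uniqueness of the root $\mu_\star$. For this I would derive the explicit formula for the proximal operator of $\norm{\cdot}_{1,2}$, which separates across rows: since $\norm{M}_{1,2} = \sum_{i} \norm{M_{i,:}}$, the prox acts row-wise as the block soft-thresholding operator
\[
[\prox_{\mu \norm{\cdot}_{1,2}}(X)]_{i,:} = \max\set{0, 1 - \tfrac{\mu}{\norm{X_{i,:}}}} \cdot X_{i,:}.
\]
Plugging this in, the function $g(\mu) := \norm{\prox_{\mu \norm{\cdot}_{1,2}}(X)}_{1,2} = \sum_{i=1}^m \max\set{0, \norm{X_{i,:}} - \mu}$ is continuous, piecewise linear, non-increasing, strictly decreasing on $[0, \max_i \norm{X_{i,:}}]$, with $g(0) = \norm{X}_{1,2} > \lambda$ and $g(\max_i \norm{X_{i,:}}) = 0 < \lambda$, so the root $\mu_\star$ is unique.

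For the computational claim, I would note that once the row norms $r_i := \norm{X_{i,:}}$ are computed in $O(mn)$ time, finding $\mu_\star$ amounts to solving $\sum_{i=1}^m (r_i - \mu)_+ = \lambda$. This is the classical problem of projection onto the simplex (or finding a threshold in a sorted list): sort $r_{(1)} \geq r_{(2)} \geq \dots \geq r_{(m)}$ in $O(m \log m)$ time, find the largest index $k$ such that $r_{(k)} > \tfrac{1}{k}\bigl(\sum_{j \leq k} r_{(j)} - \lambda\bigr)$, and set $\mu_\star = \tfrac{1}{k}\bigl(\sum_{j \leq k} r_{(j)} - \lambda\bigr)$. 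The total cost is $O(mn + m \log m)$, as claimed. The only subtle step is verifying that the piecewise-linear root-finding on the sorted sequence produces the correct active set, which is standard in simplex projection (e.g., Duchi et al.\ 2008); I do not expect any new obstacle beyond bookkeeping there.
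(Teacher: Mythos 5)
Your proposal is correct and follows essentially the same route as the paper: characterize the projection via the optimality/KKT conditions for the norm-ball constraint (the paper phrases this through the normal cone $\cN_{\set{\norm{\cdot}_{1,2}\leq\lambda}} = \Rbb_+\,\partial\norm{\cdot}_{1,2}$, which is the same duality fact as your Lagrangian argument), reduce to the row-wise block soft-thresholding formula so that the root-finding equation becomes $\sum_i [\norm{X_{i,:}} - \mu]_+ = \lambda$, and solve it with the sort-and-threshold procedure of Duchi et al.\ in $O(mn + m\log m)$ time. Your explicit treatment of the trivial case $\norm{X}_{1,2}\leq\lambda$ and of the monotonicity/uniqueness of the piecewise-linear equation is a small addition the paper leaves implicit, but the substance of the argument is the same.
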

\begin{proof}
	Consider the optimality condition for the projection problem:
	\begin{align*}
		X_{+} = \proj_{\set{U \mid \norm{U}_{1, 2} \leq \lambda}}(X) & \Leftrightarrow
		X - X_{+} \in \cN_{\set{U \mid \norm{U}_{1, 2} \leq \lambda}}(X_{+})
		= \Rbb_{+} (\partial \norm{\cdot}_{1, 2})(X_{+})                               \\
		                                                             & \Leftrightarrow
		\exists \mu > 0: \ (X - X_{+}) \in \mu (\partial \norm{\cdot}_{1, 2})(X_{+}), \;\;
		\norm{X_{+}}_{1, 2} = \lambda                                                  \\
		                                                             & \Leftrightarrow
		\exists \mu > 0: \ \lambda = \bignorm{\prox_{\mu \norm{\cdot}_{1, 2}}(X)}_{1, 2},
	\end{align*}
	using the property that $\cN_{\set{X \mid \norm{X} \leq \lambda}} = \Rbb_{+} (\partial \norm{\cdot})$
	for any norm $\norm{\cdot}$. Examining the resulting one-dimensional equation,
	we note that~\cref{lemma:prox-tvnorm-surrogate} implies that
	\begin{equation}
		\bignorm{\prox_{\mu \norm{\cdot}_{1, 2}}(X)}_{1, 2} = \lambda \Leftrightarrow
		\sum_{i = 1}^m \left[ \norm{X_{i, :}} - \mu \right]_+ = \lambda.
		\label{eq:pwl-equations}
	\end{equation}
	Finally, we can use the algorithm proposed by~\cite{DSSC08} to
	solve for $\mu$ in~\eqref{eq:pwl-equations}. The algorithm
	involves a precomputation step for all row norms, achievable
	in time $O(mn)$, and a sorting step that takes time $O(m \log m)$.
	This completes the proof of the claim.
\end{proof}

\begin{lemma}
	\label{lemma:prox-tvnorm-surrogate}
	Let $M \in \Rbb^{m \times n}$ and write $\norm{M}_{1, 2} = \sum_{i = 1}^m \norm{M_{i, :}}$.
	Then for any $\mu > 0$,
	\begin{align}
		\prox_{\mu \norm{\cdot}_{1, 2}}(X)                  & =
		\begin{bmatrix}
			X_{1, :} \left(1 - \frac{\mu}{\max(\norm{X_{1, :}}, \mu)}\right) &
			\dots                                                            &
			X_{m, :} \left(1 - \frac{\mu}{\max(\norm{X_{m, :}}, \mu)}\right)
		\end{bmatrix}^{\T}
		\label{eq:prox-munorm-12}                               \\
		\bignorm{\prox_{\mu \norm{\cdot}_{1, 2}}(X)}_{1, 2} & =
		\sum_{i = 1}^m [\norm{X_{i, :}} - \mu]_+.
		\label{eq:prox-munorm-12-norm}
	\end{align}
\end{lemma}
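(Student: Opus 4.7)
The plan is to exploit the fact that $\norm{M}_{1,2} = \sum_{i=1}^m \norm{M_{i,:}}$ is separable across the rows of $M$. Since the squared Frobenius norm $\frac{1}{2} \norm{Y - X}_{\mathsf{F}}^2 = \frac{1}{2} \sum_i \norm{Y_{i,:} - X_{i,:}}^2$ also decomposes row-wise, the proximal problem
\[
\prox_{\mu \norm{\cdot}_{1,2}}(X) = \argmin_{Y} \set{\frac{1}{2} \frobnorm{Y - X}^2 + \mu \sum_{i=1}^m \norm{Y_{i,:}}}
\]
decouples into $m$ independent proximal subproblems, one per row. Thus it suffices to compute $\prox_{\mu \norm{\cdot}}(x) = \argmin_{y \in \Rbb^n} \{\frac{1}{2} \norm{y - x}^2 + \mu \norm{y}\}$ for a single vector $x \in \Rbb^n$ and the Euclidean norm.

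For the scalar-vector proximal problem, I would invoke the first-order optimality condition $x - y \in \mu \partial \norm{\cdot}(y)$ and split on whether $y = 0$. If $y \neq 0$, then $\partial \norm{\cdot}(y) = \{y/\norm{y}\}$, giving $x = (1 + \mu/\norm{y}) y$; this forces $y$ to be a positive multiple of $x$, and taking norms yields $\norm{y} = \norm{x} - \mu$ (valid only when $\norm{x} > \mu$), whence $y = (1 - \mu/\norm{x}) x$. If $y = 0$, then $\partial \norm{\cdot}(0) = \cB(\bm{0}; 1)$, so the condition reduces to $\norm{x} \leq \mu$. The two cases can be written uniformly as $y = (1 - \mu/\max(\norm{x}, \mu)) x$, which establishes~\eqref{eq:prox-munorm-12} after reassembling the rows.

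For~\eqref{eq:prox-munorm-12-norm}, I would plug the row-wise formula back into the definition of $\norm{\cdot}_{1,2}$:
\[
\bignorm{\prox_{\mu \norm{\cdot}_{1,2}}(X)}_{1,2}
= \sum_{i=1}^m \norm{X_{i,:}} \cdot \abs[\bigg]{1 - \frac{\mu}{\max(\norm{X_{i,:}}, \mu)}}.
\]
When $\norm{X_{i,:}} > \mu$ the $i$-th summand is $\norm{X_{i,:}} - \mu$; when $\norm{X_{i,:}} \leq \mu$ it is $0$. Both cases are captured by $[\norm{X_{i,:}} - \mu]_+$, giving the stated identity.

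There is no real obstacle here: the argument is essentially the textbook derivation of block soft-thresholding, and the only mild care needed is in handling the nonsmoothness at $y = 0$ via the subdifferential characterization instead of attempting to differentiate naively.
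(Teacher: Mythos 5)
Your proof is correct and follows essentially the same route as the paper: exploit row-separability of $\norm{\cdot}_{1,2}$ and the Frobenius norm to reduce to the Euclidean-norm prox per row, then handle the two cases $\norm{X_{i,:}} \lessgtr \mu$ for the norm identity. The only difference is that you derive the single-row block soft-thresholding formula from the subdifferential optimality condition, whereas the paper simply cites it as a standard result; both are fine.
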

\begin{proof}
	Note that $\norm{M}_{1, 2}$ is separable across rows. We write
	\begin{align*}
		\prox_{\mu \norm{\cdot}_{1, 2}}(M) & =
		\argmin_{X} \set{
			\frac{1}{2 \mu} \frobnorm{X - M}^2
			+ \norm{X}_{1, 2}
		}                                      \\
		                                   & =
		\argmin_{\set{X_{i, :}}_{i=1}^m}
		\sum_{i = 1}^m
		\set{
			\frac{1}{2 \mu} \norm{X_{i, :} - M_{i, :}}^2
			+ \norm{M_{i, :}}
		}                                      \\
		                                   & =
		\prox_{\mu \norm{\cdot}}(M_{1, :}) \times \dots \times
		\prox_{\mu \norm{\cdot}}(M_{m, :}).
	\end{align*}
	From standard results~\cite[Example 6.19]{Beck17}, we know that
	\[
		\prox_{\mu \norm{\cdot}}(M_{i, :}) = M_{i, :} \cdot \left(
		1 - \frac{\mu}{\max(\norm{M_{i, :}}, \mu)}
		\right).
	\]
	This yields the expression in~\eqref{eq:prox-munorm-12}. For~\eqref{eq:prox-munorm-12-norm},
	we first note that
	\[
		\norm{X_{i, :}} \left( 1 - \frac{\mu}{\max(\norm{X_{i, :}}, \mu)} \right) =
		\begin{cases}
			0,                     & \norm{X_{i, :}} < \mu, \\
			\norm{X_{i, :}} - \mu, & \text{otherwise}.
		\end{cases}
	\]
	Therefore, the sum of individual norms yields precisely the right-hand side of~\eqref{eq:prox-munorm-12-norm}.
\end{proof}

\begin{lemma}
	\label{lemma:proj-Jtv-graph}
	Let $J_{\mathsf{TV}}$ be defined in~\eqref{eq:dtv-def-1}-\eqref{eq:dtv-def-2}. Then
	\[
		\proj_{\gph(J_{\mathsf{TV}})}(x, z) = \begin{bmatrix}
			I_{n^2} & J_{\mathsf{TV}}^{\T} \\ J_{\mathsf{TV}} & -I_{2n^2}
		\end{bmatrix}^{-1}
		\begin{bmatrix}
			x + J_{\mathsf{TV}}^{\T} z \\
			\bm{0}_{2n^2}
		\end{bmatrix}.
	\]
	Moreover, it can be computed efficiently given a cached
	\texttt{LDLt} or Cholesky factorization.
\end{lemma}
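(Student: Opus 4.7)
The plan is to directly characterize the projection onto the graph of $J_{\mathsf{TV}}$ via the orthogonality condition defining Euclidean projection onto a linear subspace, then read off the claimed formula from the resulting $2 \times 2$ block system. Writing $\proj_{\gph(J_{\mathsf{TV}})}(x, z) = (\bar{x}, \bar{z})$, the projection is pinned down by two conditions: feasibility, $\bar{z} = J_{\mathsf{TV}}\bar{x}$, and orthogonality of the residual $(\bar{x} - x, \bar{z} - z)$ to the tangent subspace $\set{(u, J_{\mathsf{TV}}u) \mid u \in \Rbb^{n^2}}$, which translates to $\bar{x} - x + J_{\mathsf{TV}}^{\T}(\bar{z} - z) = \bm{0}$, i.e., $\bar{x} + J_{\mathsf{TV}}^{\T}\bar{z} = x + J_{\mathsf{TV}}^{\T}z$.

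Stacking these two conditions yields the symmetric saddle-point system
\[
\begin{bmatrix} I_{n^2} & J_{\mathsf{TV}}^{\T} \\ J_{\mathsf{TV}} & -I_{2n^2} \end{bmatrix} \begin{bmatrix} \bar{x} \\ \bar{z} \end{bmatrix} = \begin{bmatrix} x + J_{\mathsf{TV}}^{\T}z \\ \bm{0}_{2n^2} \end{bmatrix},
\]
whose coefficient matrix is exactly the $M$ appearing in the statement. To confirm invertibility, I would compute the Schur complement of the $-I_{2n^2}$ block and observe that it equals $I_{n^2} + J_{\mathsf{TV}}^{\T} J_{\mathsf{TV}}$, which is positive definite; this both establishes that $M^{-1}$ is well-defined and yields the equivalent closed form $\bar{x} = (I + J_{\mathsf{TV}}^{\T} J_{\mathsf{TV}})^{-1}(x + J_{\mathsf{TV}}^{\T} z)$ with $\bar{z} = J_{\mathsf{TV}} \bar{x}$. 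Rewriting as $(\bar{x}, \bar{z}) = M^{-1}(x + J_{\mathsf{TV}}^{\T} z, \bm{0}_{2n^2})$ delivers the stated formula.

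For the efficiency claim, I would note that $M$ depends only on the image size $n$, not on the inputs $(x, z)$ or on the current Douglas--Rachford iterates. Because $J_{\mathsf{TV}}$ is a finite-difference stencil with at most two nonzeros per row, $M$ is sparse and symmetric indefinite, so a single sparse $\texttt{LDLt}$ factorization computed once and cached reduces every subsequent projection to a pair of triangular solves. This amortizes the factorization cost across all outer Douglas--Rachford iterations and across every call to $\proj_{\cX}$ invoked by the projected subgradient method in~\eqref{eq:proj-gradient}.

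There is no substantive obstacle: the argument is a direct application of the orthogonality characterization of Euclidean projection onto a linear subspace. The only mild subtlety is recognizing that the stated matrix $M$ is invertible despite being indefinite, which follows immediately from its positive-definite Schur complement; no probabilistic or analytic machinery is required.
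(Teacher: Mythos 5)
Your proposal is correct and follows essentially the same route as the paper: the same saddle-point system with the same Schur-complement reduction to $I + J_{\mathsf{TV}}^{\T} J_{\mathsf{TV}}$ and the same cached sparse \texttt{LDLt}/Cholesky argument. The only difference is that you derive the graph-projection characterization directly from feasibility plus orthogonality of the residual, whereas the paper simply cites this identity from the literature (Parikh--Boyd), so your write-up is a slightly more self-contained version of the same proof.
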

\begin{proof}
	For an arbitrary linear operator $A$, we have~\citep{PB14}:
	\begin{align*}
		(x, y) = \proj_{\gph(A)}(\bar{x}, \bar{y}) \Leftrightarrow
		\begin{bmatrix}
			I & A^{\T} \\
			A & -I
		\end{bmatrix}
		\begin{bmatrix} x \\ y \end{bmatrix}
		 & =
		\begin{bmatrix}
			\bar{x} + A^{\T} \bar{y} \\
			\bm{0}
		\end{bmatrix}.
	\end{align*}
	When $A = J_{\mathsf{TV}}$ and viewing $J_{\mathsf{TV}}$
	as a linear operator from $\Rbb^{n^2} \to \Rbb^{2n^2}$, we have
	\begin{align*}
		\mathsf{nnz}\left(
		\begin{bmatrix}
				I_{n^2}         & J_{\mathsf{TV}}^{\T} \\
				J_{\mathsf{TV}} & -I_{2n^2}
			\end{bmatrix}
		\right) = O(n^2),
	\end{align*}
	which means that the matrix on the left-hand side can be factorized using
	an \texttt{LDLt} factorization. Alternatively, eliminating $y$ from the system
	yields the updates
	\begin{align*}
		x & = (I + J_{\mathsf{TV}}^{\T} J_{\mathsf{TV}})^{-1} (\bar{x} + J_{\mathsf{TV}}^{\T} \bar{y}),
		\quad y = J_{\mathsf{TV}}(x).
	\end{align*}
	In the above, we can compute a Cholesky factorization of $I + J_{\mathsf{TV}}^{\T} J_{\mathsf{TV}}$
	and solve the resulting linear systems with low computational effort.
\end{proof}

\section{An $f_{\star}$-adaptive variant of~\cref{alg:polyaksgm}}
\label{sec:appendix:polyaksgm-noopt}
We describe a variant of~\cref{alg:polyaksgm} for problems
with unknown optimal value $f_{\star}$, such as CT problems with noisy measurements,
due to~\citet{HK19}; see~\cref{alg:polyaksgm-noopt} below.

\begin{algorithm}
	\caption{\texttt{PolyakSGM-NoOpt}~\cite[Algorithm 2]{HK19}}
	\begin{algorithmic}[1]
		\State \textbf{Input}: $x_0 \in \Rbb^{d}$, lower bound $\widetilde{f}_0 \leq f_{\star}$,
		$\eta \in (0, 1)$, budgets $T_{\textsf{inner}}, T_{\textsf{outer}} \in \Nbb$.
		\For{$t = 1, \dots, T_{\textsf{outer}}$}
		\State Compute $\widehat{x}_{t} := \texttt{PolyakSGM}(x_0, \frac{\eta}{2}, T_{\textsf{inner}}, \widetilde{f}_{t-1})$
		\Comment{Alg.~\ref{alg:polyaksgm} with opt. value estimate $\widetilde{f}_{t-1}$}
		\State Update $\widetilde{f}_{t} := \frac{1}{2} \left( f(\widehat{x}_t) + \widetilde{f}_{t-1} \right)$
		\EndFor
		\State \Return $\argmin_{z} \set{f(z) \mid z \in \set{\widehat{x}_{i}}_{i = 1}^{T_{\mathsf{outer}}}}$.
	\end{algorithmic}
	\label{alg:polyaksgm-noopt}
\end{algorithm}

At a high-level,~\cref{alg:polyaksgm-noopt} maintains
an estimate $\widetilde{f}_{t}$ of the optimal value $f_{\star}$, which is updated using the
objective value found by successive calls to~\cref{alg:polyaksgm}.
In particular,~\citet{HK19} show that if $\widetilde{f}_{0} \leq f_{\star}$, running~\cref{alg:polyaksgm-noopt} is
equivalent to running~\cref{alg:polyaksgm} with
the ``correct'' value of $f_{\star}$, stepsize $\frac{\eta}{2}$, and budget $T_{\mathsf{inner}}$.
Their analysis is for the case where $f$ is convex (and thus $\eta = 1$ is optimal); while
it can be extended to our setting, under the restriction $\eta \leq \frac{1}{\condnum}$,
we do not pursue this generalization here.

\end{document}